\theoremstyle{plain}
\newtheorem{thm}{Theorem}[section]
\newtheorem{prop}[thm]{Proposition}
\newtheorem{remark}[thm]{Remark}
\newtheorem{cor}[thm]{Corollary}
\newtheorem{lemma}[thm]{Lemma}
\newtheorem{prob}[thm]{Problem}
\newtheorem{ex}[thm]{Example}
\theoremstyle{definition}
\newtheorem{defn}[thm]{Definition}
\newtheorem{rem}[thm]{Remark}
\numberwithin{equation}{section}
\renewcommand{\leq}{\leqslant}
\renewcommand{\geq}{\geqslant}
\begin{document}

\title[More on measurable algebras and Rademacher systems]{More on measurable algebras and Rademacher systems with applications to analysis of Riesz spaces}

\author[M.~Popov]{Mikhail Popov}

\address{Institute of Mathematics\\
Pomeranian University in S{\l}upsk\\
Arciszewskiego 22d\\
76-200 S{\l}upsk (Poland)}

\address{and Department of Mathematics and Informatics\\
Chernivtsi National University\\
str.~Kotsyubyns'koho 2, Chernivtsi, 58012 (Ukraine)}


\email{misham.popov@gmail.com}

\subjclass[2010]{Primary 28A60; secondary 46G12}

\keywords{Boolean algebra, measurable algebra, Rademacher system, Haar system, Riesz space, order Schauder basis}

\begin{abstract}
We find necessary and sufficient conditions on a family $\mathcal{R} = (r_i)_{i \in I}$ in a Boolean algebra $\mathcal{B}$ under which there exists a unique positive probability measure $\mu$ on $\mathcal{B}$ such that $\mu ( \bigcap_{k=1}^n \theta_k r_{i_k} ) = 2^{-n}$
for all finite collections of distinct indices $i_1, \ldots, i_n \in I$ and all collections of signs $\theta_1, \ldots, \theta_n \in \{-1,1\}$, where the product $\theta x$ of a sign $\theta$ by an element $x \in \mathcal{B}$ is defined by setting $1 x = x$ and $-1 x = - x = \mathbf{1} \setminus x$. Such a family we call a complete Rademacher family. We prove that Dedekind $\sigma$-complete Boolean algebras admitting complete Rademacher systems of the same cardinality are isomorphic. As a consequence, we obtain that a Dedekind $\sigma$-complete Boolean algebra is homogeneous measurable if and only if it admits a complete Rademacher family. This new way to define a measure on a Boolean algebra allows us to define classical systems on an arbitrary Riesz space, such as Rademacher and Haar. We define a complete Rademacher system of any cardinality and a countable complete Haar system on an element $e > 0$ of a vector lattice $E$ in such a way that if $e$ is an order unit of $E$ then the corresponding systems become complete for the entire $E$. We prove that if $E$ is Dedekind complete then any complete Haar system on $e$ is an order Schauder basis for the ideal $A_e$ generated by $e$. Finally, we develop a theory of integration in a Riesz space of elements of the band $B_e$ generated by a fixed $e > 0$ with respect to the measure on the Boolean algebra $\mathfrak{F}_e$ of fragments of $e$ generated by a complete Rademacher family on $\mathfrak{F}_e$. Much space is devoted to examples showing that our way of thinking is sharp (e.g., we show the essentiality of each of the condition in the definition of a Rademacher family).
\end{abstract}

\maketitle

\tableofcontents

\section{Introduction}
\label{sec1}

$\,\,\,\,\,\,$\textbf{The idea.} The investigation of the paper was partially inspirit by the following problem.

\begin{prob} \label{pr:main}
Let $\mathcal{F} = (f_n)_{n=1}^\infty$ be a sequence of real-valued functions defined on an abstract set $X$ (or, more general, a sequence in a Riesz space). Under what conditions, $\mathcal{F}$ can be considered as a sequence of independent random variables? As a Rademacher system? As a sequence of independent Gaussian variables? More precisely, under what conditions on $\mathcal{F}$ there exist a probability space $(\Omega, \Sigma, \mu)$, a Riesz space $F$ of functions $f: X \to \mathbb R$ containing $\mathcal{F}$ and a Riesz into isomorphism $T: F \to L_0(\mu)$ such that $(T f_n)_{n=1}^\infty$ is a sequence of independent random variables (or even with additional properties)?
\end{prob}

In particular, we answer this problem in its part concerning Rademacher systems.

The classical Rademacher system is an important tool in the investigation of the isomorphic structure of symmetric (rearrangement invariant) spaces \cite{As}, K\"{o}the function spaces on measure spaces \cite{PR}, as well as in probability theory. So, it would be natural to generalize a Rademacher system to the setting of Riesz spaces. However, the definition of a Rademacher type system uses a measure which is not generally defined if we consider an arbitrary Riesz space. Our main idea is to define a Rademacher type system on a general Riesz space without a measure. Moreover, a Rademacher system, which is defined axiomatically, generates a probability measure by an obvious way. Since any element of a Rademacher type system in a Riesz space $E$ has to be of the form $r = a - b$, where $a,b \in E^+$, $a \perp b$ and the element $e = |r| = a \sqcup b$ (by $a \sqcup b$ we denote a disjoint sum, that is the sum $a+b$ of disjoint elements $a \perp b = 0$) is some fixed element of $E$ considered as the ``support'' of the Rademacher system, to define a Rademacher system on $e$ means to find a sequence of two-element partitions of $e$. Observe that elements $a,b$ of any partition $e = a \sqcup b$ are fragments of $e$, that is, $a \perp (e - a)$ and the same with $b$ (see more preliminary information on Riesz spaces in Subsection~\ref{subsfragm}). It is well known that the set $\mathfrak{F}_e$ of all fragments of $e$ is a Boolean algebra with unity $e$. Thus, we deal with an arbitrary Boolean algebra $\mathcal{B}$ to define a Rademacher system, which becomes a sequence (or, more generally, a family) of two-element partitions of unity $\mathbf{1}$ of $\mathcal{B}$. For convenience of notation, instead of a sequence of partitions $\mathbf{1} = r_n \sqcup s_n$ we consider a sequence $(r_n)$ of representatives of each partition, no matter which ones. To distinguish Rademacher systems in Riesz spaces and Boolean algebras, the later ones we call Rademacher families.

We find conditions on a family $(r_i)_{i \in I}$ of elements of a Boolean algebra $\mathcal{B}$ under which one can consider it as a Rademacher family. Moreover, if a Rademacher family $(r_i)_{i \in I}$ in $\mathcal{B}$ has an additional property of completeness, there is a unique countably additive positive (that is, strictly positive at every nonzero element) measure on $\mathcal{B}$ possessing the equality
\begin{equation} \label{eq:main}
\mu \Bigl( \bigcap_{k=1}^n \theta_k r_{i_k} \Bigr) = \frac{1}{2^n}
\end{equation}
for all finite collections of distinct indices $i_1, \ldots, i_n \in I$ and all collections of signs $\theta_1, \ldots, \theta_n \in \{-1,1\}$, where the product $\theta x$ of a sign $\theta \in \{-1,1\}$ by an element $x \in \mathcal{B}$ is defined by setting $1 x = x$ and $-1 x = - x = \mathbf{1} \setminus x$.

Since a complete Rademacher family defines a positive measure on a Boolean algebra by \eqref{eq:main}, a Boolean algebra admitting a complete Rademacher family must be a measurable algebra. Moreover, the existence of a complete Rademacher family is a necessary and sufficient condition on a Boolean algebra to be measurable and Maharam homogeneous.

Such an approach to define a Rademacher system is then used to construct different classical systems in an arbitrary Riesz space, as well as to develop a theory of integration in a Riesz space $E$ of elements of the band $B_e$ generated by a fixed $0 < e \in E$ with respect to the measure on the Boolean algebra $\mathfrak{F}_e$ of fragments of $e$ generated by a complete Rademacher family on $\mathfrak{F}_e$.

$\,\,\,\,\,\,$\textbf{Organization of the paper.} Necessary information on Boolean algebras is put to the introduction, and preliminaries on Riesz spaces the reader can find in Section~\ref{secfive}.

In Section~\ref{sec:crs} we analyze properties of the usual Rademacher system as a family of two-point partitions of unity in a Boolean algebra to highlight the most important ones that may be put to the definition of an abstract Rademacher system. We outline the desired properties and show that none of them can be obtained from the other ones. Then we define a pre-Rademacher family as a system of elements which has one of the needed properties. More precisely, a system $\mathcal{R} = (r_i)_{i \in I}$ in a Boolean algebra $\mathcal{B}$ is pre-Rademacher if for any finite subset $J \subset I$ and any choice of signs $\theta_j = \pm 1$, $j \in J$ one has $\inf_{j \in J} \theta_j r_j > \mathbf{0}$. It is worthwhile to consider pre-Rademacher families because a number of results on Rademacher systems uses only this property in their proofs. The main results of the section (Theorem~\ref{th:eqcard}) characterizes atoms and atomless subalgebras in terms of pre-Rademacher systems. Theorem~\ref{thm:dyadicfam} asserts that a pre-Rademacher system $\mathcal{R} = (r_i)_{i \in I}$ in a Boolean algebra $\mathcal{B}$ uniquely determines a finitely additive measure $\mu$ satisfying \eqref{eq:main} on the minimal subalgebra $\mathcal{B}(\mathcal{R})$ containing this system. We call such a measure the dyadic measure on the subalgebra $\mathcal{B}(\mathcal{R})$ generated by $\mathcal{B}$. The last two results of Section~\ref{sec:crs} (Theorem~\ref{th:kjhgf5} and its generalization Theorem~\ref{th:kjhgf7}) show that maximal pre-Rademacher systems in all natural cases cannot be considered as independent random variables.

Section~\ref{sec:Radfam} aims to formalize the notion of a Rademacher system. The independence of random variables we interpret as minimality of a system, that is, any element of the system does not belong to the smallest order closed subalgebra generated by the rest of elements. The property of a system $\mathcal{R} = (r_i)_{i \in I}$ in a Boolean algebra $\mathcal{B}$ to be identically distributed we identify with the so-called vanishing at infinity property, that is, for any infinite subset $J \subseteq I$ and any choice of signs $\theta_j = \pm 1$, $j \in J$ one has $\inf_{j \in J} \theta_j r_j = \mathbf{0}$. Then by a Rademacher family we mean a minimal vanishing at infinity pre-Rademacher family. To emphasize the importance of the vanishing at infinity property of a Rademacher family, we provide examples showing the variety of distinct (non-isomorphic) types of countable minimal pre-Rademacher families without this property (see Proposition~\ref{ex:5eqcardex} and Theorem~\ref{thm:5eqcardex}). Another sense of the vanishing at infinity property is explained below in Theorem~\ref{th:ccvmn740}: this property gives positivity of the dyadic measure generated by a pre-Rademacher family. The main result of Section~\ref{sec:Radfam} (Theorem~\ref{th:ccvmn740}) asserts that, if $\mathcal{R} = (r_i)_{i \in I}$ is a minimal pre-Rademacher family in a Boolean algebra $\mathcal{B}$ then the dyadic finitely additive measure on $\mathcal{B}(\mathcal{R})$ is uniquely extended to a countable additive measure on the smallest order $\sigma$-closed subalgebra $\mathcal{B}_\sigma(\mathcal{R})$ which we also call the dyadic measure. If, moreover, $\mathcal{R}$ vanishes at infinity (in other words, is a Rademacher family) then the dyadic measure is positive. It ought be mentioned (see the remark between the statement and the proof of Theorem~\ref{th:ccvmn740}) that the dyadic measure is not countably additive on $\mathcal{B}(\mathcal{R})$ in the most natural cases (the restriction of a countably additive measure on a Boolean algebra to a subalgebra need not be countably additive; that is an essential difference in measure theory on Boolean algebras from the measure theory on measure spaces). We say that a Rademacher system is complete if it $\sigma$-generates the entire Boolean algebra. As a corollary of Theorem~\ref{th:ccvmn740} we obtain that a complete Rademacher system on a Boolean algebra $\mathcal{B}$ defines a unique dyadic countably additive probability measure on $\mathcal{B}$ which is positive (that is, $\mu(x) > 0$ for all $x \in \mathcal{B} \setminus \{\mathbf{0}\}$).

In Section~\ref{sec:charhma} we give a new characterization of homogeneous measurable Boolean algebras. First we prove that Dedekind $\sigma$-complete Boolean algebras admitting complete Rademacher systems of the same cardinality are isomorphic. Then, as a consequence of the above result and the Maharam theorem we obtain that a Dedekind $\sigma$-complete Boolean algebra is homogeneous measurable if and only if it contains a complete Rademacher family.

Section~\ref{secfive} is devoted to applications of the obtained results to Riesz spaces. We define a Rademacher system in a Riesz space $E$ (on an element $0 < e \in E$) in a natural and obvious way using a Rademacher family in the Boolean algebra $\mathfrak{F}_e$ of fragments of $e$. Theorem~\ref{th:ssssmdnd750} asserts that a complete Rademacher system of a cardinality $\aleph_\alpha$ on $e$ exists if and only if the ideal $A_e$ in $E$ generated by $e$ is Riesz isomorphic to $L_\infty(D^{\omega_\alpha}, \Sigma_{\omega_\alpha}, \mu_{\omega_\alpha})$. Then we define a Haar system with respect to a given countable Rademacher system in obvious way and prove that a complete Haar system on an element $e$ of a Dedekind complete Riesz space $E$ is an order Schauder basis of $A_e$. At the end of the section we pose the following problem. Let $E$ be a Riesz space, $0 < e \in E$ and let $(h_n)_{n=1}^\infty$ be a complete Haar system on $e$. Characterize those Riesz subspaces $X$ of $B_e$ such that $(h_n)_{n=1}^\infty$ is an order Schauder basis of $E$.

In Section~\ref{sec:int} we give another application to the theory of Riesz spaces. More precisely, we define an integral $\int_e x \, d \mu$ of an element $x$ over an element $e > 0$ of a Dedekind $\sigma$-complete Riesz space with respect to the dyadic measure $\mu$ generated by a complete Rademacher system $\mathcal{R}$ on $e$, and prove some usual properties of the integral. We also define Riesz subspaces $L_1(\mathcal{R})$ of $E$ in an obvious way which are ideals of $E$, need not be bands of $E$, are normed lattices with respect to the norm $\|x\| = \int_e |x| \, d \mu$ and need not be Banach lattices. For the most of the results of the section, the proofs are standard, and so we omit them or provide with a sketch.

In the last Section~\ref{prmainpartans}, we formulate a partial answer to Problem~\ref{pr:main} on the base of previous results.

$\,\,\,\,\,\,$\textbf{Acknowledgments.} We thank T.~Banakh, A.~Dorogovtsev and O.~Maslyuchenko for helpful discussions on the subject.

$\,\,\,\,\,\,$\textbf{Preliminaries.} \label{ss:dkdkj} Our terminology is standard, mainly as in \cite{F3} for Boolean algebras and as in \cite{ABu} for Riesz spaces. \emph{Zero} $\mathbf{0}$ and \emph{unit} $\mathbf{1}$ of a Boolean algebra $\mathcal{B}$ we write in bold to distinguish them from the corresponding numbers. The order $x \leq y$ on $\mathcal{B}$ is defined to be equivalent to the equality $x \cap y = x$, which in turn is equivalent to $x \cup y = y$. So, $x \cup y = \sup\{x,y\}$ and $x \cap y = \inf\{x,y\}$ with respect to this order. The relation $\subseteq$ is used for subsets and $\leq$ is used for elements of a Boolean algebra. The union and the intersection of an infinite subset $\mathcal{A} \subseteq \mathcal{B}$ is defined by $\bigcup \mathcal{A} = \sup \mathcal{A}$ and $\bigcap \mathcal{A} = \inf \mathcal{A}$ with respect to the order $\leq$ only if the corresponding supremum or infimum exists. By a \emph{subalgebra} of $\mathcal{B}$ we mean any subset of $\mathcal{B}$ containing $\mathbf{1}$ which is itself a Boolean algebra with the induced Boolean algebra structure. A subset $\mathcal{A}$ of a Boolean algebra $\mathcal{B}$ is said to be \emph{disjoint} provided $x \cap y = \mathbf{0}$ for all distinct $x,y \in \mathcal{A}$. By a \emph{partition} (of unity) in a Boolean algebra $\mathcal{B}$ we mean a maximal disjoint subset $\mathcal{A} \subseteq \mathcal{B}$, that is, $(\forall x \in \mathcal{B} ) \, \bigl(\bigl( \forall a \in \mathcal{A}  \,\, a \cap x = \mathbf{0} \bigr) \Rightarrow  (x = \mathbf{0}) \bigr).$ A \emph{disjoint union} $\bigcup \mathcal{A}$ (that is, the union of a disjoint system $\mathcal{A} \subseteq \mathcal{B}$), if exists, is denoted by $\bigsqcup \mathcal{A}$. Although in some cases an infinite union in a Boolean algebra does not exist, it is immediate that if $\mathcal{A}$ is a partition then $\bigsqcup \mathcal{A} = \mathbf{1}$ exists. Conversely, if $\bigsqcup \mathcal{A} = \mathbf{1}$ then $\mathcal{A}$ is a partition. A Boolean algebra $\mathcal{B}$ is said to have the \emph{countable chain condition} (ccc, in short) if any disjoint subset $\mathcal{A} \subseteq \mathcal{B}$ is, at most, countable. A Boolean algebra $\mathcal{B}$ is called \emph{measurable} if $\mathcal{B}$ is a Dedekind $\sigma$-complete Boolean algebra and there is a finite positive $\sigma$-additive measure on $\mathcal{B}$ (by a \emph{positive} measure we mean a strictly positive measure $\mu$, that is, $\mu(x) > 0$ for all $x \in \mathcal{B} \setminus \{\mathbf{0}\}$). Obviously, every measurable Boolean algebra has the ccc.

Another well known necessary condition on a Boolean algebra $\mathcal{B}$ to be measurable is the weak distributivity property, that is, for any sequence $\mathcal{A}_n$ of partitions there is a partition $\mathcal{A}$ such that every element of $\mathcal{A}$ for every $n$ meets only a finite number of elements of $\mathcal{A}_n$ (we say that $a$ meets $b$ provided $a \cap b \neq \mathbf{0}$). In \cite{Tal} Talagrand constructed a Boolean algebra with the CCC and the weak distributivity property failing to be measurable, answering Problem~163 by von Neumann from the Scottish Book. Very soon Jech \cite{Jech} gave nice characterizations of measurable Boolean algebras.

A Boolean algebra $\mathcal{B}$ is said to be Dedekind complete (respectively, $\sigma$-Dedekind complete) if every nonempty (respectively, every nonempty countable) subset of $\mathcal{B}$ has the supremum (equivalently, infimum).

For example, the Borel $\sigma$-algebra $\mathcal{B}$ on $[0,1]$ is not Dedekind complete. Moreover, $\mathcal{B}$ does not have the ccc and hence, is not measurable. However, the $\sigma$-algebra $\mathcal{\widehat{B}}$ of all equivalence classes of Borel subsets of $[0,1]$ with respect to the Lebesgue measure is measurable (the Lebesgue measure is positive on $\mathcal{\widehat{B}}$) and Dedekind complete. \textbf{Throughout the paper, we reserve the notation $\mathcal{\widehat{B}}$ for the above Boolean algebra, which will be frequently used in different contexts.} In fact, every Boolean Dedekind $\sigma$-complete Boolean algebra with the ccc is Dedekind complete.

A subalgebra $\mathcal{A}$ of a Boolean algebra $\mathcal{B}$ is said to be \emph{order closed} (respectively, $\sigma$-\emph{order closed}) if for every (respectively, countable) subset $C \subseteq \mathcal{A}$ the existence of $\sup C \in \mathcal{B}$ implies $\sup C \in \mathcal{A}$. Another equivalent definition contains an additional assumption on $C$ to be upwards directed (for this and other equivalences see \cite[313E]{F3}).

For any $\mathcal{A} \subseteq \mathcal{B}$ we denote
\begin{itemize}
  \item $\mathcal{B} (\mathcal{A})$ the smallest subalgebra of $\mathcal{B}$ including $\mathcal{A}$;
  \item $\mathcal{B}_\sigma (\mathcal{A})$ the smallest $\sigma$-order closed subalgebra of $\mathcal{B}$ including $\mathcal{A}$;
  \item $\mathcal{B}_\tau (\mathcal{A})$ the smallest order closed subalgebra of $\mathcal{B}$ including $\mathcal{A}$.
\end{itemize}

Anyway, $\mathcal{B} (\mathcal{A}) \subseteq \mathcal{B}_\sigma (\mathcal{A}) \subseteq \mathcal{B}_\tau (\mathcal{A})$, and if $\mathcal{B}$ possesses the ccc then $\mathcal{B}_\sigma (\mathcal{A}) = \mathcal{B}_\tau (\mathcal{A})$ for all subsets $\mathcal{A} \subseteq \mathcal{B}$ \cite[331G]{F3}.

Standard arguments show that, given a subalgebra $\mathcal{A}$ of a Boolean algebra $\mathcal{B}$, the subalgebra $\mathcal{B}_\tau (\mathcal{A})$ equals the order closure of $\mathcal{A}$ in $\mathcal{B}$ \cite[313F(c)]{F3}. Hence, $\mathcal{B}_\tau (\mathcal{A})$ equals the set of all order limits of nets from $\mathcal{A}$. Recall that an element $x$ of a Boolean algebra $\mathcal{B}$ is called the order limit of a net $(x_\alpha)$ in $\mathcal{B}$ if there exists a net $(u_\alpha)$ with the same index set in $\mathcal{B}$ such that $x_\alpha \triangle x \leq u_\alpha$ for all $\alpha$ and $u_\alpha \downarrow \mathbf{0}$ (the latter condition means that $(u_\alpha)$ is downwards directed and $\inf_\alpha u_\alpha = \mathbf{0}$). If $x$ is the order limit of a net $(x_\alpha)$ then we say that $(x_\alpha)$ order converges (to $x$) and write $x_\alpha \stackrel{\rm o}{\longrightarrow} x$. Analogously, $\mathcal{B}_\sigma (\mathcal{A})$ equals the set of all order limits of sequences from $\mathcal{A}$. The order convergence has the usual properties, including the order continuity of the operations $\cup$, $\cap$, $\triangle$, $\setminus$.

Given any $\mathcal{A} \subseteq \mathcal{C} \subseteq \mathcal{B}$, we say that
\begin{itemize}
  \item $\mathcal{A}$ $\sigma$-generates $\mathcal{C}$ if $\mathcal{C} = \mathcal{B}_\sigma (\mathcal{A})$;
  \item $\mathcal{A}$ $\tau$-generates $\mathcal{C}$ if $\mathcal{C} = \mathcal{B}_\tau (\mathcal{A})$.
\end{itemize}

We define the \emph{density} ${\rm dens} \, \mathcal{B}$ of a Boolean algebra $\mathcal{B}$ to be the smallest cardinality of subsets $\mathcal{A} \subseteq \mathcal{B}$ that $\tau$-generate $\mathcal{B}$. The \emph{density} ${\rm dens} \,e$ of a nonzero element $e \in \mathcal{B}$ is defined to be the density of the Boolean algebra $\mathcal{B}_e = \{x \in \mathcal{B}: \, x \leq e\}$ with operations induces by $\mathcal{B}$ and unit $e$. In particular, ${\rm dens} \, \mathbf{1} = {\rm dens} \, \mathcal{B}$. We say that a Boolean algebra $\mathcal{B}$ is \emph{homogeneous}\footnote{more precisely, \emph{Maharam homogeneous}} if for every $e \in \mathcal{B} \setminus \{0\}$ we have ${\rm dens} \, e = {\rm dens} \, \mathcal{B}$.

Set $D = \{-1,1\}$ and let $\omega_\alpha$ be an arbitrary infinite cardinal, $\mu_{\omega_\alpha}$ the Haar measure on the $\sigma$-algebra $\Sigma_{\omega_\alpha}$ of subsets of the power-set $D^{\omega_\alpha}$ considered as a compact abelian group, $\widehat{\Sigma}_{\omega_\alpha}$ the quotient Boolean algebra of $\Sigma_{\omega_\alpha}$ modulo $\mu_{\omega_\alpha}$-null sets. The quotient map from $\Sigma_{\omega_\alpha}$ to $\widehat{\Sigma}_{\omega_\alpha}$ we denote by ${\rm Co}$. By the \emph{standard Rademacher family} $(\overline{r}_\gamma)_{\gamma < \omega_\alpha}$ in $\widehat{\Sigma}_{\omega_\alpha}$ we mean the co-sets of the following sets: $\overline{r}_\gamma = {\rm Co} \, \bigl\{ x \in D^{\omega_\alpha}: \,\, x(\gamma) = 1 \bigr\}.$

The following results due to Maharam is very essential for our investigation (see \cite{Maharam} for the original paper, and \cite{F3}, \cite{Lacey}, \cite{PP} for different proofs).

\begin{thm}[First Maharam Theorem] \label{th:Mah1}
Any homogeneous measurable Boolean algebra $\mathcal{B}$ is isomorphic to $\widehat{\Sigma}_{\omega_\alpha}$ where $\omega_\alpha$ is the cardinal of cardinality $\aleph_\alpha = {\rm dens} \, \mathcal{B}$.
\end{thm}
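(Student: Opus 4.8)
The plan is to realise the isomorphism through a \emph{$\tau$-generating independent family of halving elements}. Concretely, I would show that a homogeneous measurable algebra $\mathcal{B}$ of density $\aleph_\alpha$ carries a family $(a_\xi)_{\xi < \omega_\alpha}$ with $\mu(a_\xi) = 1/2$, stochastically independent (so that $\mu(\bigcap_k \theta_k a_{\xi_k}) = 2^{-n}$ for distinct indices and arbitrary signs, as in \eqref{eq:main}) and with $\mathcal{B}_\tau\bigl(\{a_\xi : \xi < \omega_\alpha\}\bigr) = \mathcal{B}$. Once such a family is in hand, the assignment sending the standard Rademacher family $(\overline{r}_\xi)$ of $\widehat{\Sigma}_{\omega_\alpha}$ to $(a_\xi)$ extends first over the subalgebra generated by finite Boolean combinations (where both sides carry the same dyadic values $2^{-n}$, so the map is a measure isomorphism onto its image), and then, by order-continuity of the Boolean operations, to an isomorphism $\widehat{\Sigma}_{\omega_\alpha} \to \mathcal{B}$. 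Here I use that $\mathcal{B}$, being measurable, has the ccc, so that $\mathcal{B}_\sigma = \mathcal{B}_\tau$ and order limits of sequences suffice, and that homogeneity with $\aleph_\alpha \geq \aleph_0$ forces $\mathcal{B}$ to be atomless.

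The analytic heart is an \emph{independent-complement lemma}: if $\mathcal{C}$ is an order closed subalgebra of $\mathcal{B}$ with ${\rm dens}\,\mathcal{C} < \aleph_\alpha$, then there is $a \in \mathcal{B}$ with $\mu(a) = 1/2$ independent of $\mathcal{C}$, i.e. $\mu(a \cap c) = \tfrac12\,\mu(c)$ for all $c \in \mathcal{C}$. I would prove it through the conditional expectation $P$ onto $L_1(\mathcal{C})$: an element $a$ is independent of $\mathcal{C}$ exactly when $P\chi_a$ equals the constant $\mu(a)$. Homogeneity is what makes this achievable, for since ${\rm dens}\,\mathcal{C}_c \leq {\rm dens}\,\mathcal{C} < \aleph_\alpha = {\rm dens}\,\mathcal{B}_c$ for every $c > \mathbf{0}$, the subalgebra $\mathcal{C}$ is strictly smaller than $\mathcal{B}$ \emph{below every nonzero element}; combined with atomlessness this lets one run an exhaustion argument producing an $a$ whose conditional measure $P\chi_a$ equals the constant $\tfrac12$ (one repeatedly halves the conditional mass, using atomlessness below each $c$ and maximality to reach the constant value). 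This is where I expect the main difficulty to sit, because one must control the conditional measure \emph{uniformly} over $\mathcal{C}$, not merely the total measure.

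With the lemma available, I would build $(a_\xi)$ by transfinite recursion together with a bookkeeping (back-and-forth) device to guarantee generation. Fix in advance a $\tau$-generating set $(d_\xi)_{\xi < \omega_\alpha}$ of $\mathcal{B}$, and at stage $\xi$ set $\mathcal{C}_\xi = \mathcal{B}_\tau\bigl(\{a_\eta : \eta < \xi\}\bigr)$; since $\mathcal{C}_\xi$ is $\tau$-generated by fewer than $\aleph_\alpha$ elements, ${\rm dens}\,\mathcal{C}_\xi < \aleph_\alpha$, so the lemma yields a halving element independent of $\mathcal{C}_\xi$. The delicate point is not the \emph{existence} of independent elements but \emph{surjectivity}: a chain of independent halving elements can close up to a proper subalgebra of the same density $\aleph_\alpha$. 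To defeat this I interleave catch-up steps forcing each $d_\xi$ into the generated algebra, which requires the conditional (relative) form of the construction --- realising $\mathcal{B}_\tau(\mathcal{C}_\xi \cup \{d_\xi\})$, still of density $< \aleph_\alpha$, as the image of a slightly larger product factor compatibly with the embedding already built. Homogeneity guarantees the recursion never stalls before the generated subalgebras exhaust $\mathcal{B}$, and at the end $\mathcal{B}_\tau\bigl(\{a_\xi\}\bigr) \supseteq \{d_\xi : \xi < \omega_\alpha\}$, hence equals $\mathcal{B}$. Finally, the $2^{-n}$ normalisation together with positivity of $\mu$ pins the measure down uniquely, so the resulting isomorphism is measure preserving, completing the identification $\mathcal{B} \cong \widehat{\Sigma}_{\omega_\alpha}$.
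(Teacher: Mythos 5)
The paper does not prove this statement at all: it is quoted as the classical First Maharam Theorem, with references to Maharam's original article and to Fremlin, Lacey, and Plichko--Popov for proofs, so there is no in-paper argument to compare against. Your proposal is, in outline, the standard proof from those sources, and the outline is sound: you correctly locate the analytic heart in the lemma that, over any order closed subalgebra $\mathcal{C}$ with ${\rm dens}\,\mathcal{C} < \aleph_\alpha$, homogeneity forces $\mathcal{B}$ to be relatively atomless over $\mathcal{C}$ (since ${\rm dens}\,\mathcal{B}_c = \aleph_\alpha$ for every $c > \mathbf{0}$), which via conditional expectations and an exhaustion argument yields an element of conditional measure identically $\tfrac12$; and you correctly identify that independence alone does not give surjectivity, so a back-and-forth bookkeeping over a fixed $\tau$-generating family $(d_\xi)_{\xi<\omega_\alpha}$ is needed. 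The one place where your sketch leans hardest on unproved machinery is the catch-up step: absorbing $d_\xi$ requires the \emph{relative} extension lemma --- given the partial measure-preserving isomorphism already built on $\mathcal{C}_\xi$, one must find (countably many) new elements, jointly independent over $\mathcal{C}_\xi$, whose join with $\mathcal{C}_\xi$ contains $d_\xi$, i.e.\ one must match the whole conditional distribution of $d_\xi$ over $\mathcal{C}_\xi$, not merely add halving elements. You flag this but do not carry it out; it is exactly the content of the extension step in Fremlin's treatment, and once it is supplied the recursion closes and the resulting map is a measure-preserving Boolean isomorphism (using ccc to reduce order closure to sequential closure, and positivity of the dyadic values to pin the measure down). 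So the proposal is a correct route --- indeed the canonical one --- but it is a proof sketch whose two key lemmas would still have to be written out in full.
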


The importance of homogeneous measurable algebras is explained in the next theorem.

\begin{thm}[Second Maharam Theorem] \label{th:Mah2}
Any atomless measurable Boolean algebra $\mathcal{B}$ is isomorphic to an at most countable direct sum $\bigoplus_{i \in I} \mathcal{B}_i$ of homogeneous measurable algebras $\mathcal{B}_i$, which, in turn, is isomorphic to $\bigoplus_{i \in I} \widehat{\Sigma}_{\omega_{\alpha_i}}$, where $\aleph_{\alpha_i} = {\rm dens} \, \mathcal{B}_i$ for all $i \in I$ by the first Maharam Theorem.
\end{thm}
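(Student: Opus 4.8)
The statement is classical and, granting the First Maharam Theorem (Theorem~\ref{th:Mah1}), reduces to a decomposition argument. The plan is to chop $\mathbf{1}$ into at most countably many pieces $e_i$ on each of which the Maharam type (density) is constant, and then to identify each piece via Theorem~\ref{th:Mah1}. So fix a strictly positive finite $\sigma$-additive measure $\mu$ witnessing that the atomless algebra $\mathcal{B}$ is measurable; recall that $\mathcal{B}$ then has the ccc and, being Dedekind $\sigma$-complete with the ccc, is in fact Dedekind complete.

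The first and decisive step is to prove a \emph{refinement lemma}: every nonzero $e \in \mathcal{B}$ dominates a nonzero homogeneous element, i.e. an $f \leq e$ with ${\rm dens}\, x = {\rm dens}\, f$ for all nonzero $x \leq f$. For this I would first record the monotonicity of density: if $x \leq f$ then the retraction $y \mapsto y \cap x$ is an order-continuous surjective homomorphism $\mathcal{B}_f \to \mathcal{B}_x$, so the image of any $\tau$-generating set of $\mathcal{B}_f$ $\tau$-generates $\mathcal{B}_x$, giving ${\rm dens}\, x \leq {\rm dens}\, f$. Now the cardinals $\{{\rm dens}\, x : \mathbf{0} < x \leq e\}$ form a nonempty set of cardinals, hence have a least element $\kappa$; choose $f \leq e$ with ${\rm dens}\, f = \kappa$. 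For any nonzero $x \leq f$ monotonicity gives ${\rm dens}\, x \leq \kappa$, while minimality of $\kappa$ among the sub-elements of $e$ gives ${\rm dens}\, x \geq \kappa$; hence ${\rm dens}\, x = \kappa$ and $f$ is homogeneous.

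With the refinement lemma in hand I would assemble the decomposition. By Zorn's lemma choose a maximal disjoint family $\{e_i\}_{i \in I}$ of homogeneous elements of $\mathcal{B}$. The ccc forces $I$ to be at most countable, and Dedekind completeness guarantees that $s = \bigsqcup_{i \in I} e_i$ exists. If $s \neq \mathbf{1}$ then $\mathbf{1} \setminus s > \mathbf{0}$ would, by the refinement lemma, dominate a nonzero homogeneous element disjoint from every $e_i$, contradicting maximality; hence $\{e_i\}_{i \in I}$ is a partition of unity. The map $x \mapsto (x \cap e_i)_{i \in I}$ is then a Boolean isomorphism of $\mathcal{B}$ onto the (at most countable) direct sum $\bigoplus_{i \in I} \mathcal{B}_{e_i}$. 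Finally each principal ideal $\mathcal{B}_{e_i}$ is atomless (being a sub-ideal of an atomless algebra), Dedekind $\sigma$-complete, and carries the strictly positive finite measure $\mu|_{\mathcal{B}_{e_i}}$, so it is a homogeneous measurable algebra; since it is atomless its density is infinite, and Theorem~\ref{th:Mah1} identifies it with $\widehat{\Sigma}_{\omega_{\alpha_i}}$ for $\aleph_{\alpha_i} = {\rm dens}\, \mathcal{B}_{e_i} = {\rm dens}\, e_i$. Assembling the pieces yields $\mathcal{B} \cong \bigoplus_{i \in I} \widehat{\Sigma}_{\omega_{\alpha_i}}$.

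The main obstacle is the refinement lemma, and within it the verification that density is monotone under the retraction $y \mapsto y \cap x$ together with the use of the well-ordering of the cardinals to extract a least Maharam type; the rest of the argument is bookkeeping that leans on the ccc (countability of $I$), Dedekind completeness (existence of the supremum $s$), and, crucially, on the classification of the homogeneous pieces supplied by Theorem~\ref{th:Mah1}, which we are permitted to assume. One subtlety worth double-checking is that the ``direct sum'' $\bigoplus$ in the statement is exactly the internal product $\prod_{i \in I} \mathcal{B}_{e_i}$ induced by a partition of unity, so that no compatibility of the measures $\mu|_{\mathcal{B}_{e_i}}$ across the summands is needed for the Boolean isomorphism.
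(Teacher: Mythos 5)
Your argument is correct and is the standard decomposition proof of Maharam's theorem (a maximal disjoint family of Maharam-homogeneous elements, obtained from the refinement lemma via monotonicity of density and the well-ordering of cardinals, then ccc and Dedekind completeness to get a countable partition of unity and the product decomposition). The paper itself states this theorem without proof, as a classical result cited to Maharam, Fremlin, Lacey and Plichko--Popov, so there is no in-paper argument to compare against; your proof matches the one found in those references.
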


By a \emph{semialgebra} in a Boolean algebra $\mathcal{B}$ we mean a subset $P \subseteq \mathcal{B}$ possessing the following properties:
\begin{enumerate}
  \item $\mathbf{0}, \mathbf{1} \in P$;
  \item if $a,b \in P$ then $a \cap b \in P$;
  \item if $a_1,b \in P$ with $a_1 \subseteq b$ then there are $n \geq 1$ and $a_2, \ldots, a_n \in \mathcal{P}$ (if $n > 1$) such that $b = \bigsqcup_{m = 1}^n a_m$.
\end{enumerate}

We will use the following description of the order closed subalgebra generated by a semialgebra.

\begin{prop} \label{pr:semigen}
Let $\mathcal{A}$ be a semialgebra in a Boolean algebra $\mathcal{B}$. Then
\begin{enumerate}
  \item $\mathcal{B}(\mathcal{A})$ equals the set of all finite disjoint unions of elements of $\mathcal{B}$;
  \item $\mathcal{B}_\tau(\mathcal{A})$ equals the set of all order limits of elements of $\mathcal{B}(\mathcal{A})$.
\end{enumerate}
\end{prop}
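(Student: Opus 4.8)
The plan is to prove (1) by a direct identification and then deduce (2) from the cited Fremlin result. Write $S$ for the collection of all finite disjoint unions $\bigsqcup_{m=1}^n a_m$ with $a_1,\dots,a_n \in \mathcal{A}$. Since $\mathcal{A} \subseteq \mathcal{B}(\mathcal{A})$ and any subalgebra is closed under finite unions, the inclusion $S \subseteq \mathcal{B}(\mathcal{A})$ is immediate; the content of (1) is the reverse inclusion, which I would obtain by checking that $S$ is itself a subalgebra containing $\mathcal{A}$, so that minimality of $\mathcal{B}(\mathcal{A})$ forces $\mathcal{B}(\mathcal{A}) \subseteq S$.

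To verify that $S$ is a subalgebra, first note $\mathbf{0},\mathbf{1} \in \mathcal{A} \subseteq S$ by property~(1) of a semialgebra. For closure under intersection, take $x = \bigsqcup_{i=1}^n a_i$ and $y = \bigsqcup_{j=1}^m b_j$ in $S$; then $x \cap y = \bigsqcup_{i,j}(a_i \cap b_j)$, each $a_i \cap b_j$ lies in $\mathcal{A}$ by property~(2), and the family $(a_i \cap b_j)_{i,j}$ is disjoint because distinct index pairs force either $a_i \cap a_{i'} = \mathbf{0}$ or $b_j \cap b_{j'} = \mathbf{0}$; hence $x \cap y \in S$. The key step is closure under complementation. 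For a single $a \in \mathcal{A}$, I apply property~(3) with $a_1 = a$ and $b = \mathbf{1}$ (legitimate since $\mathbf{1} \in \mathcal{A}$ and $a \subseteq \mathbf{1}$) to obtain $a_2,\dots,a_n \in \mathcal{A}$ with $\mathbf{1} = \bigsqcup_{m=1}^n a_m$, whence $\mathbf{1} \setminus a = \bigsqcup_{m=2}^n a_m \in S$. For a general $x = \bigsqcup_{i=1}^n a_i \in S$, De Morgan gives $\mathbf{1} \setminus x = \bigcap_{i=1}^n (\mathbf{1} \setminus a_i)$, which lies in $S$ since each $\mathbf{1} \setminus a_i \in S$ and $S$ is already known to be closed under intersection. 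Closure under union then follows, and (1) is proved.

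For (2), I would observe that $\mathcal{B}(\mathcal{A})$ is a subalgebra of $\mathcal{B}$ and that $\mathcal{B}_\tau(\mathcal{A}) = \mathcal{B}_\tau(\mathcal{B}(\mathcal{A}))$, since any order closed subalgebra containing $\mathcal{A}$ contains $\mathcal{B}(\mathcal{A})$ and conversely. By the description recalled in the Preliminaries \cite[313F(c)]{F3}, the subalgebra $\mathcal{B}_\tau$ of a given subalgebra equals its order closure, that is, the set of all order limits of nets drawn from it. Applying this to the subalgebra $\mathcal{B}(\mathcal{A})$ yields exactly the assertion of (2).

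The only genuinely delicate point is the complementation step in (1), where property~(3) enters in an essential way; everything else is either the trivial inclusion, bookkeeping with disjointness in the intersection step, or a direct appeal to \cite[313F(c)]{F3}. I would therefore spend most of the write-up making the use of property~(3) precise and verifying the disjointness of the product family $(a_i \cap b_j)_{i,j}$.
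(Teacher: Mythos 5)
Your proof is correct and follows essentially the same route the paper intends: the paper dispatches item (1) as a "standard technical exercise" with a citation to \cite[Lemma~1.2.14]{Bog}, whose argument is exactly your verification that the finite disjoint unions form a subalgebra (with property (3) of a semialgebra supplying complements of single elements and De Morgan plus closure under intersection handling general complements), and it deduces item (2) from \cite[313F(c)]{F3} just as you do. No gaps.
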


To prove item (1) of Proposition~\ref{pr:semigen} is a standard technical exercise (see \cite[Lemma~1.2.14]{Bog}). Item (2) follows from (1) and the fact that the order closure of a subalgebra is a subalgebra \cite[313F(c)]{F3}.

\section{Pre-Rademacher families} \label{sec:crs}

$\,\,\,\,\,\,$\textbf{The usual Rademacher family on a dyadic interval.} Let $I_n^k = \bigl[\frac{k-1}{2^n}, \frac{k}{2^n} \bigr)$ be the dyadic intervals, $n = 0,1,2, \ldots$, $k = 1, \ldots, 2^n$. We set
\begin{equation} \label{eqzxxxm}
r_n = \bigsqcup_{j=1}^{2^{n-1}} I_n^{2j-1}, \,\,\,\,\, n = 1,2, \ldots.
\end{equation}
By the \emph{usual Rademacher family} on $[0,1)$ we mean the sequence $(\widehat{r}_n)_{n \in \mathbb N}$ of the co-sets of $r_n$ in the quotient Boolean algebra $\widehat{\mathcal{B}}$ modulo measure null sets of the Borel $\sigma$-algebra $\mathcal{B}$ on $[0,1)$. Observe that the usual Rademacher family $\widehat{\mathcal{R}} = (\widehat{r}_n)_{n \in \mathbb N}$ possesses the following properties:
\begin{enumerate}
  \item[(R1)] $\bigcap_{j \in J} \theta_j \widehat{r}_j \neq \mathbf{0}$ for any finite subset $J \subset \mathbb N$ and any collection of signs $\theta_j = \pm 1$, $j \in J$;
  \item[(R2)] $\bigcap_{j \in J} \theta_j \widehat{r}_j = \mathbf{0}$ for any infinite subset $J \subseteq \mathbb N$ and any collection of signs $\theta_j = \pm 1$, $j \in J$;
  \item[(R3)] for any $n_0 \in \mathbb N$ one has $\widehat{\mathcal{B}}_\tau \bigl((\widehat{r}_n)_{n \in \mathbb N \setminus \{n_0\}}\bigr) \neq \widehat{\mathcal{B}}_\tau (\widehat{\mathcal{R}})$;
  \item[(R4)] $\widehat{\mathcal{B}}_\tau (\widehat{\mathcal{R}}) = \widehat{\mathcal{B}}$.
\end{enumerate}

We are going to introduce a complete Rademacher family in a Boolean algebra as a family of two-point partitions of unity to satisfy properties (R1)-(R4). More precisely,
\begin{itemize}
  \item (R1) determines a pre-Rademacher family;
  \item (R1)\&(R2) determine a vanishing at infinity pre-Rademacher family;
  \item (R1)\&(R3) determine a minimal pre-Rademacher family;
  \item (R1)\&(R4) determine a complete pre-Rademacher family;
  \item (R1)\&(R2)\&(R3) determine a Rademacher family;
  \item (R1)\&(R2)\&(R3)\&(R4) determine a complete Rademacher family.
\end{itemize}

We define the \emph{usual Rademacher family on a dyadic interval} $I_m^j$, $m \in \mathbb N$, $j \in \{1, \ldots, 2^m\}$, to be the subsequence $(\widehat{r}_n{'})_{n \in \mathbb N}$ of $(\widehat{r}_n)_{n \in \mathbb N}$ consisting of all those $\widehat{r}_n$ for which $\widehat{r}_n \leq \widehat{r}_{2^m+j}$ and numbered by all positive integers. So, formally the usual Rademacher family on any dyadic interval consists of elements of $\widehat{\mathcal{B}}$. Note that the usual Rademacher family on any dyadic interval has properties (R1), (R2) and (R3) and does not have (R4) if the dyadic interval is not $[0,1)$.

We provide below with examples showing that none of properties (R1)-(R4) follows from the rest ones even for the Boolean algebra $\widehat{\mathcal{B}}$ (of course, when saying of a sequence $\mathcal{R} = (r_n)_{n=1}^\infty$ we mean properties (R1)-(R4) adapted to that sequence).

\begin{ex} \label{ex:ffk1}
There is a sequence $\mathcal{R} = (r_n)_{n=1}^\infty$ in $\widehat{\mathcal{B}}$ satisfying (R2), (R3) and (R4) and failing (R1).
\end{ex}

\begin{proof}
Let $(\widehat{r}_n{'})_{n \in \mathbb N}$ and $(\widehat{r}_n{''})_{n \in \mathbb N}$ be the usual Rademacher families on $\bigl[0,\frac12\bigr)$ and $\bigl[\frac12,1\bigr)$ respectively. We define a sequence $(r_n)_{n \in \mathbb N}$ in $\widehat{\mathcal{B}}$ by setting $r_{2k-1} = \widehat{r}_k{'}$ and $r_{2k} = \widehat{r}_k{''}$ for $k = 1,2, \ldots$. Then $\mathcal{R} = (r_n)_{n=1}^\infty$ has the desired properties.
\end{proof}

\begin{ex} \label{ex:ffk2}
There is a sequence $\mathcal{R} = (r_n)_{n=1}^\infty$ in $\widehat{\mathcal{B}}$ satisfying (R1), (R3) and (R4) and failing (R2).
\end{ex}

\begin{proof}
Let $(\widehat{r}_n{'})_{n \in \mathbb N}$ and $(\widehat{r}_n{''})_{n \in \mathbb N}$ be the usual Rademacher families on $\bigl[0,\frac12\bigr)$ and $\bigl[\frac12,1\bigr)$ respectively. We define a sequence $(r_n)_{n \in \mathbb N}$ in $\widehat{\mathcal{B}}$ by setting $r_{2k-1} = \widehat{[0,\frac12\bigr)} \sqcup \widehat{r}_{2k-1}{''}$ and $r_{2k} = \widehat{r}_k{'} \sqcup \widehat{r}_{2k}{''}$ for $k=1,2,\ldots$. Then $\mathcal{R} = (r_n)_{n \in \mathbb N}$ satisfies (R1). Indeed, for any $J \subseteq \mathbb N$ and any collection of signs $\theta_j = \pm 1$, $j \in J$, one has
$$
\bigcap_{j \in J} \theta_j r_j \geq \bigcap_{j \in J} \left( \widehat{\Bigl[ \frac12,1 \Bigr)} \cap \theta_j r_j \right) = \widehat{\Bigl[ \frac12,1 \Bigr)} \cap \bigcap_{j \in J} \theta_j \widehat{r}_j{''} \neq \mathbf{0}.
$$

$\mathcal{R}$ does not satisfy (R2) because $\bigcap_{k=1}^\infty r_{2k-1} = \bigl[1,\frac12\bigr) \neq \mathbf{0}$.

$\mathcal{R}$ satisfies (R3) because $(\widehat{r}_n{''})_{n \in \mathbb N}$ satisfies (R3).

(R4) for $\mathcal{R}$ follows from the observation that $\widehat{\mathcal{B}}_\tau (\mathcal{R})$ contains every dyadic interval.
\end{proof}

\begin{ex} \label{ex:ffk3}
There is a sequence $\mathcal{R} = (r_n)_{n=1}^\infty$ in $\widehat{\mathcal{B}}$ satisfying (R1), (R2) and (R4) and failing (R3).
\end{ex}

The existence of a family satisfying the claims of Example~\ref{ex:ffk3} is not so obvious and follows from Theorem~\ref{th:kjhgf5} below.

\begin{ex} \label{ex:ffk4}
There is a sequence $\mathcal{R} = (r_n)_{n=1}^\infty$ in $\widehat{\mathcal{B}}$ satisfying (R1), (R2) and (R3) and failing (R4).
\end{ex}

\begin{proof}
Just remove from the usual Rademacher family any element.
\end{proof}

Given a subset $\mathcal{R}$ of a Boolean algebra $\mathcal{B}$, a \emph{particle} (with respect to $\mathcal{R}$) is any element $\bigcap_{j \in J} \theta_j r_j$, where $J$ is a finite index set, $\theta_j \in \{-1,1\}$ and $r_j \in R$ are distinct elements, $j \in J$. By convention, the particle constructed by the empty set $J = \emptyset$, as well as the infimum over the empty set, is considered to be unit $\mathbf{1}$ of $\mathcal{B}$.

Observe that if $\{r_1, \ldots, r_n\}$ is a finite subset of $\mathcal{R}$ then
\begin{equation} \label{eq:beg}
\mathbf{1} = \bigsqcup_{\theta_j = \pm 1} \bigcap_{j=1}^n \theta_j r_j.
\end{equation}

\begin{defn}
A subset $\mathcal{R}$ of a Boolean algebra $\mathcal{B}$ is called a \emph{pre-Rademacher family} if all particles with respect to $\mathcal{R}$ are nonzero.
\end{defn}

Evidently, every subfamily of a pre-Rademacher family is a pre-Rademacher family.

$\,\,\,\,\,\,$\textbf{Particles semialgebra.} \begin{prop} \label{pr:jfggdd5567}
Let $\mathcal{R}$ be a pre-Rademacher family in a Boolean algebra $\mathcal{B}$ and $\mathcal{P}$ the set of all particles with respect to $\mathcal{R}$. Then $\overline{\mathcal{P}} = \mathcal{P} \cup \{\mathbf{0}\}$ is a semialgebra.
\end{prop}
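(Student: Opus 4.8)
The plan is to verify the three defining properties of a semialgebra directly for $\overline{\mathcal P}=\mathcal P\cup\{\mathbf 0\}$. Throughout I would record a particle as $\bigcap_{r\in S}\theta(r)\,r$, determined by a finite set $S\subseteq\mathcal R$ together with a sign function $\theta\colon S\to\{-1,1\}$, the empty $S$ giving $\mathbf 1$. Property (1) is immediate: $\mathbf 0$ is adjoined by hand, and $\mathbf 1\in\mathcal P$ is the empty particle. For property (2), given two particles $b=\bigcap_{r\in S}\theta(r)\,r$ and $a=\bigcap_{r\in T}\eta(r)\,r$, I would split into cases according to whether $\theta$ and $\eta$ agree on $S\cap T$. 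If they disagree at some $r$, then $\theta(r)\,r\cap\eta(r)\,r=r\cap(\mathbf 1\setminus r)=\mathbf 0$, whence $b\cap a=\mathbf 0\in\overline{\mathcal P}$; if they agree everywhere on $S\cap T$, the two sign functions glue to a single sign function on $S\cup T$ and $b\cap a$ is again a particle on $S\cup T$.

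The real content is property (3), and this is where I expect the main effort. Given $a_1\leq b$ with both in $\overline{\mathcal P}$, I would first dispose of the degenerate cases ($b=\mathbf 0$ forces $a_1=\mathbf 0=b$ with $n=1$; $a_1=\mathbf 0$ is handled by $b=\mathbf 0\sqcup b$), so assume $a_1=\bigcap_{r\in T}\eta(r)\,r$ and $b=\bigcap_{r\in S}\theta(r)\,r$ are nonzero particles. The idea is to refine $b$ over exactly those generators occurring in $a_1$ but not in $b$, i.e.\ over $T\setminus S$: applying the partition identity \eqref{eq:beg} inside $b$ gives the disjoint decomposition
\[
b=\bigsqcup_{\sigma}\Bigl(\bigcap_{r\in S}\theta(r)\,r\ \cap\ \bigcap_{r\in T\setminus S}\sigma(r)\,r\Bigr),
\]
the union running over all sign functions $\sigma\colon T\setminus S\to\{-1,1\}$. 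The cells are pairwise disjoint because distinct $\sigma$ differ in some coordinate, and each cell is a particle on $S\cup T$.

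It then remains to identify $a_1$ with the distinguished cell corresponding to $\sigma=\eta|_{T\setminus S}$, and here I would use the hypothesis $a_1\leq b$ twice. First, since $a_1=a_1\cap b\neq\mathbf 0$, the signs of $a_1$ and $b$ cannot conflict on $S\cap T$, so $\eta$ and $\theta$ agree there; otherwise some $\theta(r)\,r\cap\eta(r)\,r=\mathbf 0$ would force $a_1\cap b=\mathbf 0$. Second, $a_1\leq b\leq\theta(r)\,r$ for each $r\in S$, hence $a_1\leq\bigcap_{r\in S\setminus T}\theta(r)\,r$. Combining these, the distinguished cell reduces to $a_1\cap\bigcap_{r\in S\setminus T}\theta(r)\,r=a_1$, so that $b=a_1\sqcup\bigsqcup_{\sigma\neq\eta|_{T\setminus S}}(\cdots)$ with the remaining cells lying in $\mathcal P$; crucially, the pre-Rademacher hypothesis guarantees every particle is nonzero, so each cell is a genuine element of $\mathcal P$ and (3) holds with $n=2^{|T\setminus S|}$. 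The only delicate point is this bookkeeping with the two index sets $S$ and $T$ and making sure their overlap is controlled by the sign-agreement observation; everything else reduces to \eqref{eq:beg} and the definition of $\leq$ on $\mathcal B$.
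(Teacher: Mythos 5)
Your proof is correct and follows essentially the same route as the paper's: check that the sign functions agree on the common index set, then refine $b$ by the partition identity \eqref{eq:beg} over the generators occurring in $a_1$ but not in $b$, and identify $a_1$ with the distinguished cell, the remaining cells being nonzero particles by the pre-Rademacher hypothesis. The only (harmless) deviation is that the paper first proves the index set of $b$ is contained in that of $a_1$ (so that your $S\setminus T$ is empty), whereas you bypass this step by absorbing $\bigcap_{r\in S\setminus T}\theta(r)\,r$ into $a_1$ via $a_1\leq b\leq\theta(r)\,r$.
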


\begin{proof}
Obviously, $\mathbf{0}, \mathbf{1} \in \overline{\mathcal{P}}$ and $\overline{\mathcal{P}}$ is closed under intersections. We prove (3). Let $a_1, b \in \mathcal{P}$ with $a_1 \subseteq b$. Assume, for convenience, $\mathcal{R} = (r_i)_{i \in I}$ with $r_i \neq r_j$ for $i \neq j$. Let $J,K$ be any finite subsets of $I$ and
\begin{equation} \label{eq:beg2}
\mathbf{0} \neq a_1 = \bigcap_{j \in J} \theta_j' r_j \subseteq \bigcap_{k \in K} \theta_k'' r_k = b.
\end{equation}

First we show that $K \subseteq J$. Indeed, if $\exists k \in K \setminus J$ then $a_1 = (a_1 \cap \theta_k'' r_k) \cup (a_1 \cap - \theta_k'' r_k)$. Since all particles are nonzero, $\mathbf{0} \neq a_1 \cap - \theta_k'' r_k \subseteq a_1 \subseteq b \subseteq \theta_k'' r_k, $ which contradicts $- \theta_k'' r_k \cap \theta_k'' r_k = 0$.

Now we prove that $(\forall k \in K) \, \theta_k'' = \theta_k'$. Indeed, if $(\exists k \in K) \, \theta_k'' \neq \theta_k'$ then intersecting both sides of \eqref{eq:beg2} with $\theta_k' r_k$ we would obtain $\mathbf{0} \neq a_1 \subseteq b \cap \theta_k'' r_k \cap \theta_k' r_k = \mathbf{0},$ a contradiction. By \eqref{eq:beg}, $\mathbf{1} = \bigsqcup_{\theta_j = \pm 1} \bigcap_{i \in J \setminus K} \theta_i r_i$ and hence
\begin{align*}
b &= \bigsqcup_{\theta_j = \pm 1} \Bigl(b \cap \bigcap_{i \in J \setminus K} \theta_i r_i\Bigr) = \bigsqcup_{\theta_j = \pm 1} \Bigl(\bigcap_{k \in K} \theta_k'' r_k \cap \bigcap_{i \in J \setminus K} \theta_i r_i\Bigr) = \bigsqcup_{\theta_j = \pm 1} \Bigl(\bigcap_{k \in K} \theta_k' r_k \cap \bigcap_{i \in J \setminus K} \theta_i r_i\Bigr), \\
\end{align*}
and one of the summands equals $a_1$.
\end{proof}

\begin{defn}
Let $\mathcal{R}$ be a pre-Rademacher family in a Boolean algebra $\mathcal{B}$ with the set $\mathcal{P}$ of all particles. The semialgebra $\overline{\mathcal{P}} = \mathcal{P} \cup \{\mathbf{0}\}$ is called the \emph{particle semialgebra} of the family $\mathcal{R}$.
\end{defn}

$\,\,\,\,\,\,$\textbf{Atoms of the order closed subalgebra generated by a pre-Rademacher family.}
Recall that a nonzero element $a$ of a Boolean algebra $\mathcal{B}$ is called an atom if for any $x \in \mathcal{B}$ the inclusion $x \subseteq a$ implies that either $x = \mathbf{0}$ or $x = a$.

\begin{defn} \label{def015}
A pre-Rademacher family $\mathcal{R}$ in a Boolean algebra $\mathcal{B}$ is called:
\begin{itemize}
  \item $\sigma$-\emph{complete} if $\mathcal{R}$ $\sigma$-generates $\mathcal{B}$;
  \item $\tau$-\emph{complete} if $\mathcal{R}$ $\tau$-generates $\mathcal{B}$;
  \item $\sigma$-\emph{atomless} if $\mathcal{B_\sigma(\mathcal{R})}$ is atomless;
  \item $\tau$-\emph{atomless} if $\mathcal{B_\tau(\mathcal{R})}$ is atomless.
\end{itemize}
If, in addition, $\mathcal{R}$ has the ccc, we say \emph{atomless} or \emph{complete} for both $\sigma$- and $\tau$- versions\footnote{remind that for ccc algebras one has $\mathcal{B}_\sigma (\mathcal{R}) = \mathcal{B}_\tau (\mathcal{R})$}.
\end{defn}

The proofs of following observations are straightforward.

\begin{remark} \label{rem:ddk}
$\,$
\begin{enumerate}
  \item The standard Rademacher family $(\overline{r}_\gamma)_{\gamma < \omega_\alpha}$ in $\widehat{\Sigma}_{\omega_\alpha}$ is an atomless complete pre-Rademacher family.
  \item The above properties of a pre-Rademacher family are preserved under an isomorphism.
\end{enumerate}
\end{remark}

As we will see later, a subsequence of an atomless countable pre-Rademacher family need not be atomless (see item (1) of Remark~\ref{rem:xbbd}).

\begin{thm} \label{th:eqcard}
Let $\mathcal{R} = (r_i)_{i \in I}$ be an infinite pre-Rademacher family in a Boolean algebra $\mathcal{B}$. Then
\begin{enumerate}
  \item for every nonzero element $a$ of $\mathcal{B_\tau(\mathcal{R})}$ the following assertions are equivalent
\begin{enumerate}
  \item $a$ is an atom in $\mathcal{B_\tau(\mathcal{R})}$;
  \item there is a collection of signs $(\overline{\theta}_i)_{i \in I}$ such that $a = \bigcap_{i \in I} \overline{\theta}_i r_i$.
\end{enumerate}
  \item the following assertions are equivalent
  \begin{enumerate}
    \item $\mathcal{B_\tau(\mathcal{R})}$ is atomless;
    \item for every collection of signs $\theta_i = \pm 1$ one has that either $\bigcap_{i \in I} \theta_i r_i = \mathbf{0}$ or $\bigcap_{i \in I} \theta_i r_i$ does not exist.
  \end{enumerate}
\end{enumerate}
\end{thm}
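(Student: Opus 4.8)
The plan is to establish part (1) first and then read off part (2) as an immediate corollary. The engine of the whole argument is the following elementary observation about downward-directed nets, which I would isolate at the start: \emph{if $(u_\alpha)$ is a net with $u_\alpha \downarrow \mathbf{0}$ and $f,g \in \mathcal{B}$ satisfy, for every index $\alpha$, $f \leq u_\alpha$ or $g \leq u_\alpha$, then $f = \mathbf{0}$ or $g = \mathbf{0}$.} Indeed, if $g \neq \mathbf{0}$ then $g \not\leq u_{\alpha_0}$ for some $\alpha_0$ (else $g \leq \inf_\alpha u_\alpha = \mathbf{0}$), which forces $f \leq u_{\alpha_0}$; for an arbitrary $\beta$, downward-directedness yields $\gamma$ with $u_\gamma \leq u_{\alpha_0} \cap u_\beta$, and since $g \leq u_\gamma \leq u_{\alpha_0}$ is impossible we get $f \leq u_\gamma \leq u_\beta$, whence $f \leq \inf_\beta u_\beta = \mathbf{0}$. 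Throughout I would use that, applying Proposition~\ref{pr:semigen} to the particle semialgebra $\overline{\mathcal{P}}$ of Proposition~\ref{pr:jfggdd5567}, every element of $\mathcal{B}_\tau(\mathcal{R})$ is an order limit of a net of elements of $\mathcal{B}(\mathcal{R})$, i.e. of finite disjoint unions of particles.

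For the implication (b)$\Rightarrow$(a), suppose $a = \bigcap_{i \in I} \overline{\theta}_i r_i \neq \mathbf{0}$; note $a \in \mathcal{B}_\tau(\mathcal{R})$ since this subalgebra is order closed. A direct check shows that for a particle $q = \bigcap_{k \in K} \eta_k r_k$ one has $a \cap q = a$ when $\eta_k = \overline{\theta}_k$ for all $k$ and $a \cap q = \mathbf{0}$ otherwise (in the latter case some $\eta_{k_0} r_{k_0}$ is disjoint from $\overline{\theta}_{k_0} r_{k_0} \geq a$), so $a \cap x \in \{\mathbf{0},a\}$ for every $x \in \mathcal{B}(\mathcal{R})$. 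To pass to an arbitrary $x \in \mathcal{B}_\tau(\mathcal{R})$ I would take a net $d_\alpha \stackrel{\rm o}{\longrightarrow} x$ with $d_\alpha \in \mathcal{B}(\mathcal{R})$ and $(a \cap d_\alpha) \triangle (a \cap x) \leq d_\alpha \triangle x \leq u_\alpha \downarrow \mathbf{0}$; writing $y = a \cap x \leq a$, each index gives $y \leq u_\alpha$ (when $a \cap d_\alpha = \mathbf{0}$) or $a \setminus y \leq u_\alpha$ (when $a \cap d_\alpha = a$), so the dichotomy forces $y = \mathbf{0}$ or $y = a$. Hence any $x \leq a$ in $\mathcal{B}_\tau(\mathcal{R})$ satisfies $x = a \cap x \in \{\mathbf{0},a\}$, i.e. $a$ is an atom.

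For (a)$\Rightarrow$(b), let $a$ be an atom. Since each $r_i \in \mathcal{B}_\tau(\mathcal{R})$, exactly one of $a \leq r_i$, $a \leq -r_i$ holds; I choose $\overline{\theta}_i$ so that $a \leq \overline{\theta}_i r_i$, making $a$ a lower bound of $\{\overline{\theta}_i r_i\}$ with $a \leq p_J := \bigcap_{j \in J} \overline{\theta}_j r_j$ for all finite $J$. Because $\bigcap_{i \in I} \overline{\theta}_i r_i = \inf_J p_J$ whenever either side exists, it suffices to show that every lower bound $c$ of $\{\overline{\theta}_i r_i\}$ satisfies $c \leq a$; then $a$ is the greatest lower bound and equals the required intersection. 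The key point is that such a $c$ imitates $a$ on particles: for a particle $q$, if $a \leq q$ then $q = p_K$ and $c \leq q$, while if $a \perp q$ then $q \leq -\overline{\theta}_{k_0} r_{k_0}$ for some $k_0$ and $c \leq \overline{\theta}_{k_0} r_{k_0}$ force $c \perp q$; hence for $x \in \mathcal{B}(\mathcal{R})$ one has $c \leq x$ whenever $a \leq x$ and $c \perp x$ whenever $a \perp x$. Approximating $a$ by a net $d_\alpha \stackrel{\rm o}{\longrightarrow} a$ with $d_\alpha \in \mathcal{B}(\mathcal{R})$ and $a \triangle d_\alpha \leq u_\alpha \downarrow \mathbf{0}$, the atom dichotomy gives, for each $\alpha$, either $a \leq d_\alpha$ (so $c \leq d_\alpha$ and $c \setminus a \leq d_\alpha \setminus a \leq u_\alpha$) or $a \perp d_\alpha$ (so $a = a \setminus d_\alpha \leq u_\alpha$); thus $c \setminus a \leq u_\alpha$ or $a \leq u_\alpha$ for every $\alpha$, and the dichotomy lemma with $a \neq \mathbf{0}$ yields $c \setminus a = \mathbf{0}$, i.e. $c \leq a$.

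Part (2) would then follow at once: by part (1) the atoms of $\mathcal{B}_\tau(\mathcal{R})$ are exactly the nonzero elements of the form $\bigcap_{i \in I} \theta_i r_i$, so $\mathcal{B}_\tau(\mathcal{R})$ is atomless if and only if no choice of signs produces such a nonzero existing intersection, which is precisely condition (2b). The step I expect to demand the most care is the existence issue concealed in (a)$\Rightarrow$(b): since $\mathcal{B}$ need not be Dedekind complete, the infinite intersection $\bigcap_{i \in I} \overline{\theta}_i r_i$ cannot be assumed to exist a priori, and the entire role of the dichotomy lemma is to certify that the atom $a$ is genuinely the greatest lower bound of $\{\overline{\theta}_i r_i\}$, so that the intersection both exists and equals $a$.
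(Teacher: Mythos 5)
Your proof is correct, and while it runs on the same fuel as the paper's --- order-approximation of elements of $\mathcal{B}_\tau(\mathcal{R})$ by nets of finite disjoint unions of particles, combined with the sign-matching dichotomy $a\cap q\in\{\mathbf{0},a\}$ for particles $q$ --- it is organized around different lemmas. The paper first proves that every positive lower bound of $\{\overline{\theta}_i r_i\}$ lying in $\mathcal{B}_\tau(\mathcal{R})$ is an order limit of a net of $\Theta$-consistent particles (Lemma~\ref{le:048uy}), deduces that any two such positive lower bounds coincide (Lemma~\ref{le:048uy2}), and then gets both implications of (1) by playing candidate lower bounds off against each other ($x\setminus a$ in one direction, $b\leq a$ in the other). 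You instead isolate an elementary dichotomy for downward-directed nets and apply it twice, once to show $a\cap x\in\{\mathbf{0},a\}$ for all $x\in\mathcal{B}_\tau(\mathcal{R})$ and once to show $c\setminus a=\mathbf{0}$ for every lower bound $c$. Your route is somewhat more self-contained (no analogue of Lemma~\ref{le:048uy} is needed), and it buys a small but genuine improvement in (a)$\Rightarrow$(b): your argument that $c\leq a$ works for an arbitrary lower bound $c\in\mathcal{B}$, since the ``imitation'' step uses only $c\leq\overline{\theta}_i r_i$, whereas the paper's Lemma~\ref{le:048uy2} applies only to lower bounds inside $\mathcal{B}_\tau(\mathcal{R})$; this matches the literal reading of $\bigcap_{i\in I}\overline{\theta}_i r_i$ as an infimum computed in $\mathcal{B}$. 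What the paper's decomposition buys in exchange is the reusable Lemma~\ref{le:048uy2} itself, which it invokes verbatim in both directions of (1). Your closing remark correctly identifies the existence of the infimum as the delicate point, and your treatment of it is sound.
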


For the proof, we need two lemmas. Let $\overline{\mathcal{P}}$ be the semialgebra of $\mathcal{R}$, $\Theta = (\overline{\theta}_i)_{i \in I}$ a fixed collection of signs. We set
$$
\mathcal{P}_\Theta = \Bigl\{ \bigcap_{j \in J} \overline{\theta}_j r_j: \,\, J \,\, \mbox{is a finite subset of} \,\, I \Bigr\}.
$$

\begin{lemma} \label{le:048uy}
Let $0 < z \in B_\tau(\mathcal{R})$ be any lower bound for $\{\overline{\theta}_i r_i: \, i \in I\}$. Then there is a net $(z_\alpha')$ in $\mathcal{P}_\Theta$ such that $z_\alpha' \stackrel{\rm o}{\longrightarrow} z$.
\end{lemma}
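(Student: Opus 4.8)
The plan is to realize $z$ as an order limit of finite disjoint unions of particles and then ``round'' each term to a single particle belonging to $\mathcal{P}_\Theta$. Since $\mathcal{R}\subseteq\overline{\mathcal{P}}\subseteq\mathcal{B}(\mathcal{R})$, we have $\mathcal{B}_\tau(\mathcal{R})=\mathcal{B}_\tau(\overline{\mathcal{P}})$, so Proposition~\ref{pr:semigen}(2) applies to $z\in\mathcal{B}_\tau(\overline{\mathcal{P}})$: there is a net $(w_\alpha)$ of elements of $\mathcal{B}(\overline{\mathcal{P}})$, i.e.\ finite disjoint unions of particles, with $w_\alpha\stackrel{\rm o}{\longrightarrow}z$. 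I would fix a dominating net $u_\alpha\downarrow\mathbf{0}$ with $w_\alpha\triangle z\leq u_\alpha$ for all $\alpha$.

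For each $\alpha$, let $K_\alpha\subseteq I$ be the finite set of all indices occurring in the particles whose disjoint union is $w_\alpha$, put $p_\alpha=\bigcap_{j\in K_\alpha}\overline{\theta}_j r_j\in\mathcal{P}_\Theta$ (nonzero, as $\mathcal{R}$ is pre-Rademacher), and define $z_\alpha'=w_\alpha\cap p_\alpha$. The key observation is that $w_\alpha$ lies in the finite subalgebra generated by $\{r_j:\ j\in K_\alpha\}$, whose atoms are precisely the full particles $\bigcap_{j\in K_\alpha}\sigma_j r_j$ over all sign choices $(\sigma_j)$; since $p_\alpha$ is one of these atoms, $w_\alpha\cap p_\alpha$ is either $\mathbf{0}$ or $p_\alpha$. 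Thus each $z_\alpha'$ equals either $p_\alpha\in\mathcal{P}_\Theta$ or $\mathbf{0}$.

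To check convergence, note that $z\leq\overline{\theta}_i r_i$ for every $i\in I$ forces $z\leq p_\alpha$, hence $z\cap p_\alpha=z$, and therefore $z_\alpha'\triangle z=(w_\alpha\cap p_\alpha)\triangle(z\cap p_\alpha)=(w_\alpha\triangle z)\cap p_\alpha\leq u_\alpha$, so that $z_\alpha'\stackrel{\rm o}{\longrightarrow}z$. It remains to discard the terms with $z_\alpha'=\mathbf{0}$: such an index satisfies $z=z_\alpha'\triangle z\leq u_\alpha$, and these indices cannot be cofinal, since otherwise monotonicity of $(u_\alpha)$ would give $u_\beta\geq z$ for all $\beta$ and hence $\mathbf{0}=\inf_\alpha u_\alpha\geq z>\mathbf{0}$. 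Restricting the net to the cofinal tail on which $z_\alpha'=p_\alpha\neq\mathbf{0}$ yields a net in $\mathcal{P}_\Theta$ order converging to $z$, as required.

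I expect the main obstacle to be the middle step: guaranteeing that the rounded element $w_\alpha\cap p_\alpha$ is a single member of $\mathcal{P}_\Theta$ rather than a disjoint union of several particles. This is exactly what the atom observation secures --- $p_\alpha$ is an atom of the finite algebra containing $w_\alpha$, so the intersection collapses to $\mathbf{0}$ or $p_\alpha$ --- and the pre-Rademacher hypothesis is what keeps $p_\alpha$ nonzero so that it genuinely belongs to $\mathcal{P}_\Theta$.
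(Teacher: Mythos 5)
Your proof is correct, and its skeleton coincides with the paper's: approximate $z$ by a net of finite disjoint unions of particles via Proposition~\ref{pr:semigen}, replace each approximant by a single element of $\mathcal{P}_\Theta$, and use the hypothesis $z \leq \overline{\theta}_i r_i$ to control the symmetric difference. The rounding step, however, is genuinely different. The paper keeps only those summands of its approximant whose signs all agree with $\Theta$, shows the discarded summands are disjoint from $z$ (each contains a factor $-\overline{\theta}_{i_0} r_{i_0}$ while $z \leq \overline{\theta}_{i_0} r_{i_0}$), and then argues that at most one summand can survive because two surviving particles would have nonzero intersection, contradicting the disjointness of the union. You instead intersect $w_\alpha$ with the full particle $p_\alpha$ over \emph{all} indices occurring in $w_\alpha$ and invoke the atom structure of the finite subalgebra generated by $\{r_j:\ j \in K_\alpha\}$ to collapse $w_\alpha \cap p_\alpha$ to $\mathbf{0}$ or $p_\alpha$; the resulting particle is in general strictly smaller than the paper's (it carries all of $K_\alpha$ in its index set rather than only the indices of the surviving summand), but this is harmless for order convergence since the estimate $z_\alpha' \triangle z \leq u_\alpha$ still holds. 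A small advantage of your route is that you explicitly dispose of the degenerate case $z_\alpha' = \mathbf{0}$ via the cofinality argument using $u_\alpha \downarrow \mathbf{0}$ and $z > \mathbf{0}$; the paper asserts that its surviving index set $K_\alpha'$ is a one-point set but in fact only proves it has at most one point, so your treatment of the empty case fills in a detail the paper leaves implicit.
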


\begin{proof}
Since $z \in B_\tau(\mathcal{R})$, by Proposition~\ref{pr:semigen}, there exists a net $(z_\alpha)$ in $B_\tau(\mathcal{R})$ of the form $z_\alpha = \bigsqcup_{k=1}^{m_\alpha} p_{\alpha,k}$, where $p_{\alpha,k} = \bigcap_{i \in I_{\alpha,k}} \epsilon_{\alpha,k,i} r_i$ and $I_{\alpha,k}$ are finite subsets of $I$ and $\epsilon_{\alpha,k,i} = \pm1$, such that $z_\alpha \stackrel{\rm o}{\longrightarrow} z$. Given any $\alpha$, we set
$$
K_\alpha' = \{k \leq m_\alpha: \,\, (\forall i \in I_{\alpha,k}) \,\,\,\, \epsilon_{\alpha,k,i} = \overline{\theta}_i\}; \,\,\,\,\, K_\alpha'' = \{1, \ldots, m_\alpha\} \setminus K_\alpha';
$$
\begin{equation} \label{eq:dghfr0}
z_\alpha' = \bigsqcup_{k \in K_\alpha'} p_{\alpha,k} \,\,\,\,\, \mbox{and} \,\,\,\,\, z_\alpha'' = \bigsqcup_{k \in K_\alpha''} p_{\alpha,k}.
\end{equation}

Observe that $z_\alpha = z_\alpha' \sqcup z_\alpha''$. We show that $\forall \alpha$, $z_\alpha'' \cap z = \mathbf{0}$. Indeed, given any $\alpha$ and $k \in K_\alpha''$, we choose $i_0 \in I_{\alpha,k}$ so that $\epsilon_{\alpha,k,i_0} = - \overline{\theta}_{i_0}$ (the existence of such an index follows from the definition of $K_\alpha''$). Then
$$
p_{\alpha,k} \cap z = z \cap \bigcap_{i \in I_{\alpha,k}} \epsilon_{\alpha,k,i} r_i \leq \overline{\theta}_{i_0} r_{i_0} \cap \epsilon_{\alpha,k,i_0} r_{i_0} = \mathbf{0}.
$$

Hence,
$$
z_\alpha'' \cap z = \bigsqcup_{k \in K_\alpha''} p_{\alpha,k} \cap z = \mathbf{0}.
$$

Therefore,
\begin{equation} \label{eq:dghfr1}
z_\alpha' \cap z = (z_\alpha' \cap z) \cup (z_\alpha'' \cap z) = z_\alpha \cap z \stackrel{\rm o}{\longrightarrow} z \cap z = z.
\end{equation}

On the other hand,
\begin{equation} \label{eq:dghfr2}
z_\alpha' \cap -z \leq z_\alpha \cap -z \stackrel{\rm o}{\longrightarrow} z \cap -z = \mathbf{0}.
\end{equation}

By \eqref{eq:dghfr1} and \eqref{eq:dghfr2}, $z_\alpha' = (z_\alpha' \cap z) \sqcup (z_\alpha' \cap -z) \stackrel{\rm o}{\longrightarrow} z \cup 0 = z$. It remains to observe that, for every $\alpha$, the set $K_\alpha'$ is one-point. Indeed, let $j,k \in K_\alpha'$ with $j \neq k$. Then
$$
p_{\alpha,j} \cap p_{\alpha,k} = \bigcap_{i \in I_{\alpha,j}} \epsilon_{\alpha,j,i} r_i \cap \bigcap_{i \in I_{\alpha,k}} \epsilon_{\alpha,k,i} r_i = \bigcap_{i \in I_{\alpha,j}} \overline{\theta}_i r_i \cap \bigcap_{i \in I_{\alpha,k}} \overline{\theta}_i r_i = \bigcap_{i \in I_{\alpha,j} \cup I_{\alpha,k}} \overline{\theta}_i r_i \neq \mathbf{0}
$$
as a particle, which contradicts the disjointness of the first sum in \eqref{eq:dghfr0}.
\end{proof}

\begin{lemma} \label{le:048uy2}
Let $0 < a,b \in B_\tau(\mathcal{R})$ be lower bounds for $\{\overline{\theta}_i r_i: \, i \in I\}$. Then $a = b$.
\end{lemma}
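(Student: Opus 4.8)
The plan is to exploit Lemma~\ref{le:048uy} together with the observation that both $a$ and $b$ are lower bounds not merely for the generating elements $\{\overline{\theta}_i r_i : i \in I\}$ but for the entire set $\mathcal{P}_\Theta$. Indeed, if $a \leq \overline{\theta}_i r_i$ for every $i \in I$, then for any finite $J \subseteq I$ one has $a \leq \bigcap_{j \in J} \overline{\theta}_j r_j$, so $a \leq p$ for every $p \in \mathcal{P}_\Theta$; the same holds for $b$. This upgrade from the generators to all particles is what makes the squeezing argument below work.

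First I would apply Lemma~\ref{le:048uy} to $z = a$ to obtain a net $(a_\alpha')$ in $\mathcal{P}_\Theta$ with $a_\alpha' \stackrel{\rm o}{\longrightarrow} a$. Unwinding the definition of order convergence, this furnishes a net $(u_\alpha)$ (with the same index set) such that $u_\alpha \downarrow \mathbf{0}$ and $a_\alpha' \triangle a \leq u_\alpha$ for all $\alpha$.

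Next, since $b$ is a lower bound for $\mathcal{P}_\Theta$ and each $a_\alpha'$ belongs to $\mathcal{P}_\Theta$, we have $b \leq a_\alpha'$ for every $\alpha$. Hence
\[
b \setminus a \leq a_\alpha' \setminus a \leq a_\alpha' \triangle a \leq u_\alpha
\]
for all $\alpha$. Thus $b \setminus a$ is a common lower bound of the net $(u_\alpha)$, and since $u_\alpha \downarrow \mathbf{0}$ we conclude $b \setminus a \leq \inf_\alpha u_\alpha = \mathbf{0}$, that is, $b \leq a$. Interchanging the roles of $a$ and $b$ (now applying Lemma~\ref{le:048uy} to $z = b$ and using that $a$ is a lower bound for $\mathcal{P}_\Theta$) yields $a \leq b$, whence $a = b$.

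The argument is short precisely because Lemma~\ref{le:048uy} already carries all the analytic weight: it produces an approximating net lying inside $\mathcal{P}_\Theta$, which is the key structural fact. The only point demanding any care is the passage from ``lower bound for the generators $\overline{\theta}_i r_i$'' to ``lower bound for all particles $p \in \mathcal{P}_\Theta$,'' which follows from finite $\cap$-stability of the family of generators. Once this is in place, the squeeze via the order-null net $(u_\alpha)$ is routine, and I do not expect any genuine obstacle to remain.
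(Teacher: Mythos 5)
Your proof is correct and follows essentially the same route as the paper's: both rest on Lemma~\ref{le:048uy} together with the upgrade from ``lower bound for the generators $\overline{\theta}_i r_i$'' to ``lower bound for every particle in $\mathcal{P}_\Theta$.'' Your one-sided squeeze (obtaining $b \setminus a \leq u_\alpha \downarrow \mathbf{0}$ and then invoking symmetry) is in fact slightly more economical than the paper's version, which approximates $a$ and $b$ simultaneously by two nets and bounds $a \triangle b$ by $u_\alpha \cup v_\beta$ over a product index set.
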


\begin{proof}
Using Lemma~\ref{le:048uy}, choose nets $(x_\alpha)$ and $(y_\beta)$ from $\mathcal{P}_\Theta$ so that $x_\alpha \stackrel{\rm o}{\longrightarrow} a$ and $y_\beta \stackrel{\rm o}{\longrightarrow} b$. Let $(u_\alpha)$ and $(v_\beta)$ be nets in $\mathcal{B}$ so that $x_\alpha \triangle a \leq u_\alpha \downarrow \mathbf{0}$ and $y_\beta \triangle b \leq v_\beta \downarrow \mathbf{0}$. Since $a$ and $b$ are lower bounds for $\{\overline{\theta}_i r_i: \, i \in I\}$, we have that $p \cap a = a$ for all $p \in \mathcal{P}_\Theta$. In particular, $y_\beta \cap a = a$ for all $\beta$. Hence, for all $\alpha$ and $\beta$
\begin{equation} \label{eq:dghfr3}
(x_\alpha \cap y_\beta) \triangle a = (x_\alpha \cap y_\beta) \triangle (a \cap y_\beta) = (x_\alpha \triangle a) \cap y_\beta \leq x_\alpha \triangle a \leq u_\alpha.
\end{equation}

Analogously, for all $\alpha$ and $\beta$
\begin{equation} \label{eq:dghfr4}
(x_\alpha \cap y_\beta) \triangle b = (x_\alpha \cap y_\beta) \triangle (x_\alpha \cap b) = x_\alpha \cap (y_\alpha \triangle b) \leq y_\beta \triangle b \leq v_\beta.
\end{equation}

By \eqref{eq:dghfr3} and \eqref{eq:dghfr4}, for all $\alpha$ and $\beta$
$$
a \triangle b \leq \Bigl( a \triangle (x_\alpha \cap y_\beta) \Bigr) \cup \Bigl( (x_\alpha \cap y_\beta) \triangle b \Bigr) \leq u_\alpha \cup v_\beta.
$$

For any $\alpha$ by \cite[313B(b)]{F3}, $\inf_\beta u_\alpha \cup v_\beta = u_\alpha \cup \inf_\beta v_\beta = u_\alpha$. Hence, $a \triangle b \leq u_\alpha$ for all $\alpha$, which implies $a \triangle b = 0$.
\end{proof}

\begin{proof}[Proof of Theorem~\ref{th:eqcard}]
Observe that (2) is a direct consequence of (1). So, we prove (1). Let $0 < a \in \mathcal{B_\tau(\overline{\mathcal{P}})}$.

(a) $\Rightarrow$ (b). Fix any $i \in I$. Since $(r_i, -r_i)$ is a partition, either $a \cap r_i \neq \mathbf{0}$ or $a \cap - r_i \neq \mathbf{0}$. Therefore, since $a$ is an atom, either $a \leq r_i$ or $a \leq -r_i$. Set $\overline{\theta}_i = 1$ if $a \leq r_i$ and $\overline{\theta}_i = -1$ if $a \leq -r_i$. Thus, signs $(\overline{\theta}_i)_{i \in I}$ are chosen so that $(\forall i \in I) \,\,\,\, a \leq \overline{\theta}_i r_i$, that is, $a$ is a lower bound for $\{\overline{\theta}_i r_i: \, i \in I\}$. Show that $a = \bigcap_{i \in I} \overline{\theta}_i r_i$ (in particular, we show that the intersection exists), that is, $a$ is the greatest upper bound for $\{\overline{\theta}_i r_i: \, i \in I\}$, $a = \inf_{i \in I} \overline{\theta}_i r_i$.

Assume $x \in B_\tau(\mathcal{R})$ is any lower bound for $\{\overline{\theta}_i r_i: \, i \in I\}$. Our goal is to prove that $x \leq a$. Suppose on the contrary that $b = x \setminus a \neq \mathbf{0}$. Since $b \leq x$, we obtain that $b$ is a lower bound for $\{\overline{\theta}_i r_i: \, i \in I\}$. By Lemma~\ref{le:048uy2}, $a = b$ which contradicts the equality $b = x \setminus a$.

(b) $\Rightarrow$ (a). Assume $a = \bigcap_{i \in I} \overline{\theta}_i r_i$ and prove (a). Let $0 < b \in \mathcal{B_\tau(\mathcal{R})}$ be such that $b \leq a$. Then both $a, b$ are lower bounds for $\{\overline{\theta}_i r_i: \, i \in I\}$. By Lemma~\ref{le:048uy2}, $a = b$ and hence, $a$ is an atom in $\mathcal{B_\tau(\mathcal{R})}$.
\end{proof}

\begin{remark} \label{rem:xbbd}
Example~\ref{ex:ffk2} shows that:
\begin{enumerate}
  \item a subsequence of an atomless countable pre-Rademacher family need not be atomless;
  \item one cannot equivalently extend the claim of item (2)(b) in Theorem~\ref{th:eqcard} to any infinite intersection $\bigcap_{j \in J} \theta_j r_j = \mathbf{0}$ as far as in (R2).
\end{enumerate}
\end{remark}

The following example shows that the last possibility in item (b) of (2) in Theorem~\ref{th:eqcard} that $\bigcap_{i \in I} \theta_i r_i$ does not exist, can sometimes happen.

\begin{ex} \label{ex:ssjhd}
There exist a Boolean algebra $\mathcal{B}_0$ and a pre-Rademacher family $(s_n)_{n=1}^\infty$ in $\mathcal{B}$ with the following properties:
\begin{enumerate}
  \item[(i)] every subsequence of $(s_n)_{n=1}^\infty$ is an atomless pre-Rademacher family;
  \item[(ii)] the intersection $\bigcap_{n \in M} s_n$ does not exist for every infinite subset $M \subseteq \mathbb N$.
\end{enumerate}
\end{ex}

\begin{proof}
Let $\mathcal{B}_0$ be the subalgebra of $\widehat{\mathcal{B}}$ generated by the usual Rademacher family $(\widehat{r}_n)_{n=1}^\infty$. Fix any irrational number $\alpha \in (0,1)$ and choose a sequence $(D_n)_{n=1}^\infty$ of intervals $[a,b) \subseteq [\alpha,1)$, $a < b$ such that
\begin{enumerate}
  \item $\displaystyle{[\alpha,1) = \bigsqcup_{k=0}^{2^n-1} D_{2^n+k}}$ for all $n = 0,1, \ldots$;
  \item $\displaystyle{D_{2^n+k} = D_{2^{n+1}+2k-1} \sqcup D_{2^{n+1}+2k}}$ for all $n = 0,1, \ldots$ and $k = 0, \ldots, 2^n-1$;
  \item the endpoints of $D_n$ are dyadic numbers and $\alpha$ ordered in such a way that $\alpha = \min D_{2^n}$, $\sup D_{2^n+k} = \min D_{2^n+k+1}$ and $\sup D_{2^{n+1}-1} = 1$ for every $n = 0, 1, \ldots$ and $0 \leq k \leq 2^n - 2$;
  \item $\displaystyle{\lim_n \max_{0 \leq k < 2^n} \mu \bigl(D_{2^n+k}\bigr) = 0}$.
\end{enumerate}

Then set
$$
s_n = \widehat{[0,\alpha)} \sqcup \bigsqcup_{j=1}^{2^{n-1}} \widehat{D}_{2^n+2j-1}, \,\,\, n = 1,2, \ldots.
$$

Since $\alpha = \min D_{2^n}$, the union $\widehat{[0,\alpha)} \sqcup \widehat{D}_{2^n}$ is a union of dyadic intervals, and so $s_n \in \mathcal{B}_0$ for all $n \in \mathbb N$. By (1)-(2), $(s_n)_{n=1}^\infty$ is a pre-Rademacher family. First we prove (ii). Let $M \subseteq \mathbb N$ be an infinite subset. Let $0 \leq z \in \mathcal{B}_0$ be any lower bound for $\{s_n: \, n \in M\}$. Since $z \in \mathcal{B}_0$, one has that $z = \bigsqcup_{j \in J} \widehat{I}_n^j$ for suitable $n \in \mathbb N$ and $J \subseteq \{0,\ldots,2^n-1\}$. By (4), $I_n^j \subseteq [0,\alpha)$ for all $j \in J$. Since $\alpha$ is irrational and $J$ is finite, there exists a dyadic number $k/2^m$ with
$$
\max_{j \in J} \max I_n^j < \frac{k}{2^m} < \alpha.
$$
Thus, $z < \widehat{[0,k/2^m)}$ and $\widehat{[0,k/2^m)}$ is a lower bound for $\{s_n: \, n \in M\}$ in $\mathcal{B}_0$, and so (ii) is proved. Finally, (i) follows from (ii) and item (2) of Theorem~\ref{th:eqcard}.
\end{proof}

$\,\,\,\,\,\,$\textbf{Dyadic finitely additive measure on the algebra generated by a pre-Rademacher family.} \begin{thm} \label{thm:dyadicfam}
Let $\mathcal{R} = (r_i)_{i \in I}$ be a pre-Rademacher family in a Boolean algebra $\mathcal{B}$. Then there is a unique finitely additive measure $\mu: \mathcal{B}(\mathcal{R}) \to [0,1]$ satisfying \eqref{eq:main}.
\end{thm}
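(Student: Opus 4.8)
The plan is to construct $\mu$ explicitly on particles and then extend it by finite additivity, leaning on the description of $\mathcal{B}(\mathcal{R})$ supplied by Proposition~\ref{pr:semigen}. First I would observe that $\mathcal{B}(\mathcal{R}) = \mathcal{B}(\overline{\mathcal{P}})$: each $r_i$ is a one-index particle, so $\mathcal{R} \subseteq \overline{\mathcal{P}} \subseteq \mathcal{B}(\mathcal{R})$, whence the two generated subalgebras coincide. By Proposition~\ref{pr:semigen}(1), every $x \in \mathcal{B}(\mathcal{R})$ is then a finite disjoint union of particles (together, possibly, with $\mathbf{0}$). On a single particle $p = \bigcap_{j \in J} \theta_j r_j$ I would set $\mu(p) = 2^{-|J|}$, as forced by \eqref{eq:main}. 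This prescription is consistent because the representing pair $(J,(\theta_j)_{j \in J})$ is uniquely determined by $p$: if $\bigcap_{j \in J} \theta_j r_j = \bigcap_{k \in K} \eta_k r_k$, then applying the containment argument from the proof of Proposition~\ref{pr:jfggdd5567} in both directions gives $K \subseteq J$, $J \subseteq K$ and agreement of signs, so $J = K$ and in particular $|J|$ is unambiguous.

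The core step is to define $\mu$ on all of $\mathcal{B}(\mathcal{R})$ by $\mu\bigl(\bigsqcup_{m=1}^N p_m\bigr) = \sum_{m=1}^N \mu(p_m)$ and to check that this value is independent of the chosen disjoint representation. The key device is a common refinement over a single large finite index set. Given any finite collection of particles, let $J_0$ be the (finite) union of all the index sets occurring and put $n_0 = |J_0|$. By \eqref{eq:beg}, the $2^{n_0}$ full particles $\bigcap_{j \in J_0} \sigma_j r_j$ over $J_0$ form a partition of $\mathbf{1}$, each of $\mu$-value $2^{-n_0}$. A particle $p$ over $J \subseteq J_0$ is precisely the disjoint union of the $2^{n_0 - |J|}$ full particles extending it, and $2^{n_0 - |J|} \cdot 2^{-n_0} = 2^{-|J|} = \mu(p)$, so $\mu(p)$ equals the total value of its refining full particles. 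Consequently, once $J_0$ absorbs all index sets appearing in $x = \bigsqcup_m p_m$, one gets $\sum_m \mu(p_m) = 2^{-n_0} \cdot c(x)$, where $c(x)$ is the number of full particles over $J_0$ contained in $x$ (each such full particle lies in exactly one $p_m$, by disjointness). Since $c(x)$ depends only on $x$, any two representations, refined over a $J_0$ large enough for both, yield the same value, and $\mu$ is well defined.

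The remaining properties are then routine. Finite additivity is immediate: if $x,y$ are disjoint with $x = \bigsqcup_m p_m$ and $y = \bigsqcup_l q_l$, concatenating the two lists represents $x \sqcup y$, so $\mu(x \sqcup y) = \mu(x) + \mu(y)$. Taking $x = \mathbf{1}$, which is the particle over $J = \emptyset$, gives $\mu(\mathbf{1}) = 2^0 = 1$; nonnegativity is built into the definition, and monotonicity of a nonnegative finitely additive set function yields $\mu(x) \leq \mu(\mathbf{1}) = 1$, so $\mu$ indeed takes values in $[0,1]$. Equation \eqref{eq:main} holds by construction. For uniqueness, any finitely additive $\nu \colon \mathcal{B}(\mathcal{R}) \to [0,1]$ satisfying \eqref{eq:main} agrees with $\mu$ on every particle, hence on every finite disjoint union of particles by additivity, hence on all of $\mathcal{B}(\mathcal{R})$.

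I would expect the well-definedness of the extension, namely the independence of $\sum_m \mu(p_m)$ from the representation, to be the only genuine obstacle. The common-refinement argument over a single finite $J_0$ is what resolves it, reducing the whole matter to counting full particles inside a finite partition of unity, after which additivity, normalization and uniqueness fall out formally.
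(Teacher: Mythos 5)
Your proposal is correct, but it verifies the crucial consistency step by a genuinely different device than the paper. The paper also starts from $\mu(p) = 2^{-|J|}$ on the particle semialgebra $\overline{\mathcal{P}}$, but it then proves finite additivity \emph{on the semialgebra} (i.e., that whenever a particle equals a finite disjoint union of particles the dyadic values add up) by induction on $m = \bigl| \bigcup_{n} J_n \setminus J \bigr|$, splitting the particle by $r_{i_0}$ and $-r_{i_0}$ for an index $i_0$ in the excess set; it then delegates the passage from $\overline{\mathcal{P}}$ to $\mathcal{B}(\mathcal{R})$ entirely to the standard extension theorem for finitely additive measures on semialgebras (\cite[Proposition~1.3.10]{Bog}). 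You instead refine every particle over a common finite index set $J_0$ into full particles, observe via \eqref{eq:beg} that these form a partition of $\mathbf{1}$ with each particle over $J \subseteq J_0$ splitting into exactly $2^{|J_0|-|J|}$ of them, and reduce well-definedness on all of $\mathcal{B}(\mathcal{R})$ to counting full particles inside $x$ --- which simultaneously yields additivity on $\overline{\mathcal{P}}$ and the extension, with no appeal to an external extension theorem and no induction. Your route is more self-contained and arguably more transparent (it is the classical cylinder-set computation for product measures); the paper's route is shorter on the page because the extension step is outsourced, at the cost of the somewhat delicate induction $(T_m)$. One small point you handle that the paper leaves implicit is the uniqueness of the representing pair $(J,(\theta_j)_{j\in J})$ of a particle, which is indeed needed for $\mu(p)=2^{-|J|}$ to be well defined and which follows, as you say, from the containment argument in the proof of Proposition~\ref{pr:jfggdd5567}.
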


\begin{proof}
Let $\overline{\mathcal{P}}$ be the particle semialgebra with respect to $\mathcal{R}$. We prove that the map $\mu: \overline{\mathcal{P}} \to [0,1]$ defined by \eqref{eq:main} is a finitely additive measure (this will be enough by the well known fact \cite[Proposition~1.3.10]{Bog}). Let $J, J_n$ be finite subsets of $I$ for all $n \in N \subset \mathbb N$ and $\theta_j, \theta_{n,i} \in \{-1,1\}$ for all indices, where $N$ is finite. Assume
\begin{equation} \label{eq:zmnfn1}
\bigcap_{j \in J} \theta_j r_j = \bigsqcup_{n \in N} \bigcap_{j \in J_n} \theta_{n,j} r_j
\end{equation}
and prove
\begin{equation} \label{eq:zmnfn2}
\frac{1}{2^{|J|}} = \sum_{n \in N} \frac{1}{2^{|J_n|}}.
\end{equation}

Observe that
\begin{equation} \label{eq:zmnfn3}
J = \bigcap_{n \in N} J_n \,\,\, \mbox{and} \,\,\, \theta_j = \theta_{n,j} \,\,\, \mbox{for all} \,\,\, n \in N \,\,\, \mbox{and} \,\,\, j \in J.
\end{equation}

Indeed, for every given $j_0 \in J$ we intersect both parts of \eqref{eq:zmnfn1} by $\theta_{j_0} r_{j_0}$ and get
\begin{equation} \label{eq:zmnfn4}
\bigcap_{j \in J} \theta_j r_j = \bigsqcup_{n \in N} \Bigl( \bigcap_{j \in J_n} \theta_{n,j} r_j \cap \theta_{j_0} r_{j_0} \Bigr)
\end{equation}

Then the equality of the right hand sides of \eqref{eq:zmnfn1} and \eqref{eq:zmnfn4} gives that $\bigcap_{j \in J_n} \theta_{n,j} r_j = \bigcap_{j \in J_n} \theta_{n,j} r_j \cap \theta_{j_0} r_{j_0}$ for all $n \in N$, which yields that $j_0 \in J_n$ and $\theta_{j_0} = \theta_{n,j_0}$ for all $n \in N$. Thus, we have proved the inclusion $J \subseteq \bigcap_{n \in N} J_n$ and the desired equality of signs. It remains to show the converse inclusion. Let $j_0 \in \bigcap_{n \in N} J_n$. Intersect both parts of \eqref{eq:zmnfn1} by $\theta_{j_0} r_{j_0}$ and get
\begin{equation} \label{eq:zmnfn4}
\bigcap_{j \in J} \theta_j r_j \cap \theta_{j_0} r_{j_0} = \bigsqcup_{n \in N} \bigcap_{j \in J_n} \theta_{n,j} r_j
\end{equation}
which implies that $j_0 \in J$. Thus, \eqref{eq:zmnfn3} is proved.

Now we prove by induction in $m \in \mathbb N$ the following statement
\begin{enumerate}
  \item[$(T_m)$:] for every finite subsets $N \subset \mathbb N$, $J, J_n \subset I$, $n \in N$ such that $\Bigl| \bigcup_{n \in N} J_n \setminus J \Bigr| = m$, and every collection of signs $\theta_j, \theta_{n,i} \in \{-1,1\}$, \eqref{eq:zmnfn1} implies \eqref{eq:zmnfn2}.
\end{enumerate}

Note that the validity of $(T_m)$ for all $m = 0,1, \ldots$ means the finite additivity of $\mu$ on $\overline{\mathcal{P}}$.

If $m = 0$ then by \eqref{eq:zmnfn3}, $J = J_n$ for every $n \in N$, and hence $|N| = 1$ by the disjointness of the sum in \eqref{eq:zmnfn1}. This yields \eqref{eq:zmnfn2}.

Assume $(T_m)$ is true for $m = k$ and prove it for $m = k+1$. Fix any $i_0 \in \bigcup_{n \in N} J_n \setminus J$ (such an index exists because $k+1 > 0$). Now set
\begin{align*}
N' = \{n \in N: \,\, i_0 \notin J_n\}, \,\,\, &N^+ = \{n \in N: \,\, (i_0 \in J_n) \, \& (\theta_{n,i_0} = 1)\}, \\
&N^- = \{n \in N: \,\, (i_0 \in J_n) \, \& (\theta_{n,i_0} = -1)\}.
\end{align*}

Then for $p = \bigcap_{j \in J} \theta_j r_j$ one has
\begin{align} \label{eq:fnnjdjls81}
r_{i_0} \cap p = \Bigl( \bigsqcup_{n \in N'} r_{i_0} \cap \bigcap_{j \in J_n} \theta_{n,j} r_j \Bigr) \sqcup \Bigl( \bigsqcup_{n \in N^+} \bigcap_{j \in J_n} \theta_{n,j} r_j \Bigr);
\end{align}
and
\begin{align} \label{eq:fnnjdjls82}
-r_{i_0} \cap p = \Bigl( \bigsqcup_{n \in N'} -r_{i_0} \cap \bigcap_{j \in J_n} \theta_{n,j} r_j \Bigr) \sqcup \Bigl( \bigsqcup_{n \in N^-} \bigcap_{j \in J_n} \theta_{n,j} r_j \Bigr).
\end{align}

Since
$$
\Bigl| \Bigl( \{i_0\} \cup \bigcup_{n \in N' \cup N^+} \Bigr) \setminus \bigl( \{i_0\} \cup J \bigr)  \Bigr| \leq k \,\,\,\,\, \mbox{and} \,\,\,\,\, \Bigl| \Bigl( \{i_0\} \cup \bigcup_{n \in N' \cup N^-} \Bigr) \setminus \bigl( \{i_0\} \cup J \bigr)  \Bigr| \leq k,
$$
by the induction assumption, \eqref{eq:fnnjdjls81} and \eqref{eq:fnnjdjls81} imply
\begin{align} \label{eq:fnnjdjls83}
\frac{1}{2^{|J|+1}} = \sum_{n \in N'} \frac{1}{2^{|J_n|+1}} + \sum_{n \in N^+} \frac{1}{2^{|J_n|}}
\end{align}
and
\begin{align} \label{eq:fnnjdjls84}
\frac{1}{2^{|J|+1}} = \sum_{n \in N'} \frac{1}{2^{|J_n|+1}} + \sum_{n \in N^-} \frac{1}{2^{|J_n|}}.
\end{align}

Taking into account that $p = (r_{i_0} \cap p) \sqcup (-r_{i_0} \cap p)$, summing up \eqref{eq:fnnjdjls83} and \eqref{eq:fnnjdjls84} we obtain \eqref{eq:zmnfn2}. So, $(T_m)$ is proved for every $m$, and thus $\mu$ is finitely additive on $\overline{\mathcal{P}}$.
\end{proof}

\begin{defn} \label{def:dyadic}
Let $\mathcal{R} = (r_i)_{i \in I}$ be a pre-Rademacher family in a Boolean algebra $\mathcal{B}$. The finitely additive measure $\mu: \mathcal{B}(\mathcal{R}) \to [0,1]$ satisfying \eqref{eq:main} is called the \emph{dyadic measure} on the subalgebra $\mathcal{B}(\mathcal{R})$ generated by $\mathcal{R}$.
\end{defn}

At first glance, it is a striking fact that in the most natural cases the dyadic measure on $\mathcal{B}(\mathcal{R})$ is not countably additive (see the remark before the proof of Theorem~\ref{th:ccvmn740}).

$\,\,\,\,\,\,$\textbf{Maximal pre-Rademacher families.} \begin{defn}
A pre-Rademacher family $\mathcal{R}$ in a Boolean algebra $\mathcal{B}$ is said to be \emph{maximal} if there is no pre-Rademacher family in $\mathcal{B}$ including $\mathcal{R}$.
\end{defn}

Using Zorn's lemma, one can easily prove that every pre-Rademacher family can be extended to a maximal pre-Rademacher family. However it is surprisingly that the maximality is a bad property if one wants to define a measure by a pre-Rademacher family.

The following fact is not obvious.

\begin{thm} \label{th:kjhgf5}
The usual Rademacher family $(\widehat{r}_n)_{n=1}^\infty$ on $[0,1)$ is a complete pre-Rademacher family in the quotient Boolean algebra $\widehat{\mathcal{B}}$ modulo measure null sets of the Borel $\sigma$-algebra on $[0,1)$ failing to be maximal. Moreover, for every $\gamma \in (0,1)$ there exists an element $\widehat{r}_0 \in \widehat{\mathcal{B}}$ of measure $\mu(\widehat{r}_0) = \gamma$ such that $(\widehat{r}_n)_{n=0}^\infty$ is a pre-Rademacher family in $\widehat{\mathcal{B}}$.
\end{thm}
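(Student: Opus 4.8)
The first assertion requires almost nothing new: properties (R1) and (R4) of the usual Rademacher family were already recorded right after \eqref{eqzxxxm}, so $\widehat{\mathcal R} = (\widehat r_n)_{n=1}^\infty$ is a complete pre-Rademacher family in $\widehat{\mathcal B}$. Both the failure of maximality and the stronger ``moreover'' clause will follow at once if, for an arbitrary $\gamma \in (0,1)$, I exhibit an element $\widehat r_0 \in \widehat{\mathcal B}$ with $\mu(\widehat r_0) = \gamma$, distinct from every $\widehat r_n$, for which the enlarged family $(\widehat r_n)_{n=0}^\infty$ is still pre-Rademacher. The whole proof thus reduces to constructing such an $\widehat r_0$.

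\emph{Reduction to dyadic intervals.} Since $(\widehat r_n)_{n\geq 1}$ is pre-Rademacher, every particle formed from indices $n\geq 1$ is nonzero, so the only particles that could vanish in the enlarged family are those of the shape $\theta_0\widehat r_0 \cap p$, where $\theta_0 \in \{-1,1\}$ and $p = \bigcap_{j\in J}\theta_j\widehat r_j$ is a particle of $\widehat{\mathcal R}$ (with $J\subset\mathbb N$ finite). Hence it suffices to arrange $0 < \mu(\widehat r_0 \cap p) < \mu(p)$ for every such $p$, i.e. that both $\widehat r_0$ and $-\widehat r_0$ meet every original particle. Now $r_j$ depends only on the $j$-th binary digit of $x$, so, taking $n=\max J$, the particle $p$ is a nonempty disjoint union of dyadic intervals of length $2^{-n}$ and in particular contains some $I_n^k$. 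Therefore, if a Borel representative $r_0\subseteq[0,1)$ satisfies
$$0 < \mu(r_0 \cap I) < \mu(I) \quad\text{for every dyadic interval } I=I_n^k,$$
then automatically $0 < \mu(r_0\cap p) < \mu(p)$ for every particle $p$, and $\widehat r_0 = {\rm Co}\,r_0$ is as required.

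\emph{Construction of $r_0$ (the crux).} It remains to produce a Borel set $r_0\subseteq[0,1)$ of measure exactly $\gamma$ that is simultaneously thick and co-thick in every subinterval. For this I would use a fat Cantor (Smith--Volterra--Cantor) construction: delete from $[0,1)$ a nested sequence of ``middle'' open intervals whose lengths decrease geometrically, with the deletion ratios tuned so that the total deleted length equals $1-\gamma$; any prescribed value in $(0,1)$ is attainable this way. The surviving set $r_0$ is closed, nowhere dense, and has $\mu(r_0)=\gamma$. Nowhere-density yields, inside any interval $I$, a subinterval disjoint from $r_0$, whence $\mu(r_0\cap I)<\mu(I)$; and the defining feature of the scheme---at each stage $I$ contains one of the finitely many surviving closed intervals, within which the construction leaves positive measure---yields $\mu(r_0\cap I)>0$. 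As $r_0$ contains no interval it differs from each $\widehat r_n$, so the extension is proper.

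The reduction steps are routine; the one genuinely delicate point is the construction of $r_0$, where the total measure must be pinned to $\gamma$ while the relative measure is kept strictly inside $(0,1)$ on every dyadic interval simultaneously. The fat-Cantor scheme handles both at once---the measure through the geometric deletion ratios and the two-sided thickness as a built-in property---so I expect this to be the main, though standard, obstacle.
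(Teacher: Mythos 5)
Your reduction to the condition $0<\mu(r_0\cap I)<\mu(I)$ for every dyadic interval $I$ is correct and is exactly the reduction the paper makes (the paper phrases it as: both $r_0$ and $[0,1)\setminus r_0$ must be ``crushed'', i.e.\ neither contains a dyadic interval up to a null set). The gap is in the construction of $r_0$: a single fat Cantor set cannot satisfy this condition, and your own argument shows why. You use nowhere-density to find, inside any interval, a subinterval $J$ disjoint from $r_0$; but $J$ contains a dyadic interval $I'$, and for that $I'$ you get $\mu(r_0\cap I')=0$, violating the left-hand inequality. Concretely, if the first deleted middle interval is $\bigl(\tfrac38,\tfrac58\bigr)$, then the dyadic interval $I_4^8=\bigl[\tfrac{7}{16},\tfrac12\bigr)$ lies entirely in the gap, and $I_4^8$ is itself a particle $\bigcap_{j=1}^4\theta_j\widehat r_j$ of the original family, so the particle $\widehat r_0\cap\bigcap_{j=1}^4\theta_j\widehat r_j$ would be $\mathbf 0$ and (R1) fails for the enlarged family. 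The claim that ``at each stage $I$ contains one of the finitely many surviving closed intervals'' is simply false for small dyadic intervals sitting inside a deleted gap; indeed no closed nowhere dense set (nor any set whose complement contains an interval) can meet every dyadic interval in positive measure while also missing a positive-measure piece of each.

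What is actually needed, and what the paper does, is a countable family of pairwise disjoint small Cantor-type sets $A_n,B_n$ indexed by an enumeration $(I_n)$ of \emph{all} dyadic intervals, with $A_n,B_n\subseteq I_n$ and $\mu(A_n)\le 2^{-n}\gamma$, $\mu(B_n)\le 2^{-n}(1-\gamma)$, constructed recursively so that at stage $k+1$ one works inside a dyadic subinterval of $I_{k+1}$ avoiding the previously built sets (this uses that a finite union of Cantor-type sets is again crushed). Putting $A_0=\bigsqcup A_n$, $B_0=\bigsqcup B_n$ and then splitting the leftover $[0,1)\setminus(A_0\sqcup B_0)$ into two pieces of the right measures, one obtains $r_0=A_0\sqcup A'$ with $\mu(r_0)=\gamma$ such that every dyadic interval $I_n$ contains $A_n\subseteq r_0$ and $B_n\subseteq [0,1)\setminus r_0$, both of positive measure. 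The extra layer of bookkeeping (one nowhere dense set per dyadic interval, plus the measure adjustment at the end) is not optional; it is the essential content of the proof, and your proposal omits it.
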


\begin{proof}
The fact that $(\widehat{r}_n)_{n=1}^\infty$ is a complete pre-Rademacher family in $\widehat{\mathcal{B}}$ is considered to be obvious. We show that it is not maximal. To prove the lack of maximality, we need a special terminology. For convenience of notation, we deal with the Borel $\sigma$-algebra $\mathcal{B}$ on $[0,1)$ and $(r_n)_{n \in \mathbb N}$. By $\mu$ we denote the Lebesgue measure on $\mathcal{B}$. We say that a set $A \in \mathcal{B}$ of positive measure is \emph{crushed} if $A$ includes no dyadic interval, up to a measure null set. In other words, a set $A$ is crushed if and only if $\mu(I \setminus A) > 0$ for every dyadic interval $I$ (which is the same as for every interval $I = (\alpha, \beta)$ with $0 \leq \alpha < \beta \leq 1$). Our goal is to construct a borel set $r_0 \subseteq [0,1)$ with $\mu(r_0) = \gamma$ such that both $r_0$ and $[0,1) \setminus r_0$ are crushed. Consequently, both $r_0$ and $[0,1) \setminus r_0$ intersect any dyadic interval by a set of positive measure. It would be enough, because the extended family $(r_n)_{n=0}^\infty$ is a pre-Rademacher family (observe that any particle with respect to $(r_n)_{n=0}^\infty$ includes some dyadic interval). We wonder if such a construction is already described in the literature, however knowing no citation we provide with a complete proof below.

Enumerate by $(I_m)_{m=1}^\infty$ all the dyadic intervals in the natural order $I_m = I_n^k$, where $m = 2^n+k-1$, $n = 0,1,2, \ldots$, $k = 1, \ldots, 2^n$. We say that a subset $A \subseteq [0,1)$ is a \emph{Cantor type set} $I$ if $A$ is crushed and the complement $[0,1) \setminus A$ is a finite or countable union of dyadic intervals. We remark that an at most countable union of dyadic intervals is also a disjoint union of dyadic intervals, as one can easily prove. Obviously, if $A$ is a Cantor type set, $I$ a dyadic interval and $A \subseteq I$ then the complement $I \setminus A$ is a finite or countable union of dyadic intervals. We need the following lemma.

\begin{lemma} \label{le:fut}
$\,$
\begin{enumerate}
  \item For any dyadic interval $I \subseteq [0,1)$ and any $\varepsilon > 0$ there exists a Cantor type set $A \subseteq I$ subset of measure $0 < \mu(A) \leq \varepsilon$.
  \item Let $A$ be a Cantor type set. Then for any dyadic interval $I$ the set $\mu(A \cap I) = 0$ is a Cantor type set.
  \item A finite disjoint union of Cantor type sets is a Cantor type set.
\end{enumerate}
\end{lemma}

\begin{proof}[Proof of Lemma~\ref{le:fut}]
(1) is easily proved by the well known construction of the Cantor set. (2) easily follows from the definitions. We prove (3). It is enough to prove for the union of two sets. So, let $A$ and $B$ be disjoint Cantor type sets. First we prove that $A \sqcup B$ is crushed. Assume, on the contrary, that this is not the case and there is a dyadic interval $I \subseteq A \sqcup B$, up to a measure null set $C$, that is, $I \subseteq A \sqcup B \sqcup C$. With no loss of generality we may and do assume that $I \subseteq A \sqcup B$, because the set $A \sqcup C$ is a Cantor type set as well.
Observe that $I = A' \sqcup B'$ where $A' = A \cap I$ and $B' = B \cap I$. By (2), $A'$ and $B'$ are Cantor type sets. So, there is a subset $J \subseteq \mathbb N$ such that $I \setminus A' = \bigsqcup_{j \in J} I_j$. On the other hand, $I \setminus A' = B'$. Hence, $B'$ contains dyadic intervals, which is impossible because $B'$ is crushed. Finally we show that $[0,1) \setminus (A \sqcup B)$ is an at most countable union of dyadic intervals. Indeed, let $[0,1) \setminus A = \bigsqcup_{i \in J_A} I_i$ and $[0,1) \setminus B = \bigsqcup_{j \in J_B} I_j$. Then
$$
[0,1) \setminus (A \sqcup B) = \bigl( [0,1) \setminus A \bigr) \cap \bigl( [0,1) \setminus B \bigr) = \Bigl( \bigsqcup_{i \in J_A} I_i \Bigr) \cap \Bigl( \bigsqcup_{j \in J_B} I_j \Bigr) = \bigsqcup_{i \in J_A} \bigsqcup_{j \in J_B} I_i \cap I_j.
$$
\end{proof}

Continue the proof of Theorem~\ref{th:kjhgf5}. We construct two disjoint sequences $(A_n)_{n=1}^\infty$ and $(B_n)_{n=1}^\infty$ of subsets of $[0,1)$ such that for all $n \in \mathbb N$
\begin{enumerate}
  \item $A_n$ and $B_n$ are Cantor type subsets of $I_n$;
  \item $A_i \cap B_j = \emptyset$ for all $i,j$;
  \item $\mu(A_n) \leq 2^{-n} \gamma$ and $\mu(B_n) \leq 2^{-n} (1 - \gamma)$.
\end{enumerate}

At the first step we choose any Cantor type set $A_1$ in $I_1$ with $0 < \mu(A_1) \leq \gamma/2$. Then $I_1 \setminus A_1 = \bigsqcup_{j \in J_1} I_j$ for some index set $J_1 \subseteq \mathbb N$, which is nonempty because $\mu(A_1) > 0$. Set $j_1 = \min J_1$ and choose any Cantor type set $B_1$ in $I_1$ with $0 < \mu(B_1) \leq (1 - \gamma)/2$.

Assume disjoint sets $A_1, \ldots, A_k$ and $B_1, \ldots, B_k$ are chosen to satisfy (1), (2), (3) for all $n = 1, \ldots, k$. By Lemma~\ref{le:fut} (3), $C = \bigsqcup_{n=1}^k A_k \sqcup \bigsqcup_{n=1}^k B_k$ is a Cantor type set, and by Lemma~\ref{le:fut} (2), $I_{k+1} \cap C$ is a Cantor type set as well. Choose any dyadic interval $I \subseteq I_{k+1} \setminus C$. As at the first step, choose disjoint Cantor type sets $A_{k+1}, B_{k+1} \subseteq I$ possessing (3) for $n = k+1$. The recursive construction is finished.

Now set $A_0 = \bigsqcup A_n$ and $B_0 = \bigsqcup B_n$. Then $\mu(A_0) = \sum_{n=1}^\infty \mu(A_n) \leq \sum_{n=1}^\infty 2^{-n} \gamma = \gamma$ and analogously $\mu(B_0) \leq 1 - \gamma$. Decompose $[0,1) \setminus (A_0 \sqcup B_0) = A' \sqcup B'$ onto measurable subsets of measures $\mu(A') = \gamma - \mu(A_0)$ and $\mu(B') = 1 - \gamma - \mu(B_0)$ and set $A = A_0 \sqcup A'$ and $B = B_0 \sqcup B'$. Then $[0,1) = A \sqcup B$ with $\mu(B) = \gamma$. Show that $A$ is crushed. Indeed, the inclusion $I_n \subseteq A$, up to a measure null set, is false, because $B_n \subseteq I_n \cap B_0 \subseteq I_n \cap B$. Analogously, $B$ is crushed. Setting $r = A$, we finish the proof.
\end{proof}

Remark that the constructed above extended pre-Rademacher family $(\widehat{r}_n)_{n=0}^\infty$ cannot define a countably additive measure on the Borel $\sigma$-algebra $\mathcal{\widehat{B}}$ by \eqref{eq:main} if $\gamma \neq 1/2$. Indeed, if such a measure $\widehat{\mu}$ existed, by the one hand, \eqref{eq:main} would imply that $\widehat{\mu}(\widehat{r}_0) = 1/2$. But on the other hand, $\widehat{\mu}$ must coincide with the Lebesgue measure on $\mathcal{\widehat{B}}$ because both measures have the same values at dyadic intervals. Hence, $\widehat{\mu}(\widehat{r}_0) = \gamma$, a contradiction.

Using the scheme of the proof of Theorem~\ref{th:kjhgf5}, one can prove the following general fact.

\begin{thm} \label{th:kjhgf7}
Let $\mathcal{B}$ be an atomless Boolean Dedekind $\sigma$-complete Boolean algebra. Then any countable pre-Rademacher family in $\mathcal{B}$ is not maximal.
\end{thm}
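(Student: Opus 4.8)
The plan is to follow the construction in the proof of Theorem~\ref{th:kjhgf5}, reading ``dyadic interval'' as ``particle'' throughout, after reducing the statement to a single existence assertion. Let $\mathcal{R} = (r_n)_{n=1}^\infty$ be a countable pre-Rademacher family with particle set $\mathcal{P}$. Adjoining a new element $r_0 \in \mathcal{B}$ produces the family $(r_n)_{n=0}^\infty$, each of whose particles is either a particle of $\mathcal{R}$ or has the form $\theta_0 r_0 \cap p$ with $p$ a particle of $\mathcal{R}$ (including $p = \mathbf{1}$). Hence $(r_n)_{n=0}^\infty$ is again pre-Rademacher if and only if $r_0$ \emph{splits every particle}, that is $\mathbf{0} \neq r_0 \cap p \neq p$ for all $p \in \mathcal{P}$. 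Calling an element $x$ \emph{crushed} when no particle lies below it, this says exactly that both $r_0$ and $-r_0$ are crushed. Thus the whole theorem reduces to: in an atomless Dedekind $\sigma$-complete $\mathcal{B}$ there exists $r_0$ with both $r_0$ and $-r_0$ crushed, which exhibits $\mathcal{R}$ as non-maximal.

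Next I would set up the abstract analogue of Lemma~\ref{le:fut}. Call $x$ \emph{thin} if for every particle $p$ there is a particle $p' \leq p$ with $p' \cap x = \mathbf{0}$; note thin implies crushed. Two formal properties replace Lemma~\ref{le:fut}(2),(3) and are immediate: if $x$ is thin then $x \cap p$ is thin for any particle $p$, and a finite union of thin elements is thin (chain the witnesses down the tree). The analogue of Lemma~\ref{le:fut}(1) is the only substantial point: \emph{inside every particle $q$ there is a nonzero thin element}. Granting it, the recursion runs as in Theorem~\ref{th:kjhgf5}: enumerate $\mathcal{P} = (q_m)_{m=1}^\infty$ and build pairwise disjoint thin $A_m, B_m \leq q_m$. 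At stage $m$ the union $C = \bigsqcup_{i<m}(A_i \sqcup B_i)$ is thin, so some particle $q' \leq q_m$ satisfies $q' \cap C = \mathbf{0}$; splitting $q'$ into two sub-particles $q' \cap r_N$ and $q' \cap (-r_N)$ for a large fresh index $N$ and applying the existence statement inside each gives disjoint nonzero thin $A_m, B_m$ avoiding $C$. Finally set $r_0 = \bigsqcup_m A_m$, which exists by Dedekind $\sigma$-completeness.

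The verification is then routine: since all the $A_i, B_i$ are pairwise disjoint, $B_m \cap r_0 = \mathbf{0}$, so $A_m \leq r_0 \cap q_m$ and $B_m \leq (-r_0) \cap q_m$ are both nonzero. As every particle occurs as some $q_m$, both $r_0$ and $-r_0$ are crushed, whence $(r_n)_{n=0}^\infty$ is a pre-Rademacher family properly containing $\mathcal{R}$, and $\mathcal{R}$ is not maximal.

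The hard part is the existence of a nonzero thin element inside a given particle $q$, and this is precisely where the concrete argument exploited the positive Lebesgue measure of a fat Cantor set; here no measure is available, so nonzero-ness must be forced from atomlessness of $\mathcal{B}$ together with $\sigma$-completeness. I would split according to Theorem~\ref{th:eqcard}. If some branch $(\overline{\theta}_i)$ extending the signs of $q$ has $a = \bigcap_i \overline{\theta}_i r_i \neq \mathbf{0}$, then $a \leq q$, atomlessness gives $\mathbf{0} < e < a$, and $e$ is thin inside $q$: any off-branch particle is disjoint from $a \geq e$, while for an on-branch particle $p$ its off-child $p \cap (-\overline{\theta}_{i^*} r_{i^*})$ is disjoint from $a \geq e$. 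If instead every branch through $q$ has zero or nonexistent infimum, I would build a ``fattened branch'': a strictly decreasing sequence of sub-particles $q = p_0 > p_1 > \cdots$ together with atomless proper cuts of the successive off-pieces, arranged so that every particle is eventually separated while a deliberately reserved nonzero element of $\mathcal{B}$ is kept below all the $p_d$ and hence survives in the $\sigma$-complete infimum, producing a nonzero crushed element. Keeping that reserved witness alive through the infinite intersection, in the absence of a countably additive measure, is the delicate step and the main obstacle of the whole argument; I expect it to absorb most of the effort, the remaining bookkeeping being as above.
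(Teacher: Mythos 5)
Your reduction of the theorem to a single existence statement --- below every particle $q$ there is a nonzero $e$ such that every particle has a subparticle disjoint from $e$ --- is correct, and so is your treatment of the case in which some branch through $q$ has nonzero infimum (indeed that argument works for any infinite partial branch with nonzero infimum, and the infimum itself is already thin). Note that the paper offers no proof of this theorem beyond the remark that the scheme of Theorem~\ref{th:kjhgf5} applies, so abstracting that scheme is the right thing to attempt. However, the step you flag as the main obstacle is not merely delicate: it cannot be carried out at the stated level of generality. Consider the Cohen algebra $\mathcal{B}$, the quotient of the Borel $\sigma$-algebra of $\{0,1\}^{\mathbb N}$ by the $\sigma$-ideal of meager sets (equivalently, the regular open algebra of $\{0,1\}^{\mathbb N}$). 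It is atomless and Dedekind complete, and the classes $r_n$ of the clopen sets $\{x:\,x(n)=1\}$ form a countable pre-Rademacher family whose particles are the classes of the nonempty basic clopen sets. Since every Borel set coincides with an open set modulo a meager set, every nonzero element of $\mathcal{B}$ lies \emph{above} some particle. Hence no nonzero thin element exists (a thin $e>\mathbf{0}$ would dominate a particle $p_0$, and then no subparticle of $p_0$ is disjoint from $e$), and in fact this family is already maximal: if $r_0\neq\mathbf{1}$ then $-r_0$ dominates a particle $p$, so $r_0\cap p=\mathbf{0}$, while $r_0=\mathbf{1}$ makes $-r_0$ itself a zero particle. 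So not only your key lemma but the theorem as literally stated fails for a bare atomless Dedekind $\sigma$-complete algebra.

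What the concrete proof of Theorem~\ref{th:kjhgf5} really uses is the strictly positive countably additive measure, and that is the hypothesis under which your scheme does go through. If $\mathcal{B}$ is a measurable algebra with measure $\nu$, a nonzero thin element below any particle $q$ is obtained directly: enumerate the particles $(p_m)_{m\geq 1}$, choose inside each a subparticle $p_m'$ with $\nu(p_m')\leq 2^{-m-1}\nu(q)$ (split repeatedly by fresh generators and keep the smaller half), and put $e=q\setminus\sup_m p_m'$; countable subadditivity gives $\nu(e)\geq\nu(q)/2>0$, and each $p_m'$ witnesses thinness at $p_m$. With that lemma your recursion producing the $A_m, B_m$ finishes the proof, and this is visibly the same mechanism as Lemma~\ref{le:fut}(1), where the positive Lebesgue measure of the fat Cantor set is exactly what certifies nonzeroness. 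The honest conclusion is that your proposal is the right abstraction of the paper's scheme, but both it and the theorem need a hypothesis supplying a positivity device --- measurability of $\mathcal{B}$, or at least a strictly positive countably subadditive functional --- in place of mere Dedekind $\sigma$-completeness; your own diagnosis of where the effort would be absorbed is precisely the point at which the general statement breaks down.
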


\section{Rademacher families} \label{sec:Radfam}

The following property of a pre-Rademacher family gives a kind of the ``\textbf{independence}'' condition which is needed to be a Rademacher family.

\begin{defn}
A pre-Rademacher family $\mathcal{R}$ in a Boolean algebra $\mathcal{B}$ is called \emph{minimal} if for any $r \in \mathcal{R}$ one has that $r \notin \mathcal{B}_\tau(\mathcal{R} \setminus \{r\})$.
\end{defn}

Simple examples (like a disjoint family of nonzero elements) show that a minimal family need not be pre-Rademacher. On the other hand, by the results of the previous section, there are pre-Rademacher families which are not minimal.

Another property of a pre-Rademacher family that is needed to make it a Rademacher family is an analogue of the property to be ``\textbf{identically distributed}''.

\begin{defn}
An infinite pre-Rademacher family $(r_i)_{i \in I}$ in a Boolean algebra $\mathcal{B}$ is called:
\begin{itemize}
  \item \emph{hereditarily atomless} if every of its infinite subfamily is atomless;
  \item \emph{vanishing at infinity} if $\bigcap_{j \in J} \theta_j r_j = \mathbf{0}$ for any infinite subset $J \subseteq I$ and any collection of signs $\theta_j = \pm 1$, $j \in J$.
\end{itemize}
\end{defn}

The following statement is a consequence of Theorem~\ref{th:eqcard}.

\begin{prop} \label{pr:cons}
Let $\mathcal{B}$ be a Boolean algebra. Then the following assertions hold.
\begin{enumerate}
  \item Every vanishing at infinity pre-Rademacher family in $\mathcal{B}$ is hereditarily atomless.
  \item If, moreover, $\mathcal{B}$ is $\sigma$-complete then the converse also holds: an infinite pre-Rademacher family in $\mathcal{B}$ is hereditarily atomless if and only if it vanishes at infinity.
\end{enumerate}
\end{prop}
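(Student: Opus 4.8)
The plan is to derive both statements from Theorem~\ref{th:eqcard}(2), together with the already observed fact that every subfamily of a pre-Rademacher family is again a pre-Rademacher family. Throughout I would keep in mind a subtlety of that characterization: it offers two alternatives for atomlessness, namely that the full intersection $\bigcap_{i} \theta_i r_i$ equals $\mathbf{0}$ \emph{or} that it fails to exist, whereas the vanishing at infinity property explicitly asserts the first one (existence together with the value $\mathbf{0}$). Bridging this gap is the only real content of the argument.

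For (1), let $\mathcal{R} = (r_i)_{i\in I}$ vanish at infinity and let $\mathcal{S} = (r_i)_{i\in I'}$ be an arbitrary infinite subfamily, so $I' \subseteq I$ is infinite. First I would note that $\mathcal{S}$ is an infinite pre-Rademacher family, so Theorem~\ref{th:eqcard}(2) applies to it. Given any collection of signs $(\theta_i)_{i\in I'}$, the set $I'$ is an infinite subset of $I$, so the vanishing at infinity property of $\mathcal{R}$ yields $\bigcap_{i\in I'}\theta_i r_i = \mathbf{0}$. This is exactly condition (2)(b) of Theorem~\ref{th:eqcard} for $\mathcal{S}$ (with the first of its two alternatives holding), whence $\mathcal{B}_\tau(\mathcal{S})$ is atomless. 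As $\mathcal{S}$ was arbitrary, $\mathcal{R}$ is hereditarily atomless. No completeness assumption is needed here.

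For (2), since the implication just proved uses no hypothesis on $\mathcal{B}$, only the direction ``hereditarily atomless $\Rightarrow$ vanishing at infinity'' remains, and this is where $\sigma$-completeness enters. Assume $\mathcal{R}$ is hereditarily atomless, fix an infinite $J \subseteq I$ and signs $(\theta_j)_{j\in J}$; I must show that $\inf_{j\in J}\theta_j r_j$ exists and equals $\mathbf{0}$. The difficulty is that $J$ may be uncountable, so $\sigma$-completeness does not directly guarantee the existence of this infimum. To circumvent this I would pass to a countably infinite subset $J_0 \subseteq J$. By $\sigma$-completeness the infimum $\bigcap_{j\in J_0}\theta_j r_j$ exists, and by hereditary atomlessness the subfamily $(r_j)_{j\in J_0}$ is atomless, so Theorem~\ref{th:eqcard}(2) forces this existing infimum to be $\mathbf{0}$ (the ``does not exist'' alternative being excluded). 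Finally, any lower bound $z$ of $\{\theta_j r_j : j\in J\}$ is a fortiori a lower bound of $\{\theta_j r_j : j\in J_0\}$, so $z \leq \bigcap_{j\in J_0}\theta_j r_j = \mathbf{0}$; hence $\mathbf{0}$ is the greatest lower bound and $\bigcap_{j\in J}\theta_j r_j = \mathbf{0}$, as required.

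The main obstacle is precisely the one flagged above: reconciling the two alternatives of Theorem~\ref{th:eqcard}(2) with the stronger vanishing at infinity condition. The reduction to a countable subfamily resolves it cleanly, since it simultaneously secures the existence of an infimum (via $\sigma$-completeness) and pins its value to $\mathbf{0}$ (via atomlessness); the lower-bound comparison then transfers the conclusion to the possibly uncountable index set $J$ without ever requiring the full infimum over $J$ to exist a priori.
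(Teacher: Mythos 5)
Your argument is correct and follows exactly the route the paper intends: the paper offers no written proof, merely noting that the proposition is a consequence of Theorem~\ref{th:eqcard}, and your derivation from part (2) of that theorem (including the reduction to a countable subfamily to secure existence of the infimum via $\sigma$-completeness, and the lower-bound comparison to return to an uncountable $J$) is the natural way to fill that in.
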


As Example~\ref{ex:ssjhd} shows, a hereditarily atomless pre-Rademacher family need not be vanishing at infinite. So, the $\sigma$-completeness assumption in (2) of Proposition~\ref{pr:cons} is essential.

\begin{defn}
A vanishing at infinity minimal pre-Rademacher family in a Boolean algebra $\mathcal{B}$ is called a \emph{Rademacher family} in $\mathcal{B}$.
\end{defn}

To emphasize the importance of the vanishing at infinity property of a Rademacher family, we provide examples showing the variety of distinct (non-isomorphic) types of countable minimal pre-Rademacher families without this property. Another sense of the vanishing at infinity property is explained below in Theorem~\ref{th:ccvmn740}: this property gives positivity of the measure generated by a pre-Rademacher family.

\begin{prop} \label{ex:5eqcardex}
Let $\mathcal{B}$ be a purely atomic $\tau$-complete Boolean algebra with the set $\mathcal{A}_0$ of atoms of cardinality $\aleph_0 \leq |\mathcal{A}_0| \leq c$, where $c$ is the cardinality of continuum. Then there exists a countable $\sigma$-complete (and hence, $\tau$-complete) minimal pre-Rademacher family in $\mathcal{B}$.
\end{prop}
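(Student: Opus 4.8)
The plan is to represent $\mathcal{B}$ through its atoms and to realize a candidate family $\mathcal{R} = (r_n)_{n=1}^\infty$ as a system of ``coordinate functions'' that code each atom by a point of the Cantor cube $D^{\mathbb N} = \{-1,1\}^{\mathbb N}$. Since $\mathcal{B}$ is purely atomic and $\tau$-complete, every element is the join of the atoms below it, and in particular each $r_n$ is determined by the set of atoms it dominates; conversely, $\tau$-completeness lets us \emph{define} $r_n = \sup\{a \in \mathcal{A}_0 : \phi(a)_n = 1\}$ for any prescribed assignment. Thus choosing $\mathcal{R}$ amounts to choosing a map $\phi \colon \mathcal{A}_0 \to D^{\mathbb N}$, where for an atom $a$ one sets $\phi(a)_n = 1$ if $a \leq r_n$ and $\phi(a)_n = -1$ if $a \leq -r_n$ (exactly one holds, since $(r_n,-r_n)$ is a partition and $a$ is an atom). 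Under this correspondence a particle becomes $\bigcap_{j \in J} \theta_j r_j = \sup\{a \in \mathcal{A}_0 : \phi(a)_j = \theta_j \text{ for all } j \in J\}$, i.e.\ the join of the atoms whose code lies in the basic cylinder determined by $(J, \theta)$.

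First I would translate the three required properties into conditions on $\phi$. Property (R1) holds if and only if every basic cylinder of $D^{\mathbb N}$ contains some $\phi(a)$, that is, if and only if $\phi(\mathcal{A}_0)$ is dense in $D^{\mathbb N}$. For minimality it suffices to produce, for each $n$, a pair of atoms $a_n, b_n$ whose codes differ in the single coordinate $n$: say that an element $x \in \mathcal{B}$ \emph{does not separate} $a_n, b_n$ if $a_n \leq x \Leftrightarrow b_n \leq x$. The collection of non-separating elements is closed under complements, under arbitrary joins and meets, and (a short argument using that atoms are not dominated by any $u_\alpha \downarrow \mathbf{0}$) under order limits; it contains every $r_m$ with $m \neq n$, hence contains all of $\mathcal{B}_\tau(\mathcal{R} \setminus \{r_n\})$. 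Since $r_n$ itself separates $a_n$ from $b_n$, we conclude $r_n \notin \mathcal{B}_\tau(\mathcal{R} \setminus \{r_n\})$, which is exactly minimality. Finally, $\sigma$-completeness is approached through injectivity of $\phi$: if $\phi$ is injective, then for each atom $a$ the singleton $\{a\}$ equals the countable particle $\bigcap_n \phi(a)_n\, r_n \in \mathcal{B}_\sigma(\mathcal{R})$, so every atom lies in $\mathcal{B}_\sigma(\mathcal{R})$, and one must then argue that these $\sigma$-generate all of $\mathcal{B}$.

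With these reductions the construction of $\phi$ is straightforward. I would fix a countable dense set $D_0 \subseteq D^{\mathbb N}$, for instance the eventually-$(-1)$ sequences, arranged to contain the point $x^\ast = (-1,-1,\ldots)$ together with, for each $n$, the point $x^{(n)}$ obtained from $x^\ast$ by flipping its $n$-th coordinate to $1$; the pairs $(x^\ast, x^{(n)})$ furnish the codes needed for minimality. Since $\aleph_0 \leq |\mathcal{A}_0| \leq c = |D^{\mathbb N}|$, I can pick a set $T$ with $D_0 \subseteq T \subseteq D^{\mathbb N}$ and $|T| = |\mathcal{A}_0|$, and let $\phi$ be a bijection of $\mathcal{A}_0$ onto $T$. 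Then $\phi$ is injective, its image contains the dense set $D_0$, and it realizes the flip-pairs, so the resulting family $r_n = \sup\{a : \phi(a)_n = 1\}$ satisfies (R1) and minimality. Because $\mathcal{B}$ is purely atomic it has atoms, so by Theorem~\ref{th:eqcard} the family $\mathcal{B}_\tau(\mathcal{R})$ cannot be atomless and therefore $\mathcal{R}$ cannot vanish at infinity, which is precisely the phenomenon this example is meant to exhibit.

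The step I expect to be the main obstacle is reconciling $\sigma$-completeness with minimality: completeness forces the coordinate family to be rich enough to separate all atoms, while minimality forbids any single coordinate from being recoverable from the others. The flip-pair device resolves this tension by planting, for every $n$, a witness that coordinate $n$ carries information absent from the remaining coordinates; the only care needed is to keep these witnesses inside a dense image (so that (R1) survives) and to keep $\phi$ injective (so that every singleton is a countable particle). The genuinely delicate point, for uncountable $\mathcal{A}_0$, is to verify that the single countable family actually $\sigma$-generates all of $\mathcal{B}$ rather than merely the atoms; this is where the hypotheses on $\mathcal{B}$ and the bound $|\mathcal{A}_0| \leq c$, which guarantees an injective code into $D^{\mathbb N}$, are essential.
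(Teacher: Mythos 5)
Your proposal is essentially the paper's own proof. The paper takes $\mathcal{A}_0$ to be indexed by a dense subset $\mathcal{B}_0$ of $[0,1)$, codes each atom by its binary digits, and sets $r_n = \{x \in \mathcal{B}_0 : a_n(x) = 1\}$ --- exactly your coordinate-function construction with $D^{\mathbb N}$ replaced by $[0,1)$; density of the image gives (R1) in both versions, and injectivity of the code makes each singleton a countable particle, which is how both you and the paper address $\sigma$-completeness. Your minimality argument is a slightly cleaner local variant of the paper's: the paper uses the global involution $x \mapsto x^*$ flipping the $n_0$-th digit and shows that every element of $\mathcal{B}_\tau(\{r_n : n \neq n_0\})$ is invariant under it (which tacitly requires the code set to be closed under finite digit flips), whereas you plant a single flip-pair of atoms per coordinate and observe that the elements not separating that pair form an order closed subalgebra containing the remaining $r_m$; both work. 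The one step you explicitly leave open --- that the singletons, once shown to lie in $\mathcal{B}_\sigma(\mathcal{R})$, actually $\sigma$-generate all of $\mathcal{B}$ --- is precisely where the paper's proof also stops. For countable $\mathcal{A}_0$ it closes immediately, since every element of $\mathcal{B}$ is then a countable join of atoms. For uncountable $\mathcal{A}_0$ your suspicion is justified: in the power-set algebra every countable supremum exists, so $\mathcal{B}_\sigma(\mathcal{R})$ is exactly the $\sigma$-field generated by the countably many sets $r_n$, which has cardinality at most $c$ and hence cannot exhaust the power set when $|\mathcal{A}_0| = c$. So the delicate point you flag is a genuine gap, but it is a gap in the paper's statement and proof as much as in your reconstruction of it, not a defect peculiar to your approach.
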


\begin{proof}
With no loss of generality we assume that $\mathcal{A}_0 = \bigl\{ \{a\}: \, a \in \mathcal{B}_0 \bigr\}$, where $\mathcal{B}_0$ is a dense subset of $[0,1)$ and $\mathcal{B}$ is the power set of $\mathcal{B}_0$, that is, the set of all subsets of $\mathcal{B}_0$. Any number $x \in [0,1)$ we represent as $x = \sum_{n=1}^\infty a_n(x) \, 2^{-n}$, where the dyadic digits $a_n(x) \in \{0,1\}$ are not eventually $1$'s. We set $r_n = \{x \in \mathcal{B}_0: \, a_n(x) = 1\}$. Then for any finite collection of distinct numbers $n_1, \ldots, n_k \in \mathbb N$ and signs $\theta_1, \ldots, \theta_k = \pm 1$ one has
$$
\theta_1 r_{n_1} \cap \ldots \cap \theta_k r_{n_k} = \Bigl\{ x \in [0,1): \, (\forall i \leq k) \, a_{n_i}(x) = \frac{\theta_i + 1}{2} \Bigr\} \cap \mathcal{B}_0,
$$
which is nonempty because $\mathcal{B}_0$ is dense in $[0,1)$. To show that $(r_n)_{n=1}^\infty$ is $\sigma$-complete, observe that for any $y \in [0,1)$ one has
$$
\{y\} = \bigcap_{n=1}^\infty \Bigl\{ x \in [0,1): \, a_n(x) = a_n(y) \Bigr\} = \bigcap_{n=1}^\infty \bigl( 2 a_n(y) - 1 \bigr) \Bigl\{ x \in [0,1): \, a_n(x) = 1 \Bigr\}.
$$
In particular, for any $y \in \mathcal{B}_0$ one has $\{y\} = \bigcap_{n=1}^\infty \bigl( 2 a_n(y) - 1 \bigr) \, r_n.$

Finally, we show that $(r_n)_{n=1}^\infty$ is minimal. Fix any $n_0 \in \mathbb N$. We prove the following claim.

\textbf{Claim.} For any $A \in \mathcal{B}_\tau \bigl( \{r_n: \, n \neq n_0\} \bigr)$ one has
\begin{equation} \label{eq:fzfjdh}
(\forall x \in A) \,\,\, x^* \overset{ \text{def} }{=} \sum_{n \neq n_0} a_n(x) 2^{-n} + \bigl( 1 - a_{n_0}(x) \bigr) \, 2^{-n_0} \in A.
\end{equation}
First observe that \eqref{eq:fzfjdh} holds for $A = r_n$ with any $n \neq n_0$. Hence, by (1) of Proposition~\ref{pr:semigen}, \eqref{eq:fzfjdh} holds for all $A \in \mathcal{B} \bigl( \{r_n: \, n \neq n_0\} \bigr)$. Now fix any $A' \in \mathcal{B}_\tau \bigl( \{r_n: \, n \neq n_0\} \bigr)$. By (2) of Proposition~\ref{pr:semigen}, there exists a net $(A_\alpha)$ in $\mathcal{B} \bigl( \{r_n: \, n \neq n_0\} \bigr)$ and a net $(u_\alpha)$ in $\mathcal{B}$ with the same index set such that $u_\alpha \downarrow \mathbf{0}$ and $A_\alpha \triangle A' \leq u_\alpha$. Hence, given any $x \in A'$, there exists $\alpha_0$ such that $x \notin u_\alpha$, and therefore, $x \in A_\alpha$ for all $\alpha \geq \alpha_0$. Since \eqref{eq:fzfjdh} holds for $A = A_\alpha$, we have that $X^* \in A_\alpha$ for all $\alpha \geq \alpha_0$, and so $x^* \in A$. Thus, \eqref{eq:fzfjdh} holds for $A = A'$ and the claim is proved.

Since \eqref{eq:fzfjdh} does not hold for $A = r_{n_0}$, we deduce that $r_{n_0} \in \mathcal{B}_\tau(\mathcal{R}) \setminus \mathcal{B}_\tau \bigl( \{r_n: \, n \neq n_0\} \bigr)$.
\end{proof}

\begin{thm} \label{thm:5eqcardex}
Let $\mathcal{B} = \mathcal{B}_1 \oplus \mathcal{B}_2$ be the direct sum of a Boolean algebra $\mathcal{B}_1$ isomorphic to $\widehat{\mathcal{B}}$ and a purely atomic $\tau$-complete Boolean algebra $\mathcal{B}_2$ with the set $\mathcal{A}_0$ of atoms of cardinality $|\mathcal{A}_0| \leq c$, where $c$ is the cardinality of continuum. Then there exists a countable $\sigma$-complete (and hence, $\tau$-complete) minimal pre-Rademacher family in $\mathcal{B}$.
\end{thm}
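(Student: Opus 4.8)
The plan is to build the required family by gluing the usual Rademacher family on the atomless summand $\mathcal{B}_1 \cong \widehat{\mathcal{B}}$ to the minimal $\sigma$-complete family produced on the atomic summand $\mathcal{B}_2$ by Proposition~\ref{ex:5eqcardex}. Identify $\mathcal{B}_1$ with $\widehat{\mathcal{B}}$ and let $(\widehat{r}_n)_{n=1}^\infty$ be the usual Rademacher family on $[0,1)$; by Theorem~\ref{th:kjhgf5} it is a complete pre-Rademacher family in $\mathcal{B}_1$, and it is minimal and vanishing at infinity (properties (R3) and (R2)). Let $(s_n)_{n=1}^\infty$ be the countable $\sigma$-complete minimal pre-Rademacher family in $\mathcal{B}_2$ given by Proposition~\ref{ex:5eqcardex}; the hypothesis $|\mathcal{A}_0| \leq c$ is exactly what that proposition requires, and I would assume $\mathcal{A}_0$ infinite, the case of finitely many atoms being the easier variant where one separates the atoms directly by their dyadic codes. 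Viewing $\mathcal{B}_1, \mathcal{B}_2$ as the complementary ideals of $\mathcal{B}$ determined by $\mathbf{1} = \mathbf{1}_1 \sqcup \mathbf{1}_2$, set $r_n = \widehat{r}_n \sqcup s_n$, i.e. the element with $\mathcal{B}_1$-coordinate $\widehat{r}_n$ and $\mathcal{B}_2$-coordinate $s_n$.

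First I would verify that $\mathcal{R} = (r_n)$ is a pre-Rademacher family. Since the coordinate projections $\pi_1,\pi_2$ are Boolean homomorphisms, each particle of $\mathcal{R}$ equals $\bigl(\bigcap_j \theta_j \widehat{r}_j\bigr) \sqcup \bigl(\bigcap_j \theta_j s_j\bigr)$; its first coordinate is nonzero because $(\widehat{r}_n)$ is pre-Rademacher, hence the particle is nonzero. Minimality is likewise transported through $\pi_1$: if $r_{n_0} \in \mathcal{B}_\tau(\mathcal{R}\setminus\{r_{n_0}\})$, then, as $\pi_1$ is an order-continuous homomorphism and therefore maps $\mathcal{B}_\tau(\mathcal{C})$ into $\mathcal{B}_\tau(\pi_1(\mathcal{C}))$, we would obtain $\widehat{r}_{n_0} \in \mathcal{B}_\tau\bigl(\{\widehat{r}_n : n \neq n_0\}\bigr)$, contradicting the minimality (property (R3)) of the usual family. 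Thus $\mathcal{R}$ is a minimal pre-Rademacher family; note that it does not vanish at infinity, which is precisely the phenomenon this example is meant to display.

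The heart of the matter is $\sigma$-completeness, $\mathcal{B}_\sigma(\mathcal{R}) = \mathcal{B}$, and the decisive step is to recover the two units $\mathbf{1}_1, \mathbf{1}_2$ inside $\mathcal{B}_\sigma(\mathcal{R})$, after which the direct sum splits. Here the interplay of the two summands is essential. For an atom $\{y\}$ of $\mathcal{B}_2$, Proposition~\ref{ex:5eqcardex} gives $\{y\} = \bigcap_n (2a_n(y)-1) s_n$, while the same sign pattern in $\mathcal{B}_1$ yields $\bigcap_n (2a_n(y)-1)\widehat{r}_n = \mathbf{0}$ because $(\widehat{r}_n)$ vanishes at infinity; hence $\bigcap_n (2a_n(y)-1) r_n = \mathbf{0} \sqcup \{y\}$, so every atom of $\mathcal{B}_2$ lies in $\mathcal{B}_\sigma(\mathcal{R})$. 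I would then recover $\mathbf{1}_2$ (and $\mathbf{1}_1 = \mathbf{1}\setminus\mathbf{1}_2$) by applying to $(r_n)$ the same countable sequence of Boolean operations that produces $\mathbf{1}_2$ from $(s_n)$ in $\mathcal{B}_2$, whose accompanying $\mathcal{B}_1$-coordinate is null; for countable $\mathcal{A}_0$ this is just the disjoint union of the atoms. Once $\mathbf{1}_1, \mathbf{1}_2 \in \mathcal{B}_\sigma(\mathcal{R})$, the trace $\{x \leq \mathbf{1}_1 : x \in \mathcal{B}_\sigma(\mathcal{R})\}$ is a $\sigma$-order closed subalgebra of $\mathcal{B}_1$ containing $r_n \cap \mathbf{1}_1 = \widehat{r}_n$, hence contains $\mathcal{B}_\sigma^{\mathcal{B}_1}(\{\widehat{r}_n\}) = \mathcal{B}_1$ (the usual family is complete and $\widehat{\mathcal{B}}$ has the ccc, so $\sigma$- and $\tau$-generation coincide); symmetrically the trace on $\mathbf{1}_2$ contains $\mathcal{B}_\sigma^{\mathcal{B}_2}(\{s_n\}) = \mathcal{B}_2$. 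Therefore $\mathcal{B}_\sigma(\mathcal{R}) \supseteq \mathcal{B}_1 \cup \mathcal{B}_2$ and equals $\mathcal{B}$, and $\tau$-completeness follows from $\mathcal{B}_\sigma(\mathcal{R}) \subseteq \mathcal{B}_\tau(\mathcal{R}) \subseteq \mathcal{B}$.

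The main obstacle is exactly this unit-recovery step: isolating the atomless and atomic units inside the $\sigma$-closure. This is what forces the construction to couple the two families through a common dyadic digit structure, so that a single infinite sign pattern simultaneously annihilates the atomless coordinate (vanishing at infinity) and pins down a point in the atomic coordinate. Everything else—the pre-Rademacher property, minimality, and the final splitting of the direct sum—is then routine bookkeeping carried through the coordinate projections.
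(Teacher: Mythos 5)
Your construction is not the paper's, but it runs on the same engine: an infinite signed intersection of the glued elements annihilates the $\widehat{\mathcal{B}}$-coordinate (vanishing at infinity of the usual Rademacher family) while surviving in the atomic coordinate, so that each $(\mathbf{0},\{y\})$ lands in $\mathcal{B}_\sigma(\mathcal{R})$ --- this is exactly the paper's identity \eqref{eq:ajshf}. The paper's written proof, however, only treats \emph{finite} $\mathcal{A}_0$, taking the second coordinate of $r_n$ to be a single atom $a_{\phi(n)}$, constant on each block of an infinite partition $\mathbb{N}=\bigsqcup_k N_k$, so that $\bigcap_{j\in N_k}r_j=(\mathbf{0},a_k)$ and $\mathbf{1}_2$ is the finite union of these; your ``dyadic codes'' device handles that case too, and for \emph{countably infinite} $\mathcal{A}_0$ your argument is complete ($\mathbf{1}_2$ is then the countable union of the atoms, and your trace argument on $\mathbf{1}_1$ and $\mathbf{1}_2$ is correct), so there you actually cover ground the paper leaves blank. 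The pre-Rademacher and minimality verifications via the coordinate projection $\pi_1$ are fine.

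The genuine gap is exactly where you place ``the heart of the matter'': recovering $\mathbf{1}_2$ inside $\mathcal{B}_\sigma(\mathcal{R})$ when $\mathcal{A}_0$ is \emph{uncountable}. The proposed device --- apply to $(r_n)$ ``the same countable sequence of Boolean operations that produces $\mathbf{1}_2$ from $(s_n)$ in $\mathcal{B}_2$'' --- is vacuous: $\mathbf{1}_2$ is the unit of $\mathcal{B}_2$, so it belongs to every subalgebra of $\mathcal{B}_2$ and is not produced from $(s_n)$ by any nontrivial operations; the only operations that yield it involve complementation in $\mathcal{B}_2$, which transported to $\mathcal{B}$ becomes complementation relative to $\mathbf{1}=\mathbf{1}_1\sqcup\mathbf{1}_2$ and returns $\mathbf{1}$, not $\mathbf{1}_2$. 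In fact for $r_n=\widehat{r}_n\sqcup s_n$ the construction itself can fail to be $\sigma$-complete: aligning the digit conventions so that $\widehat{r}_n$ and $s_n$ are the measure class and the $\mathcal{B}_0$-trace of one and the same Borel set (harmless, since a pre-Rademacher family is a family of two-element partitions), the map $\Theta(B)=\bigl(\widehat{B},\,B\cap\mathcal{B}_0\bigr)$ on the Borel $\sigma$-algebra of $[0,1)$ commutes with countable suprema (which in $\mathcal{B}$ are computed coordinatewise), so its range is a $\sigma$-order closed subalgebra of $\mathcal{B}$ containing every $r_n$, hence containing $\mathcal{B}_\sigma(\mathcal{R})$; but $\mathbf{1}_2=(\mathbf{0},\mathcal{B}_0)$ lies in this range only if $\mathcal{B}_0$ is covered by a Borel null set, which fails for instance when $\mathcal{B}_0$ is the set of irrationals --- a choice that Proposition~\ref{ex:5eqcardex} permits. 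So the uncountable case needs a genuinely different argument (one the paper does not supply either: its proof stops after the finite case), and you should at least flag that your proof establishes the theorem only for $|\mathcal{A}_0|\leq\aleph_0$.
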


For the proof, we need some lemmas which will be useful also for further consideration.

We say that a family $\mathcal{R} = (r_i)_{i \in I}$ of elements of a Boolean algebra $\mathcal{B}$ is a \emph{pre-Rademacher family on an element} $e \in \mathcal{B} \setminus \{\mathbf{0}\}$ if $\mathcal{R}$ is a pre-Rademacher family in the Boolean algebra $\mathcal{B}_e = \{x \in \mathcal{B}: \, x \leq e\}$ with the operations induced by $\mathbf{B}$ and unity $e$. We also introduce the notions of a \emph{Rademacher family, minimal pre-Rademacher family, hereditarily atomless pre-Rademacher family, vanishing at infinity pre-Rademacher family, complete Rademacher and complete pre-Rademacher family on an element} $e$ similarly.

\begin{lemma} \label{le:obvcnjfu}
Let $\mathcal{R} = (r_i)_{i \in I}$ be a family of elements of a Boolean algebra $\mathcal{B}$ and $e$ is an upper bound for $\mathcal{R}$ in $\mathcal{B}$. Then for the assertions
\begin{enumerate}
  \item[(1)] $\mathcal{R}$ is a pre-Rademacher family on $e$;
  \item[(2)] $\mathcal{R}$ is a pre-Rademacher family;
\end{enumerate}
one has (1) $\Rightarrow$ (2) anyway, and (2) $\Rightarrow$ (1) if, in addition, $\mathcal{R}$ is infinite.
\end{lemma}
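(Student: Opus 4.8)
The plan is to reduce both implications to a single elementary identity comparing the particles formed in $\mathcal{B}_e$ with those formed in $\mathcal{B}$. The two algebras differ only in complementation: for $x \leq e$, a sign $-1$ produces $e \setminus x$ inside $\mathcal{B}_e$ (whose unit is $e$) but $\mathbf{1} \setminus x$ inside $\mathcal{B}$. Fix a finite $J \subseteq I$ and signs $(\theta_j)_{j \in J}$, write $P = \bigcap_{j \in J} \theta_j r_j$ for the particle computed in $\mathcal{B}$, and let $P_e$ denote the particle with the same data computed in $\mathcal{B}_e$. Since $e$ is an upper bound for $\mathcal{R}$, every $r_i \leq e$, so each factor of $P_e$ equals $e \cap (\theta_j r_j)$: for $\theta_j = 1$ this is $r_j$, and for $\theta_j = -1$ it is $e \setminus r_j = e \cap (\mathbf{1} \setminus r_j)$. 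Intersecting over $J$ gives
\[
P_e = e \cap P,
\]
and the empty-$J$ convention is consistent because the units satisfy $e \cap \mathbf{1} = e$. I would record this identity first, as everything else rests on it.

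For the implication (1) $\Rightarrow$ (2), no cardinality assumption is required. If $\mathcal{R}$ is pre-Rademacher on $e$ then $P_e > \mathbf{0}$ for every finite $J$; by the identity, $P \geq e \cap P = P_e > \mathbf{0}$, so every nonempty particle in $\mathcal{B}$ is nonzero, and the empty particle is $\mathbf{1} \geq e > \mathbf{0}$ (here $e > \mathbf{0}$ since $\mathcal{R}$ is pre-Rademacher on $e$). Thus $\mathcal{R}$ is pre-Rademacher in $\mathcal{B}$.

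For (2) $\Rightarrow$ (1), infiniteness of $\mathcal{R}$ is used precisely to kill the one possible obstruction. The identity shows that $P_e = e \cap P$ can vanish while $P \neq \mathbf{0}$ only when $P \leq \mathbf{1} \setminus e$; since $r_j \leq e$ forces $P \leq r_j \leq e$ whenever some $\theta_j = 1$, this can happen only if all signs are $-1$ and $e = \bigcup_{j \in J} r_j$. To rule this out, given a particle $P_e = e \cap P$ with $J$ finite, I pick an index $i_0 \in I \setminus J$ (available since $\mathcal{R}$ is infinite) and pass to the particle $P \cap r_{i_0}$ of $\mathcal{B}$, which is nonzero by (2). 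Because $r_{i_0} \leq e$, this element lies below $e$, hence $P \cap r_{i_0} \leq e \cap P = P_e$ and so $P_e > \mathbf{0}$. The empty particle $P_e = e$ is nonzero since each singleton $r_i > \mathbf{0}$ by (2) and $r_i \leq e$; thus $\mathcal{R}$ is pre-Rademacher on $e$.

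The main obstacle is conceptual rather than computational: one must notice that the two notions of particle coincide up to intersection with $e$, that a positive factor already confines a particle below $e$, and hence that the only danger is an all-negative particle whose complement exhausts $e$. Appending the extra positive factor $r_{i_0}$ simultaneously preserves nonvanishing (via (2)) and forces the particle below $e$, which is exactly what infiniteness buys. That finiteness genuinely fails can be seen by taking two independent sets $r_1, r_2$ in a four-atom algebra and setting $e = r_1 \cup r_2$: then $\mathcal{R} = \{r_1, r_2\}$ is pre-Rademacher in $\mathcal{B}$, yet the all-negative particle $e \setminus (r_1 \cup r_2) = \mathbf{0}$ shows it is not pre-Rademacher on $e$.
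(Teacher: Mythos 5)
Your proof is correct and uses the same key idea as the paper: for $(2)\Rightarrow(1)$ the paper likewise picks $i_0\in I\setminus J$ (possible since $\mathcal{R}$ is infinite) and observes that the nonzero particle $r_{i_0}\cap\bigcap_{j\in J}\theta_j r_j$ of $\mathcal{B}$ lies below $e\cap\bigcap_{j\in J}\theta_j r_j$, which is exactly the particle computed in $\mathcal{B}_e$. Your explicit identity $P_e=e\cap P$ and the four-atom counterexample are just more detailed presentations of what the paper treats as obvious (it gives its own finite counterexample with $r_1=\widehat{[0,\frac12)}$, $r_2=\widehat{[\frac14,\frac34)}$, $e=\widehat{[0,\frac34)}$ immediately after the statement).
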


Before the proof, we notice that the implication (2) $\Rightarrow$ (1) is generally false for finite families $\mathcal{R} = (r_i)_{i \in I}$. Consider, for example, the family $\mathcal{R} = \{r_1, r_2\}$ in $\widehat{\mathcal{B}}$, where $r_1 = \widehat{\bigl[0,\frac12\bigr)}$ and $r_2 = \widehat{\bigl[\frac14,\frac34\bigr)}$. Then $\mathcal{R}$ is a pre-Rademacher family on $\widehat{\mathcal{B}}$, however is not a pre-Rademacher family on $e = \widehat{\bigl[0,\frac34\bigr)}$ (which is an upper bound for $\mathcal{R}$), because $(-r_1 \cap e) \cap (-r_2 \cap e) = \mathbf{0}$.

\begin{proof}
The implication (1) $\Rightarrow$ (2) is obvious. We prove (2) $\Rightarrow$ (1) for an infinite $\mathcal{R}$. Assume, on the contrary, that (2) holds, however there exists a finite subset $J \subset I$ and a collection of signs $\theta_j = \pm 1$, $j \in J$ such that $\bigcap_{j \in J} (\theta_j r_j \cap e) = \mathbf{0}$. Since $I$ is infinite, there exists $i \in I \setminus J$. Then
$$
r_i \cap \bigcap_{j \in J} \theta_j r_j \leq e \cap \bigcap_{j \in J} \theta_j r_j = \bigcap_{j \in J} (\theta_j r_j \cap e) = \mathbf{0},
$$
which contradicts (2).
\end{proof}

\begin{lemma} \label{le:obvcnjfu2}
Let $\mathcal{R} = (r_i)_{i \in I}$ be a family of elements of a Boolean algebra $\mathcal{B}$, $e$ is an upper bound for $\mathcal{R}$ in $\mathcal{B}$ such that $\mathcal{R}$ is a pre-Rademacher family on $e$. Then the following assertions are equivalent:
\begin{enumerate}
  \item[(1)] $\mathcal{R}$ is minimal as a pre-Rademacher family in $\mathcal{B}$;
  \item[(2)] $\mathcal{R}$ is a minimal pre-Rademacher family on $e$;
\end{enumerate}
\end{lemma}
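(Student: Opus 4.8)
The plan is to exploit the \emph{restriction homomorphism} $\pi\colon\mathcal B\to\mathcal B_e$ given by $\pi(x)=x\cap e$. Because $e$ is an upper bound for $\mathcal R$, this $\pi$ is a surjective Boolean homomorphism onto $\mathcal B_e$: it sends $\mathbf 1\mapsto e$ and the complement $\mathbf 1\setminus x$ to the relative complement $e\setminus(x\cap e)$, it is order continuous (meet with the fixed element $e$ preserves all existing suprema and infima, hence order limits), and it fixes $\mathcal R$ pointwise since each $r_i\le e$. Fixing $r\in\mathcal R$ and writing $\mathcal S=\mathcal R\setminus\{r\}$, the whole statement reduces to the single equivalence $r\in\mathcal B_\tau(\mathcal S)\iff r\in(\mathcal B_e)_\tau(\mathcal S)$, where the two order-closed subalgebras are formed in $\mathcal B$ and in $\mathcal B_e$ respectively.

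The implication (2)$\Rightarrow$(1) is the easy half, and I would prove it by contraposition. Recall from the preliminaries that $\mathcal B_\tau(\mathcal S)$ is the order closure of the algebra $\mathcal B(\mathcal S)$, so every one of its elements is an order limit of a net from $\mathcal B(\mathcal S)$. Applying the order-continuous homomorphism $\pi$ and using $\pi(\mathcal B(\mathcal S))=(\mathcal B_e)(\mathcal S)$, one obtains $\pi\bigl(\mathcal B_\tau(\mathcal S)\bigr)\subseteq(\mathcal B_e)_\tau(\mathcal S)$. Hence if $r\in\mathcal B_\tau(\mathcal S)$ then $r=\pi(r)\in(\mathcal B_e)_\tau(\mathcal S)$; equivalently, minimality on $e$ forces minimality in $\mathcal B$.

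The substantive direction is (1)$\Rightarrow$(2), again by contraposition: assuming $r\in(\mathcal B_e)_\tau(\mathcal S)$ I must produce $r\in\mathcal B_\tau(\mathcal S)$, and the clean route is to prove the reverse inclusion $(\mathcal B_e)_\tau(\mathcal S)\subseteq\mathcal B_\tau(\mathcal S)$. Consider $\mathcal D=\{x\in\mathcal B_\tau(\mathcal S):x\le e\}$. It is closed under finite joins and meets, and it is order closed \emph{inside} $\mathcal B_e$: if $C\subseteq\mathcal D$ and $\sup_{\mathcal B_e}C$ exists, then for any upper bound $t$ of $C$ in $\mathcal B$ the element $t\cap e$ is an upper bound in $\mathcal B_e$, whence $\sup_{\mathcal B_e}C\le t\cap e\le t$; thus $\sup_{\mathcal B_e}C=\sup_{\mathcal B}C$ lies in the order-closed $\mathcal B_\tau(\mathcal S)$ and is $\le e$, so it lies in $\mathcal D$. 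Therefore $\mathcal D$ is an order-closed sub\emph{algebra} of $\mathcal B_e$ containing $\mathcal S$ \emph{provided} it also contains the unit $e$ of $\mathcal B_e$ and is closed under the relative complement $x\mapsto e\setminus x=e\cap(\mathbf 1\setminus x)$; both requirements reduce to the single condition $e\in\mathcal B_\tau(\mathcal S)$. Granting that, minimality of $(\mathcal B_e)_\tau(\mathcal S)$ gives $(\mathcal B_e)_\tau(\mathcal S)\subseteq\mathcal D\subseteq\mathcal B_\tau(\mathcal S)$, and in particular $r\in\mathcal B_\tau(\mathcal S)$.

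The main obstacle is precisely the claim $e\in\mathcal B_\tau(\mathcal S)$, equivalently $\sup_{i}r_i=e$ with $i$ ranging over $\mathcal S$, equivalently $\inf_i(e\setminus r_i)=\mathbf 0$: the vanishing of the all-negative infinite particle over $\mathcal S$. I would isolate this as a preliminary step, and it is the only place the size and type of $\mathcal R$ enter. For an infinite family it follows from the vanishing-at-infinity property applied to the all-negative sign choice on the infinite index set $\mathcal S$: then the increasing net of finite unions $\bigcup_{i\in F}r_i$ order converges up to $e$, so $e=\sup_{i}r_i\in\mathcal B_\tau(\mathcal S)$, and the two order closures agree below $e$. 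This is the delicate point I would expect to need care, since for a family whose all-negative infinite particle is nonzero one has $e\notin\mathcal B_\tau(\mathcal S)$ and the crude reverse inclusion above breaks down, so securing $e\in\mathcal B_\tau(\mathcal S)$ from the hypotheses is the crux of the argument.
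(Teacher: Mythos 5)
Your reduction and your proof of (2) $\Rightarrow$ (1) are sound: $\pi(x)=x\cap e$ is an order-continuous Boolean homomorphism onto $\mathcal{B}_e$ fixing $\mathcal{R}$ pointwise, it carries $\mathcal{B}(\mathcal{S})$ onto $(\mathcal{B}_e)(\mathcal{S})$, and since $\mathcal{B}_\tau(\mathcal{S})$ is the order closure of $\mathcal{B}(\mathcal{S})$ (Proposition~\ref{pr:semigen}), applying $\pi$ to an approximating net gives $r=\pi(r)\in(\mathcal{B}_e)_\tau(\mathcal{S})$. The paper offers no argument to compare against --- it dismisses the lemma as ``an easy exercise, due to Proposition~\ref{pr:semigen}'' --- so the proposal must stand on its own, and the implication (1) $\Rightarrow$ (2) does not.

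The gap is exactly where you flag it, and as written it is fatal: to obtain $e\in\mathcal{B}_\tau(\mathcal{S})$ you invoke ``the vanishing-at-infinity property,'' but that is not among the hypotheses. The lemma assumes only that $\mathcal{R}$ is a \emph{pre}-Rademacher family on $e$; vanishing at infinity is a strictly stronger property that such families need not enjoy (Examples~\ref{ex:ffk2} and~\ref{ex:ssjhd} exhibit pre-Rademacher families with nonzero infinite all-negative particles, and the remark immediately following the lemma stresses that vanishing at infinity does not even transfer between $\mathcal{B}$ and $\mathcal{B}_e$). Consequently $\inf_{i\in\mathcal{S}}(e\setminus r_i)=\mathbf{0}$, hence $\sup_{i\in\mathcal{S}}r_i=e$ and $e\in\mathcal{B}_\tau(\mathcal{S})$, are simply unavailable from the stated hypotheses, and your closing sentence concedes that without them the inclusion $(\mathcal{B}_e)_\tau(\mathcal{S})\subseteq\mathcal{B}_\tau(\mathcal{S})$ ``breaks down.'' What survives without $e$: intersecting an approximating net with a fixed $r_{i_1}$, $i_1\in\mathcal{S}$, converts relative complements $e\setminus r_j$ into absolute ones, so $x\cap r_{i_1}\in\mathcal{B}_\tau(\mathcal{S})$ for every $x\in(\mathcal{B}_e)_\tau(\mathcal{S})$, and likewise $x\cap\bigcup_{i\in F}r_i$ for finite $F\subseteq\mathcal{S}$; but the portion of $r$ lying under the infinite all-negative particle $\bigcap_{i\in\mathcal{S}}(\mathbf{1}\setminus r_i)$ is not reached this way, and that is precisely the case your argument must still handle. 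Until it does, (1) $\Rightarrow$ (2) is unproved --- and your analysis in fact suggests the lemma is rather less of an ``easy exercise'' than the paper claims.
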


The proof is an easy exercise, due to Proposition~\ref{pr:semigen}.

Remark that a similar statement on a family to be Rademacher in $\mathcal{B}$ and on an upper bound of the family is not true, as well as for the vanishing at infinity and hereditarily atomless properties of a pre-Rademacher family.

The following two lemmas constitute an essential step in the procedure of gluing together separate pre-Rademacher families.

\begin{lemma} \label{le:strangeexhaust}
Let $\mathcal{B}$ be a Boolean algebra, $0 < e \in \mathcal{B}$ and $e = e' \sqcup e''$ with $e', e'' >0$. Let $\mathcal{R}' = (r_i')_{i \in I}$ and $\mathcal{R}'' = (r_i'')_{i \in I}$ be pre-Rademacher families on $e'$ and $e''$ respectively. Then $e' \notin \mathcal{B}_\tau(\mathcal{R})$, where $\mathcal{R} = (r_i)_{i \in I}$ and $r_i = r_i' \sqcup r_i''$ for every $i \in I$.
\end{lemma}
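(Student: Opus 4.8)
The plan is to pass to the principal ideal $\mathcal{B}_e = \{x \in \mathcal{B} : x \leq e\}$ and to exploit the product splitting $\mathcal{B}_e \cong \mathcal{B}_{e'} \times \mathcal{B}_{e''}$ coming from $e = e' \sqcup e''$. First I would reduce the statement to this ideal: the relativization $\rho : \mathcal{B} \to \mathcal{B}_e$, $\rho(x) = x \cap e$, is an order-continuous Boolean homomorphism with $\rho(r_i) = r_i$ and $\rho(e') = e'$, so if $e' \in \mathcal{B}_\tau(\mathcal{R})$ then $e'$ lies in the order closure of $\mathcal{B}(\mathcal{R})$ computed \emph{inside} $\mathcal{B}_e$. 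It therefore suffices to rule out the latter, and from now on all complements, suprema and order limits are taken in $\mathcal{B}_e$.

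Next I would pin down the structure of $\mathcal{B}(\mathcal{R})$. A direct computation shows that each particle of $\mathcal{R}$ in $\mathcal{B}_e$ factors as $\bigcap_{j \in J} \theta_j r_j = p_\sigma' \sqcup p_\sigma''$, where $p_\sigma' = \bigcap_{j \in J} \theta_j r_j'$ and $p_\sigma'' = \bigcap_{j \in J} \theta_j r_j''$ are the particles of $\mathcal{R}'$ and $\mathcal{R}''$ with the \emph{same} sign pattern $\sigma = (\theta_j)_{j \in J}$; both are nonzero because $\mathcal{R}'$ and $\mathcal{R}''$ are pre-Rademacher. By Proposition~\ref{pr:semigen}(1) every element of $\mathcal{B}(\mathcal{R})$ is a finite disjoint union of particles, and after refining to a common finite index set $J^*$ such an element has the form $\bigsqcup_{\sigma \in S}(p_\sigma' \sqcup p_\sigma'')$. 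Hence $\mathcal{B}(\mathcal{R})$ is exactly the graph $\{a' \sqcup \Phi(a') : a' \in \mathcal{B}(\mathcal{R}')\}$ of the Boolean isomorphism $\Phi : \mathcal{B}(\mathcal{R}') \to \mathcal{B}(\mathcal{R}'')$ determined by $p_\sigma' \mapsto p_\sigma''$; note that $\Phi(r_i') = r_i''$ and that $\Phi$ sends $x \cap e'$ to $x \cap e''$ for every $x \in \mathcal{B}(\mathcal{R})$.

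The isomorphism $\Phi$ preserves dyadic measures, and this already excludes $e'$ from $\mathcal{B}(\mathcal{R})$. Indeed, by Theorem~\ref{thm:dyadicfam} the dyadic measures $\mu'$ on $\mathcal{B}(\mathcal{R}')$ and $\mu''$ on $\mathcal{B}(\mathcal{R}'')$ assign the value $2^{-|J^*|}$ to every particle built on a common $J^*$, so for $a' = \bigsqcup_{\sigma \in S} p_\sigma'$ one has $\mu'(a') = |S|\,2^{-|J^*|} = \mu''(\Phi(a'))$. Consequently
\begin{equation*}
\mu'(x \cap e') = \mu''(x \cap e'') \qquad \text{for all } x \in \mathcal{B}(\mathcal{R}).
\end{equation*}
Since $\mu'(e' \cap e') = \mu'(e') = 1 \neq 0 = \mu''(e' \cap e'')$, we conclude $e' \notin \mathcal{B}(\mathcal{R})$.

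It remains to pass to the order closure, and this is where I expect the real difficulty to lie. Assuming $e'$ were the order limit of a net $(a_\alpha)$ in $\mathcal{B}(\mathcal{R})$, writing $a_\alpha = a_\alpha' \sqcup \Phi(a_\alpha')$ and applying the (order-continuous) projections onto the two factors yields $a_\alpha' \stackrel{\rm o}{\longrightarrow} e'$ in $\mathcal{B}_{e'}$ while $\Phi(a_\alpha') \stackrel{\rm o}{\longrightarrow} \mathbf{0}$ in $\mathcal{B}_{e''}$; equivalently, $c_\alpha' := e' \setminus a_\alpha' \stackrel{\rm o}{\longrightarrow} \mathbf{0}$ in $\mathcal{B}_{e'}$ whereas $\Phi(c_\alpha') \stackrel{\rm o}{\longrightarrow} e''$ in $\mathcal{B}_{e''}$. \textbf{The crux of the lemma is to show that this cannot happen}, i.e.\ that the canonical correspondence $\Phi$ cannot carry a null net to a net converging to the top. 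This step cannot be settled by the dyadic measure, which is notoriously not order-continuous (this is exactly the phenomenon behind Theorem~\ref{th:kjhgf5}); instead one must use the order-theoretic rigidity of particles from Lemmas~\ref{le:048uy} and~\ref{le:048uy2}, namely that a nonzero order limit of particles along a fixed sign-family $\Theta$ is the \emph{unique} lower bound of $\{\overline{\theta}_i r_i\}$, in order to transport order convergence across $\Phi$ and force the two sides to agree. Matching the order behaviour on $e'$ and $e''$ through $\Phi$ — using the pre-Rademacher hypothesis on \emph{both} families — is the delicate point that drives the whole argument.
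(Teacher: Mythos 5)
Your reduction to $\mathcal{B}_e$, the identification of $\mathcal{B}(\mathcal{R})$ with the graph $\{a'\sqcup\Phi(a')\}$ of the canonical isomorphism $\Phi:\mathcal{B}(\mathcal{R}')\to\mathcal{B}(\mathcal{R}'')$, and the splitting of a hypothetical order limit into $a_\alpha'\stackrel{\rm o}{\longrightarrow}e'$ and $\Phi(a_\alpha')\stackrel{\rm o}{\longrightarrow}\mathbf{0}$ all match the paper's argument (the paper's map $\varphi$ is your $\Phi$, and its $x_\alpha=z_\alpha\cap e'$, $y_\alpha=z_\alpha\cap e''$ are your $a_\alpha'$, $\Phi(a_\alpha')$). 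The dyadic-measure computation showing $e'\notin\mathcal{B}(\mathcal{R})$ is correct but, as you note, settles only the subalgebra and not its order closure.

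The genuine gap is that you stop precisely at the step that \emph{is} the lemma: you announce that the configuration ``$c_\alpha'\stackrel{\rm o}{\longrightarrow}\mathbf{0}$ while $\Phi(c_\alpha')\stackrel{\rm o}{\longrightarrow}e''$'' must be impossible and gesture at Lemmas~\ref{le:048uy} and~\ref{le:048uy2}, but give no argument. Those two lemmas are not the right tool here: they concern lower bounds of a full sign-family $\{\overline{\theta}_i r_i:i\in I\}$ for a \emph{fixed} $\Theta$, i.e.\ they characterize atoms of $\mathcal{B}_\tau(\mathcal{R})$, whereas nothing in the present situation singles out a sign-family, and $e'$ is in general nowhere near an atom. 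The paper finishes differently and more cheaply: fix $\alpha_0$ with $v_{\alpha_0}=w_{\alpha_0}\cap e''<e''$; from $e'\setminus x_\alpha\leq u_\alpha\downarrow\mathbf{0}$ the only upper bound of $\{x_\alpha\}_{\alpha\geq\alpha_0}$ is $e'$ (any upper bound $c$ satisfies $e'\setminus c\leq u_\alpha$ for all $\alpha$, hence $c=e'$); transporting this through the isomorphism $\varphi$ gives $\sup_{\alpha\geq\alpha_0}y_\alpha=e''$, which is played off against the fixed bound $y_\alpha\leq v_{\alpha_0}<e''$ valid for all $\alpha\geq\alpha_0$. So the missing step is an upper-bound/supremum transfer along $\Phi$ applied to the \emph{tail} of the net, not an appeal to the atom lemmas; without some such argument your write-up establishes only $e'\notin\mathcal{B}(\mathcal{R})$, which is strictly weaker than the statement.
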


\begin{proof}
Assume, on the contrary, that there exist a net $(z_\alpha)$ in $\mathcal{B}(\mathcal{R})$ and a net $(w_\alpha)$ in $\mathcal{B}$ with the same index set such that $z_\alpha \triangle e' \leq w_\alpha \downarrow \mathbf{0}$. Say, $z_\alpha = \bigsqcup_{k=1}^{m_\alpha} s_{\alpha, k}$, where $s_{\alpha,k} = \bigcap_{j \in J_{\alpha,k}} \theta_{\alpha,k,j} r_j$, $\theta_{\alpha,k,j} = \pm 1$ and $J_{\alpha,k}$ are finite subsets of $I$. Set
$$
x_\alpha = z_\alpha \cap e', \, y_\alpha = z_\alpha \cap e'', \, u_\alpha = w_\alpha \cap e', \, v_\alpha = w_\alpha \cap e'', \, p_{\alpha,k} = s_{\alpha,k} \cap e', \, q_{\alpha,k} = s_{\alpha,k} \cap e''
$$
and observe that
$$
x_\alpha = \bigsqcup_{k=1}^{m_\alpha} p_{\alpha, k}, \,\,\, y_\alpha = \bigsqcup_{k=1}^{m_\alpha} q_{\alpha, k}, \,\,\, p_{\alpha,k} = \bigcap_{j \in J_{\alpha,k}} \theta_{\alpha,k,j} r_j', \,\,\, q_{\alpha,k} = \bigcap_{j \in J_{\alpha,k}} \theta_{\alpha,k,j} r_j'',
$$
$e' \setminus x_\alpha \leq u_\alpha \downarrow \mathbf{0}$ and $y_\alpha \leq v_\alpha \downarrow \mathbf{0}$. Now we define a function $\varphi: \mathcal{B}_e(\mathcal{R}_1) \to \mathcal{B}_e(\mathcal{R}_1)$ by setting
$$
\varphi \Bigl( \bigsqcup_{i=1}^n \bigcap_{j \in J_i} \theta_{i,j} r_j' \Bigr) = \bigsqcup_{i=1}^n \bigcap_{j \in J_i} \theta_{i,j} r_j''
$$
for all $n \in \mathbb N$, finite subsets $J_i$ of $I$ and signs $\theta_{i,j}$. Since $\mathcal{R}'$ and $\mathcal{R}''$ are pre-Rademacher families, $\varphi$ is an isomorphism. Choose any index $\alpha_0$ so that $v_{\alpha_0} < e''$ (such an index exists, because $v_\alpha \downarrow \mathbf{0}$). Since $x_\alpha \geq e' \setminus u_\alpha \uparrow e'$, one has that $\sup_{\alpha \geq \alpha_0} x_\alpha = e'$. Since $\varphi$ is an isomorphism and $\varphi(x_\alpha) = y_\alpha$, we obtain that $\sup_{\alpha \geq \alpha_0} y_\alpha = e''$, which contradicts the choice of $\alpha_0$, because $y_\alpha \leq v_\alpha \leq v_{\alpha_0} < e''$ for all $\alpha \geq \alpha_0$.
\end{proof}

\begin{lemma} \label{le:collecting}
Let $\mathcal{B}$ be a Boolean algebra, $0 < e \in \mathcal{B}$ and $e = e' \sqcup e''$ with $e', e'' >0$. Let $(r_i')_{i \in I}$ and $(r_i'')_{i \in I}$ be pre-Rademacher families on $e'$ and $e''$ respectively. Then $(r_i)_{i \in I_0}$ is a pre-Rademacher family on $e$, where $I_0 = I \cup \{I\}$, $r_i = r_i' \sqcup r_i''$ for $i \in I$ and $r_I = e'$. If, in addition, $(r_i')_{i \in I}$ and $(r_i'')_{i \in I}$ are
\begin{enumerate}
  \item minimal
  \item vanishing at infinity
  \item $\tau$-complete
  \item $\sigma$-complete
\end{enumerate}
on $e'$ and $e''$ respectively then so is $(r_i)_{i \in I_0}$ on $e$.
\end{lemma}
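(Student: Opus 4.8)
The plan is to verify each listed property of the glued family $(r_i)_{i\in I_0}$ in turn, reducing every assertion to the corresponding assertion about the two component families by exploiting the disjoint decomposition $e=e'\sqcup e''$. The one computation I will use repeatedly is that, for $\theta\in\{-1,1\}$ and $i\in I$, the sign operation inside $\mathcal{B}_e$ splits as $\theta r_i=(\theta r_i')\sqcup(\theta r_i'')$, where on the right the signs are taken inside $\mathcal{B}_{e'}$ and $\mathcal{B}_{e''}$; this is immediate from $r_i=r_i'\sqcup r_i''$ together with $e\setminus r_i=(e'\setminus r_i')\sqcup(e''\setminus r_i'')$.

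For the pre-Rademacher property I argue by a short case analysis on a finite $J\subseteq I_0$. Putting $J_0=J\cap I$, finite distributivity over the disjoint decomposition yields $\bigcap_{j\in J_0}\theta_j r_j=\bigl(\bigcap_{j\in J_0}\theta_j r_j'\bigr)\sqcup\bigl(\bigcap_{j\in J_0}\theta_j r_j''\bigr)$, and both summands are nonzero since $(r_i')$ and $(r_i'')$ are pre-Rademacher (for $J_0=\emptyset$ they are $e'$ and $e''$). If $I\notin J$ this already shows the particle is nonzero; if $I\in J$ with $\theta_I=1$ the particle meets $r_I=e'$ and equals the $e'$-summand, while if $\theta_I=-1$ it equals the $e''$-summand, nonzero in either case.

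The technical core is a decomposition of generated order-closed subalgebras, which I isolate as an auxiliary fact: for $\mathcal{S}'\subseteq\mathcal{B}_{e'}$ and $\mathcal{S}''\subseteq\mathcal{B}_{e''}$,
\[
\mathcal{B}_\tau\bigl(\mathcal{S}'\cup\mathcal{S}''\cup\{e'\}\bigr)=\bigl\{x'\sqcup x'':\,x'\in\mathcal{B}_\tau^{(e')}(\mathcal{S}'),\ x''\in\mathcal{B}_\tau^{(e'')}(\mathcal{S}'')\bigr\},
\]
with the closures on the right taken inside $\mathcal{B}_{e'}$ and $\mathcal{B}_{e''}$, and the same statement with $\tau$ replaced by $\sigma$. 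The inclusion $\supseteq$ holds because the right-hand set is a subalgebra containing the generators and is order closed: if $x_\alpha'\sqcup x_\alpha''\stackrel{\rm o}{\longrightarrow}z$ then, by order continuity of $\cap$, $z\cap e'=\lim x_\alpha'$ and $z\cap e''=\lim x_\alpha''$ lie in the respective closed factors and $z=(z\cap e')\sqcup(z\cap e'')$. For $\subseteq$, since $e'$ is a generator, the trace on $\mathcal{B}_{e'}$ of the left-hand algebra is a subalgebra of $\mathcal{B}_{e'}$ containing $\mathcal{S}'$, hence contains $\mathcal{B}_\tau^{(e')}(\mathcal{S}')$; symmetrically for $e''$. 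The delicate point here, which I expect to be the main obstacle, is that order (resp. $\sigma$-order) convergence of a net (resp. sequence) lying entirely in $\mathcal{B}_{e'}$ is the same whether computed in $\mathcal{B}_{e'}$ or in $\mathcal{B}_e$ (a downward net to $\mathbf{0}$ below $e'$ has the same infimum in both, since any $\mathcal{B}_e$-lower bound is itself $\leq e'$), which is what makes the trace order closed in $\mathcal{B}_{e'}$.

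With the decomposition in hand the remaining properties follow. For \textbf{minimality}: the index $I$ is handled directly by Lemma~\ref{le:strangeexhaust}, giving $e'=r_I\notin\mathcal{B}_\tau((r_i)_{i\in I})$; for fixed $i_0\in I$ note that $\{r_i:i\neq i_0\}\cup\{e'\}$ generates the same subalgebra as $\{r_i':i\neq i_0\}\cup\{r_i'':i\neq i_0\}\cup\{e'\}$, so the decomposition shows that $r_{i_0}=r_{i_0}'\sqcup r_{i_0}''$ lying in it would force $r_{i_0}'\in\mathcal{B}_\tau^{(e')}(\{r_i':i\neq i_0\})$, contradicting minimality of $(r_i')$. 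For \textbf{$\tau$- and $\sigma$-completeness}, applying the decomposition with $\mathcal{S}'=\{r_i'\}_{i\in I}$ and $\mathcal{S}''=\{r_i''\}_{i\in I}$ gives $\mathcal{B}_\tau((r_i)_{i\in I_0})=\mathcal{B}_{e'}\oplus\mathcal{B}_{e''}=\mathcal{B}_e$, and identically for $\sigma$. Finally, \textbf{vanishing at infinity} needs no decomposition: for infinite $J\subseteq I_0$ and any lower bound $w$ of $\{\theta_j r_j:j\in J\}$, discarding the possible index $I$ leaves an infinite $J_0\subseteq I$, and $w\cap e'\leq\bigcap_{j\in J_0}\theta_j r_j'=\mathbf{0}$ together with $w\cap e''\leq\bigcap_{j\in J_0}\theta_j r_j''=\mathbf{0}$ (the vanishing of the two component families) force $w=\mathbf{0}$.
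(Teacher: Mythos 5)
Your proof is correct, and it rests on the same basic mechanism as the paper's --- everything is split along $e=e'\sqcup e''$ using the fact that $e'=r_I$ is itself one of the generators --- but you organize it differently. The paper verifies each item by hand with explicit nets of particles: for completeness it builds a product net $z_{(\alpha,\beta)}=x_\alpha\sqcup y_\beta$ over the directed set $A\times B$ and checks that each summand is a particle of the glued family by writing $p_{\alpha,k}=r_I\cap\bigcap_j\theta_j r_j$ and $q_{\beta,k}=-r_I\cap\bigcap_j\theta_j r_j$; for minimality it intersects a hypothetical approximating net with $e'$. You instead isolate the direct-sum identity $\mathcal{B}_\tau\bigl(\mathcal{S}'\cup\mathcal{S}''\cup\{e'\}\bigr)=\bigl\{x'\sqcup x''\bigr\}$ once and read off minimality and both completeness statements from it, which is cleaner and makes the reusable content explicit; you also correctly flag and handle the one genuinely delicate point (that order convergence of a net lying in $\mathcal{B}_{e'}$ is the same computed in $\mathcal{B}_{e'}$ or in $\mathcal{B}_e$), which the paper glosses over. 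Your treatments of the pre-Rademacher property and of vanishing at infinity coincide with the paper's up to phrasing. One cosmetic slip: in the proof of your auxiliary decomposition you have the labels of the two inclusions interchanged --- the observation that the right-hand set is an order-closed subalgebra containing the generators proves $\subseteq$ (the left side being the smallest such subalgebra), while the trace argument proves $\supseteq$; since both arguments are present and valid, nothing is lost, but the labels should be swapped.
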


\begin{proof}
First we show that $(r_i)_{i \in I_0}$ is a pre-Rademacher family on $e$. Let $J \subseteq I_0$ be a finite subset and $\theta_j = \pm 1$ for all $j \in J$. If $I \notin J$ then in view of $r_i' \cap r_j'' = \mathbf{0}$ we obtain
$$
\bigcap_{j \in J} \theta_j r_j = \Bigl(\bigcap_{j \in J} \theta_j r_j'\Bigr) \sqcup \Bigl(\bigcap_{j \in J} \theta_j r_j''\Bigr) > \mathbf{0}.
$$
If $I \in J$ and $\theta_I = 1$ then
$$
\bigcap_{j \in J} \theta_j r_j = e' \cap \Bigl(\bigcap_{j \in J \setminus \{I\}} \theta_j r_j \Bigr) = \bigcap_{j \in J \setminus \{I\}} \theta_j r_j' > \mathbf{0}.
$$
Analogously, if $I \in J$ and $\theta_I = -1$ then $\bigcap_{j \in J} \theta_j r_j = \bigcap_{j \in J \setminus \{I\}} \theta_j r_j'' > \mathbf{0}.$ Thus, all finite particles are nonzero and hence, $(r_i)_{i \in I_0}$ is a pre-Rademacher family on $e$.

(1) The condition $r_I \notin \mathcal{B}_\tau\bigl((r_i)_{i \in I}\bigr)$ follows from Lemma~\ref{le:strangeexhaust}. Show that for any $i_0 \in I$ one has $r_{i_0} \notin \mathcal{B}_\tau\bigl((r_i)_{i \in I_0 \setminus \{i_0\}}\bigr)$. Assume, on the contrary, that there exists a net $(z_\alpha)$ in $\mathcal{B}\bigl((r_i)_{i \in I_0 \setminus \{i_0\}}\bigr)$ such that $z_\alpha \overset{\rm o}\longrightarrow r_{i_0}$. Then
\begin{equation} \label{eq:cmfnndi}
z_\alpha \cap e' \overset{\rm o}\longrightarrow r_{i_0} \cap e' = r_{i_0}'.
\end{equation}
Since $z_\alpha \cap e' \in \mathcal{B}\bigl((r_i')_{i \in I \setminus \{i_0\}}\bigr)$, \eqref{eq:cmfnndi} implies $r_{i_0}' \in \mathcal{B}_\tau\bigl((r_i')_{i \in I \setminus \{i_0\}}\bigr)$, which contradicts the assumption.

Let $J \subseteq I_0$ be an infinite subset and $\theta_j = \pm 1$ for all $j \in J$. Then
\begin{align*}
\bigcap_{j \in J} (\theta_j r_j)^+ &\leq \bigcap_{j \in J \setminus \{I\}} (\theta_j r_j)^+ = \bigcap_{j \in J \setminus \{I\}} (\theta_j r_j')^+ + \bigcap_{j \in J \setminus \{I\}} (\theta_j r_j'')^+ = 0 + 0 = 0.
\end{align*}
So, all infinite particles are zero, and thus $(r_i)_{i \in I_0}$ is a Rademacher family on $e$.

(2) Let $J \subseteq I_0$ be an infinite subset and $(\theta_j)_{j \in J}$ a collection of signs. If $I \in I_0$ and $\theta_I = 1$ then $\bigcap_{j \in J} \theta_j r_j = \bigcap_{j \in J \setminus \{I\}} \theta_j r_j' = \mathbf{0}$, because $(r_i')_{i \in I}$ vanishes at infinity and $J \setminus \{I\}$ is infinite. Analogously, if $I \in I_0$ and $\theta_I = -1$ then $\bigcap_{j \in J} \theta_j r_j = \bigcap_{j \in J \setminus \{I\}} \theta_j r_j'' = \mathbf{0}$. Finally, if $I \notin I_0$ then $J \subseteq I$ and hence
$$
\bigcap_{j \in J} \theta_j r_j = \bigcap_{j \in J} \theta_j r_j' \sqcup \bigcap_{j \in J} \theta_j r_j'' = \mathbf{0} \sqcup \mathbf{0} = \mathbf{0}.
$$

(3) Given any $z \in \mathcal{B}_e$, we split $z = x \sqcup y$, where $x = z \cap e'$ and $y = z \cap e''$. Using the assumption, choose a net $(x_\alpha)_{\alpha \in A}$ in $\mathcal{B} \bigl( (r_i')_{i \in I} \bigr)$, a net $(u_\alpha)_{\alpha \in A}$ in $\mathcal{B}$ with $x_\alpha \triangle x \leq u_\alpha \downarrow \mathbf{0}$, a net $(y_\beta)_{\alpha \in B}$ in $\mathcal{B} \bigl( (r_i'')_{i \in I} \bigr)$ and a net $(v_\beta)_{\alpha \in B}$ in $\mathcal{B}$ with $y_\beta \triangle y \leq v_\beta \downarrow \mathbf{0}$. Say,
$$
x_\alpha = \bigsqcup_{k=1}^{m_\alpha} p_{\alpha,k}, \,\, p_{\alpha,k} = \bigcap_{j \in J_{\alpha,k}'} \theta_{\alpha,k,j} r_j' \,\,\,\,\, \mbox{and} \,\,\,\,\, y_\beta = \bigsqcup_{k=1}^{n_\beta} q_{\beta,k}, \,\, q_{\beta,k} = \bigcap_{j \in J_{\beta,k}''} \theta_{\beta,k,j} r_j'',
$$
where $J_{\alpha,k}'$ and $J_{\beta,k}''$ are finite subsets of $I$ and $\theta_{\alpha,k,j}$, $\theta_{\beta,k,j}$ are signs. Observe that $C = A \times B$ is a directed set with respect to the order $(\alpha',\beta') \leq (\alpha'', \beta'')$ if and only if $\alpha' \leq \alpha''$ and $\beta' \leq \beta''$. We set $z_{(\alpha,\beta)} = x_\alpha \sqcup y_\beta$ and prove that $z_{(\alpha,\beta)} \stackrel{\rm o}{\longrightarrow} z$ and $z_{(\alpha,\beta)} \in \mathcal{B} \bigl( (r_i)_{i \in I_0} \bigr)$. Indeed,
$$
z_{(\alpha,\beta)} \triangle (u_\alpha \sqcup v_\beta) = (x_\alpha \triangle x) \sqcup (y_\beta \triangle y) \leq u_\alpha \sqcup v_\beta \overset{\text{def}}{=} w_{(\alpha,\beta)} \downarrow \mathbf{0}.
$$

It remains to observe that
$$
p_{\alpha,k} = r_I \cap \bigcap_{j \in J_{\alpha,k}'} \theta_{\alpha,k,j} r_j \,\,\,\,\, \mbox{and} \,\,\,\,\, q_{\beta,k} = -r_I \cap \bigcap_{j \in J_{\beta,k}''} \theta_{\beta,k,j} r_j
$$
are particles with respect to $(r_i)_{i \in I_0}$.

(4) The proof is similar to that of (3).
\end{proof}

\begin{proof}[Proof of Theorem~\ref{thm:5eqcardex}]
For simplicity of notation, with no loss of generality we assume that $\mathcal{B}_1 = \widehat{\mathcal{B}}$. We consider two cases depending on whether $\mathcal{A}_0$ is finite or infinite. Let first $\mathcal{A}_0$ be finite, say, $\mathcal{A}_0 = \{a_1, \ldots, a_m\}$ with $a_i \neq a_j$ for $i \neq j$. Decompose $\mathbb N = N_1 \sqcup \ldots \sqcup N_m$ onto infinite subsets $N_k$. Define a function $\phi: \mathbb N \to \{1, \ldots, m\}$ so that $n \in N_{\phi(n)}$ and set $r_n = (\widehat{r}_n,a_{\phi(n)}) \in \mathcal{B}$ for all $n \in \mathbb N$. We prove that $(r_n)_{n=1}^\infty$ possesses the desired properties. If $J \subseteq \mathbb N$ is a finite subsets and $\theta_j = \pm 1$, $j \in J$ then
$$
\bigcap_{j \in J} \theta_j r_j = \Bigl( \bigcap_{j \in J} \theta_j \widehat{r}_j, \bigcap_{j \in J} \theta_j a_{k(n)} \Bigr) \neq \mathbf{0}
$$
because $\bigcap_{j \in J} \theta_j \widehat{r}_j \neq \mathbf{0}$. So, $(r_n)_{n=1}^\infty$ is a pre-Rademacher family. A similar coordinate-wise argument shows that $(r_n)_{n=1}^\infty$ is minimal. We prove that $(r_n)_{n=1}^\infty$ is complete. Let $z = (x,y) \in \mathcal{B}$ be arbitrary. Choose a sequence $(x_n)_{n=1}^\infty$ in $\mathcal{B} \bigl( (\widehat{r}_n)_{n=1}^\infty\bigr)$ such that $x_n \stackrel{\rm o}{\longrightarrow} x$. Say,
$$
x_n = \bigsqcup_{k=1}^{m_n} \bigcap_{j \in J_{n,k}} \theta_{n,k,j} \widehat{r}_j,
$$
where $\theta_{n,k,j} = \pm 1$ and $J_{n,k} \subseteq \mathbb N$ are finite subsets.

Given any $t \in \mathcal{B}_2$, by $I(t)$ we denote the subset of $\{1, \ldots, m\}$ such that $t = \bigsqcup_{i \in I(t)} a_i$. Then for every $t \in \mathcal{B}_2$ one has
\begin{equation} \label{eq:ajshf}
\bigcup_{i \in I(t)} \bigcap_{j \in N_{\phi(i)}} r_j = \bigcup_{i \in I(t)} \Bigl( \bigcap_{j \in N_{\phi(i)}} \widehat{r}_j, \bigcap_{j \in N_{\phi(i)}} a_i \Bigr) = \bigcup_{i \in I(t)} (\mathbf{0}, a_i) = (\mathbf{0},t).
\end{equation}

Set
$$
y_n =  \bigsqcup_{k=1}^{m_n} \bigcap_{j \in J_{n,k}} \theta_{n,k,j} a_{\phi(n)} \,\,\,\,\, \mbox{and} \,\,\,\,\, z_n = (x_n,y).
$$

Then $z_n = \bigl((x_n,y_n) \cup (\mathbf{0},y) \bigr) \setminus (\mathbf{0},y_n)$ and $z_n \stackrel{\rm o}{\longrightarrow} z$. By \eqref{eq:ajshf}, $(\mathbf{0},y), (\mathbf{0},y_n) \in \mathcal{B}_\sigma \bigl( (r_j)_{j=1}^\infty \bigr)$. Taking into account that $(x_n,y_n) = \bigsqcup_{k=1}^{m_n} \bigcap_{j \in J_{n,k}} \theta_{n,k,j} r_j$, we deduce that $z_n \in \mathcal{B}_\sigma \bigl( (r_j)_{j=1}^\infty \bigr)$. Thus, the pre-Rademacher family $(r_j)_{j=1}^\infty$ is complete.
\end{proof}

Remark that the property of a family to be Rademacher is weaker than the property to be independent in the sense of probability, which depends on a measure. More precisely, a Rademacher family is independent in the sense of probability with respect to a suitable measure, as we see later.

\begin{thm} \label{th:ccvmn740}
Let $\mathcal{R} = (r_i)_{i \in I}$ be a minimal pre-Rademacher family in a Boolean algebra $\mathcal{B}$. Then there is a unique countably additive probability measure $\mu$ on $\mathcal{B}_\sigma(\mathcal{R})$ satisfying \eqref{eq:main}. If, moreover, $\mathcal{R}$ vanishes at infinity (in other words, is a Rademacher family) then $\mu$ is positive.
\end{thm}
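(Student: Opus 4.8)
The plan is to transport the finitely additive dyadic measure furnished by Theorem~\ref{thm:dyadicfam} to a genuinely countably additive one through the Cantor-cube model, and then to isolate the exact role of minimality. First I would record that, all particles being nonzero, $\mathcal B(\mathcal R)$ is free on $(r_i)_{i\in I}$: by Proposition~\ref{pr:jfggdd5567} and Proposition~\ref{pr:semigen}(1) it is the algebra of finite disjoint unions of particles, and the assignment $\bigcap_{j\in J}\theta_j r_j\mapsto\{x\in D^{I}:\ x(j)=\theta_j\ (j\in J)\}$ (with $D=\{-1,1\}$ and $\omega_\alpha=|I|$) is a Boolean isomorphism of $\mathcal B(\mathcal R)$ onto the clopen algebra of $D^{I}$, i.e. onto the lift of the subalgebra generated by the standard Rademacher family $(\overline r_\gamma)_{\gamma<\omega_\alpha}$. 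Under this identification \eqref{eq:main} says precisely that $\mu$ is the restriction to clopen sets of the Haar measure; since clopen sets are compact, $\mu$ is automatically countably additive there and extends uniquely to $\nu=\mu_{\omega_\alpha}$ on $\Sigma_{\omega_\alpha}$. The whole problem is thus to push $\nu$ forward, along the tautological correspondence of generators $\{x(i)=1\}\leftrightarrow r_i$, onto $\mathcal B_\sigma(\mathcal R)$.

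Concretely, I would aim to produce a surjective $\sigma$-homomorphism $h:\Sigma_{\omega_\alpha}\to\mathcal B_\sigma(\mathcal R)$ extending the isomorphism above on clopen sets, using that $\mathcal B_\sigma(\mathcal R)$ is exactly the set of order limits of sequences from $\mathcal B(\mathcal R)$ (the sequential analogue of Proposition~\ref{pr:semigen}(2), recalled in the Preliminaries). Granting $h$, the candidate is $\bar\mu(h(E)):=\nu(E)$, and this is well defined and countably additive precisely when $\ker h\subseteq\{E:\ \nu(E)=0\}$; equivalently, when $\mu$ is \emph{$\sigma$-smooth relative to $\mathcal B$}, i.e. $\inf_{\mathcal B}a_n=\mathbf 0$ forces $\mu(a_n)\to 0$ for every decreasing sequence $(a_n)$ in $\mathcal B(\mathcal R)$. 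I would stress that this is strictly weaker than countable additivity of $\mu$ on $\mathcal B(\mathcal R)$, since the infima are computed in $\mathcal B$ and not in $\mathcal B(\mathcal R)$ — which is exactly why $\mu$ may fail to be countably additive on $\mathcal B(\mathcal R)$ and yet still extend. Uniqueness then comes for free: any two countably additive extensions agree on $\mathcal B(\mathcal R)$ by Theorem~\ref{thm:dyadicfam}, and the set on which they agree is closed under order limits of monotone sequences, hence $\sigma$-order closed and therefore contains $\mathcal B_\sigma(\mathcal R)$.

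The main obstacle is the inclusion $\ker h\subseteq\{E:\ \nu(E)=0\}$, that is, ruling out a \emph{positive-measure collapse}: a set $E\in\Sigma_{\omega_\alpha}$ with $\nu(E)>0$ but $h(E)=\mathbf 0$. This is exactly what minimality forbids. When $\mathcal R$ is not minimal the collapse really happens: an element lying in $\mathcal B_\tau$ of the remaining ones, as in the extension built in Theorem~\ref{th:kjhgf5}, yields two representatives in $\Sigma_{\omega_\alpha}$ of the same element of $\mathcal B_\sigma(\mathcal R)$ whose symmetric difference has positive $\nu$-measure but maps to $\mathbf 0$, and then no countably additive measure satisfying \eqref{eq:main} can exist. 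To convert minimality into the required inclusion I would exploit the independence built into the gluing Lemma~\ref{le:strangeexhaust}: the hypothesis $r\notin\mathcal B_\tau(\mathcal R\setminus\{r\})$ is precisely what excludes a positive-measure functional dependence among the generators, forcing any set that collapses to $\mathbf 0$ to be $\nu$-negligible. Carrying this out in full — equivalently, verifying the $\sigma$-smoothness condition directly for decreasing sequences of finite disjoint unions of particles — is the technical heart of the argument, and I expect it to be the step that resists routine manipulation.

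Finally, for positivity under the vanishing-at-infinity hypothesis I would establish the reverse inclusion $\{E:\ \nu(E)=0\}\subseteq\ker h$, which makes $\bar\mu$ strictly positive, since then $h(E)\neq\mathbf 0$ implies $\nu(E)>0$. The $\nu$-null sets that threaten this are the branch-type intersections $\bigcap_{j\in J}\{x(j)=\theta_j\}$ over infinite $J$ together with the sets generated from them; their images under $h$ are exactly the infinite particles $\bigcap_{j\in J}\theta_j r_j$, which vanishing at infinity forces to equal $\mathbf 0$. Thus vanishing at infinity collapses precisely the null ideal and leaves $\bar\mu$ positive. In the language already available, by Proposition~\ref{pr:cons} and Theorem~\ref{th:eqcard} vanishing at infinity makes $\mathcal B_\sigma(\mathcal R)$ atomless, so a nonzero null element would have to be ``supported'' on such a branch set, contradicting $\bar\mu(x)=0$; I expect the verification that these branch sets generate the whole null ideal modulo $\ker h$ to require the same control over infinite particles that defines a Rademacher family.
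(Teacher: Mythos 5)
Your high-level strategy (identify $\mathcal B(\mathcal R)$ with the clopen algebra of the Cantor cube, get countable additivity there for free, extend to the Haar measure, and push forward onto $\mathcal B_\sigma(\mathcal R)$) is parallel in spirit to the paper's, which also works by transferring the problem to a concrete measure-theoretic model. But the two steps you yourself flag as ``the technical heart'' are exactly the content of the theorem, and the mechanisms you propose for them would not go through. The first gap is the existence of the surjective $\sigma$-homomorphism $h:\Sigma_{\omega_\alpha}\to\mathcal B_\sigma(\mathcal R)$. Sikorski-type extension of a homomorphism from the clopen algebra to all Baire sets requires the target to be a Dedekind $\sigma$-complete Boolean algebra; here $\mathcal B$ is arbitrary and $\mathcal B_\sigma(\mathcal R)$ --- the set of order limits of sequences from $\mathcal B(\mathcal R)$ inside $\mathcal B$ --- need not be $\sigma$-complete. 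Example~\ref{ex:ssjhd} even exhibits a pre-Rademacher family whose infinite particles have no infimum at all, so a map defined on all Baire cylinders and preserving countable infima simply cannot exist in general. The paper avoids this by going in the opposite direction: it represents $\mathcal B_\sigma(\mathcal R)$ as an algebra $\Sigma$ of genuine sets via the Stone theorem, embeds it into the generated $\sigma$-algebra of sets $\Sigma_0$, proves countable additivity of the dyadic measure on the subalgebra $\Sigma_0(\mathcal S)$ generated by the images, extends by Carath\'eodory inside $\Sigma_0$, and only then pulls the measure back.

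The second gap is your treatment of the $\sigma$-smoothness condition and of positivity. You attribute the exclusion of ``positive-measure collapse'' to minimality via Lemma~\ref{le:strangeexhaust}, but in the paper's proof the corresponding step (countable additivity of $\nu'$ on the set algebra $\Sigma_0(\mathcal S)$) does not invoke minimality at all: it uses only the pre-Rademacher property, the fact that one is now working with set-theoretic intersections, and a transfer to the usual Rademacher family on $[0,1)$ via Lemma~\ref{le:tech2}. The decisive combinatorial device, absent from your sketch, is the decomposition of $\bigcap_n x_n$ into \emph{countably many} pieces $y_\lambda$ indexed by the choices $\lambda_n\le m_n$ of a summand at each level, each piece being an infinite particle and hence Lebesgue-null on the $[0,1)$ side; without it the implication ``$\inf_{\mathcal B}a_n=\mathbf 0\Rightarrow\mu(a_n)\to0$'' is not verified. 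Likewise, for positivity your claim that the infinite branch sets $\bigcap_{j\in J}\{x(j)=\theta_j\}$ generate the whole null ideal is false (a dense null $G_\delta$ in $D^{\omega}$ is not covered by countably many such cylinders); what is actually needed, and what the paper proves, is that the image of any null element of $\mathcal B_\sigma(\mathcal R)$ decomposes into countably many sets, each contained in an infinite particle, which vanishing at infinity then forces to be $\mathbf 0$. As written, the proposal correctly frames the problem but does not prove either of its two essential steps.
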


Before the proof, we remark that the usual way to start the proof with showing that the dyadic finitely additive measure $\mu$ on $\mathcal{B}(\mathcal{R})$ the existence and uniqueness of which is already established in Theorem~\ref{thm:dyadicfam} is countably additive is false. Moreover, $\mu$ is not countably additive on $\mathcal{B}(\mathcal{R})$ for natural examples. Indeed, we show that the restriction $\mu_0 = \mu |_{\widehat{\mathcal{B}}(\widehat{\mathcal{R}})}$ of the Lebesgue measure $\mu$, which is countably additive on $\widehat{\mathcal{B}}$, to the subalgebra $\widehat{\mathcal{B}}(\widehat{\mathcal{R}})$ generated by the usual Rademacher family $\widehat{\mathcal{R}}$, is not countably additive. To do this, we provide an example of a sequence $(x_n)_{n=1}^\infty$ in $\widehat{\mathcal{B}}(\widehat{\mathcal{R}})$ with $x_{n+1} \leq x_n$ for all $n$, $\inf_n x_n = 0$ in $\widehat{\mathcal{B}}(\widehat{\mathcal{R}})$ (but not in $\widehat{\mathcal{B}}$!) with $\mu(x_n) \geq 1/2$ for all $n$, which is enough by the well known and easily proved fact \cite[326~F(c)]{F3}. Let $(I_n)_{n=1}^\infty$ be any numeration of the dyadic intervals. For all $n \in \mathbb N$ choose a dyadic interval $I_{k_n} \subseteq I_n$ of measure $\mu(I_{k_n}) \leq 2^{-n-1}$ and set $x_m = [0,1) \setminus \bigcup_{n=1}^m I_{k_n}$. Then $x_{n+1} \leq x_n$ for all $n$ and
$$
\mu(x_m) \geq 1 - \sum{n=1}^m \mu(I_{k_n}) \geq 1 - \sum{n=1}^m \frac{1}{2^{n+1}} > 1 - \frac12 = \frac12.
$$

Prove that $\inf_n x_n = 0$ in $\widehat{\mathcal{B}}(\widehat{\mathcal{R}})$. Let $z \in \widehat{\mathcal{B}}(\widehat{\mathcal{R}})$ be an upper bound for $\{x_n: \, n \in \mathbb N\}$. Assume on the contrary that $z > 0$. Then there exists a dyadic interval $I_m$ such that $I_m \leq z$, and hence $I_{k_m} \subseteq I_m \leq z \leq x_m$, which contradicts the choice of $x_m$.

We need two technical lemmas.

\begin{lemma} \label{le:tech1}
Let $\mathcal{R} = (r_i)_{i \in I}$ be a pre-Rademacher system in a Boolean algebra $\mathcal{B}$. Then any element $x \in \mathcal{B}(\mathcal{R})$ has an expansion
\begin{equation} \label{eq:aev85}
x = \bigsqcup_{k=1}^m \bigcap_{j \in J} \theta_{k,j} r_j,
\end{equation}
where $m \in \mathbb N$, $J$ is a finite subset of $I$ and $\theta_{k,j} = \pm 1$.
\end{lemma}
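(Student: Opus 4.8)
The plan is to first obtain a representation of $x$ as a finite disjoint union of particles with possibly different index sets, and then to force all of them onto a single common finite index set $J$ by refining. Since $\overline{\mathcal{P}}$ is a semialgebra that generates $\mathcal{B}(\mathcal{R})$ (Proposition~\ref{pr:jfggdd5567}), item (1) of Proposition~\ref{pr:semigen} gives an expansion $x = \bigsqcup_{k=1}^{m} p_k$ into particles $p_k = \bigcap_{j \in J_k} \theta_{k,j} r_j$, where each $J_k$ is a finite subset of $I$. The only feature of this expansion that does not yet match \eqref{eq:aev85} is that the index sets $J_k$ need not coincide.

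Next I would set $J = \bigcup_{k=1}^{m} J_k$, which is again a finite subset of $I$, and refine each $p_k$ over the coordinates it is missing. Concretely, applying \eqref{eq:beg} to the finite family $\{r_i : i \in J \setminus J_k\}$ and intersecting with $p_k$ yields
$$
p_k = \bigsqcup_{(\eta_i)_{i \in J \setminus J_k}} \Bigl( \bigcap_{j \in J_k} \theta_{k,j} r_j \cap \bigcap_{i \in J \setminus J_k} \eta_i r_i \Bigr),
$$
a disjoint union of particles each of which now ranges over the full index set $J$. Substituting these refinements into $x = \bigsqcup_{k} p_k$ expresses $x$ as a disjoint union of particles all indexed by the single set $J$, which after relabeling is exactly the form \eqref{eq:aev85}.

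The step that needs a word of justification is that the resulting collection is genuinely disjoint and hence legitimately written with $\bigsqcup$. This is automatic: by \eqref{eq:beg} the particles $\bigcap_{j \in J} \theta_j r_j$ over a fixed full index set $J$ are pairwise disjoint across distinct sign patterns (any two differ in some coordinate $j$, and $r_j \cap -r_j = \mathbf{0}$), while refinements coming from two different $p_k, p_{k'}$ lie below the disjoint particles $p_k, p_{k'}$ and so cannot collide. Equivalently, one may phrase the whole argument structurally: $x$ belongs to the subalgebra $\mathcal{B}(\{r_j : j \in J\})$ generated by the finitely many $r_j$, $j \in J$; because $\mathcal{R}$ is pre-Rademacher every particle over $J$ is nonzero, so these particles are precisely the atoms of this finite subalgebra, and $x$ is the union of the atoms it contains.

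I do not expect a genuine obstacle here; the content is entirely the bookkeeping of unifying the index sets, and the pre-Rademacher hypothesis enters only to guarantee the nonvanishing of particles, which makes the atom description clean. The convention that the empty disjoint union equals $\mathbf{0}$ takes care of the degenerate case $x = \mathbf{0}$.
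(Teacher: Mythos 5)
Your proposal is correct and follows essentially the same route as the paper: obtain $x$ as a finite disjoint union of particles (via the particle semialgebra and Proposition~\ref{pr:semigen}), set $J$ to be the union of the index sets, and refine each particle over its missing coordinates using \eqref{eq:beg}. Your extra remarks on disjointness and the atom description of the finite subalgebra are sound but not needed beyond what the paper's substitution argument already gives.
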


\begin{proof}
Assume $x \in \mathcal{B}(\mathcal{R})$, say,
\begin{equation} \label{eq:aev86}
x = \bigsqcup_{i=1}^n \bigcap_{j \in J_i} \theta_{i,j} r_j,
\end{equation}
where $n \in \mathbb N$, $J_i$ are finite subsets of $I$ and $\theta_{i,j} = \pm 1$. Set $J = \bigcup_{i=1}^n J_k$. Then by \eqref{eq:beg}, for every $i = 1, \ldots, n$ one has
\begin{equation} \label{eq:aev87}
\bigcap_{j \in J_i} \theta_{i,j} r_j = \bigsqcup_{\epsilon_j = \pm 1} \bigcap_{j \in J_i} \theta_{i,j} r_j \cap \bigcap_{j \in J \setminus J_i} \epsilon_j r_j,
\end{equation}
where the disjoint sum is taken over all possible collections of signs $(\epsilon_j)_{j \in J \setminus J_i}$. Substituting \eqref{eq:aev87} to \eqref{eq:aev86} we obtain \eqref{eq:aev85}.
\end{proof}

\begin{lemma} \label{le:tech2}
Let $\mathcal{R'} = (r_i')_{i \in I}$ and $\mathcal{R'} = (r_i')_{i \in I}$ be pre-Rademacher system in Boolean algebra $\mathcal{B}'$ and $\mathcal{B}'$ respectively. Then the function $\varphi: \mathcal{B}'(\mathcal{R}') \to \mathcal{B}''(\mathcal{R}'')$ defined by
$$
\varphi \Bigl( \bigsqcup_{k=1}^m \bigcap_{j \in J} \theta_{k,j} r_j' \Bigr) = \bigsqcup_{k=1}^m \bigcap_{j \in J} \theta_{k,j} r_j''
$$
for all $m \in \mathbb N$, finite subsets $J$ of $I$ and signs $\theta_{k,j} = \pm 1$, is order preserving, that is, for every $x,y \in \mathcal{B}'(\mathcal{R}')$ the inequality $x \leq y$ implies $\varphi(x) \leq \varphi(y)$.
\end{lemma}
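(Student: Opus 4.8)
The plan is to reduce every element to a canonical ``sign-set'' form over a common finite index set, and then to show that for a pre-Rademacher family the order relation is completely encoded by inclusion of these sign-sets. Since the sign-sets live in the index set $I$ alone and are read off identically from the primed and the double-primed families, order preservation will follow at once.

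First I would fix $x, y \in \mathcal{B}'(\mathcal{R}')$ with $x \leq y$ and, using Lemma~\ref{le:tech1}, write each of them as a finite disjoint union of particles. Taking $J$ to be the finite union of the index sets occurring in the two expansions and refining both expansions by \eqref{eq:beg}, I may assume a common $J$, so that
$$
x = \bigsqcup_{\theta \in S} \bigcap_{j \in J} \theta_j r_j', \qquad y = \bigsqcup_{\theta \in T} \bigcap_{j \in J} \theta_j r_j',
$$
where $S, T \subseteq \{-1,1\}^J$ and $\theta = (\theta_j)_{j \in J}$ runs over sign vectors. Writing $p_\theta' = \bigcap_{j \in J} \theta_j r_j'$ and $p_\theta'' = \bigcap_{j \in J} \theta_j r_j''$, the definition of $\varphi$ gives $\varphi(x) = \bigsqcup_{\theta \in S} p_\theta''$ and $\varphi(y) = \bigsqcup_{\theta \in T} p_\theta''$.

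The heart of the argument is that, because $\mathcal{R}'$ is pre-Rademacher, the particles $\{ p_\theta' : \theta \in \{-1,1\}^J \}$ are all nonzero, are pairwise disjoint, and by \eqref{eq:beg} partition $\mathbf{1}$. For any disjoint family of nonzero elements one has $\bigsqcup_{\theta \in S} p_\theta' \leq \bigsqcup_{\theta \in T} p_\theta'$ if and only if $S \subseteq T$: the implication $\Leftarrow$ is trivial, while for $\Rightarrow$, if some $\theta_0 \in S \setminus T$ existed then $p_{\theta_0}' \leq x \leq y$, whence $p_{\theta_0}' = p_{\theta_0}' \cap y = \bigsqcup_{\theta \in T}\bigl( p_{\theta_0}' \cap p_\theta' \bigr) = \mathbf{0}$ by disjointness, contradicting $p_{\theta_0}' \neq \mathbf{0}$. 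In particular $S = \{ \theta : p_\theta' \leq x \}$ is uniquely determined by $x$ and $J$, and passing to a larger index set replaces $S$ consistently on both sides, since each $p_\theta'$ (respectively $p_\theta''$) splits into the \emph{same} set of refined sign vectors via \eqref{eq:beg}. This is precisely what makes $\varphi$ well defined, independently of the chosen expansion.

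Finally, applying the same disjointness criterion to the double-primed family---which is pre-Rademacher as well, so the $p_\theta''$ are again nonzero and pairwise disjoint---yields $\varphi(x) = \bigsqcup_{\theta \in S} p_\theta'' \leq \bigsqcup_{\theta \in T} p_\theta'' = \varphi(y)$ precisely because $S \subseteq T$. The step I expect to be the main obstacle is the bookkeeping around the dependence on $J$: one must verify that reducing to a common index set and refining via \eqref{eq:beg} does not change the value of $\varphi$, and this is exactly where the pre-Rademacher hypothesis (every particle nonzero) is indispensable, for it alone guarantees that the sign-set $S$ is an intrinsic invariant of $x$ that the two algebras share verbatim.
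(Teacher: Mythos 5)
Your proof is correct and rests on the same underlying observation as the paper's: for a pre-Rademacher family, the order relations among particles are encoded purely in the index sets and signs, hence identically for $\mathcal{R}'$ and $\mathcal{R}''$ (the paper states this as the equivalence of the two particle-to-particle inclusions \eqref{eq:adjn1} and \eqref{eq:adjn2} and then says the lemma is ``easily deduced''). Your common-refinement via \eqref{eq:beg} and the sign-set criterion $x \leq y \Leftrightarrow S \subseteq T$ is exactly the bookkeeping that deduction requires, so you have in fact written out the step the paper leaves implicit.
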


\begin{proof}
Observe that for any finite subsets $J', J'' \subset I$ and any signs $\theta_j' = \pm 1$, $j \in J'$ and $\theta_j'' = \pm 1$, $j \in J''$ the following inclusions are equivalent
\begin{equation} \label{eq:adjn1}
\bigcap_{j \in J'} \theta_j' r_j' \subseteq \bigcap_{j \in J''} \theta_j'' r_j'
\end{equation}
and
\begin{equation} \label{eq:adjn2}
\bigcap_{j \in J'} \theta_j' r_j'' \subseteq \bigcap_{j \in J''} \theta_j'' r_j'',
\end{equation}
because each of the conditions is equivalent to the next one: $J'' \subseteq J'$ and $\theta_j'' = \theta_j'$ for all $j \in J''$. Using the equivalence of \eqref{eq:adjn1} and \eqref{eq:adjn2}, one can easily deduce the lemma statement.
\end{proof}

\begin{proof}[Proof of Theorem~\ref{th:ccvmn740}]
If $I$ is finite then the theorem statement is obvious. Let $I$ be infinite and $\Sigma$ be an algebra of subsets of a set $\Omega$ isomorphic to $\mathcal{B}_\sigma(\mathcal{R})$ (such a set and an algebra exist by the Stone representation theorem) and let $\varphi: \mathcal{B}_\sigma(\mathcal{R}) \to \Sigma$ be an isomorphism. Let $\Sigma_0$ be the minimal $\sigma$-algebra of subsets of $\Omega$ including $\Sigma$. Then $s_i = \varphi(r_i)$, $i \in I$ is a minimal pre-Rademacher family in $\Sigma_0$. Using that $\Sigma_0$ is an algebra of subsets, we show that there is a unique (not necessarily positive) countably additive probability measure $\nu$ on $\Sigma_0$ satisfying
\begin{equation} \label{eq:mainprime}
\nu \Bigl( \bigcap_{k=1}^n \theta_k s_{i_k} \Bigr) = \frac{1}{2^n}
\end{equation}
for all finite collections of distinct indices $i_1, \ldots, i_n \in I$ and all collections of signs $\theta_1, \ldots, \theta_n \in \{-1,1\}$. Set $\mathcal{S} = (s_i)_{i \in I}$ and let $\nu'$ be the dyadic measure on the subalgebra $\Sigma_0(\mathcal{S})$ of $\Sigma_0$ generated by $\mathcal{S}$ which is finitely additive by Theorem~\ref{thm:dyadicfam}. Then \eqref{eq:mainprime} holds for $\nu'$ instead of $\nu$, and so our goal is to prove the existence and uniqueness of a countably additive extension $\nu$ of $\nu'$ to $\Sigma_0$.

First we prove that $\nu'$ is countably additive on $\Sigma_0(\mathcal{S})$. Indeed, let $(x_n)_{n=1}^\infty$ be a sequence in $\Sigma_0(\mathcal{S})$ with $x_{n+1} \subseteq x_n$ and $\bigcap_{n=1}^\infty x_n = \emptyset$. Say,
$$
x_n = \bigsqcup_{k=1}^{m_n} \bigcap_{j \in J_n} \theta_{n,k,j} s_j,
$$
where $m_n \in \mathbb N$, $J_n$ are finite subsets of $I$ and $\theta_{n,k,j} = \pm 1$ (see Lemma~\ref{le:tech1}). Using that $J_0 = \bigcup_{n=1}^\infty J_n$ is at most countable, we choose a countable $I_0 = \{i_n: \, n \in \mathbb N\} \subseteq I$ including $J_0$ with distinct $i_n$'s. Let $J_n'$ be the finite subset of $\mathbb N$ consisting of all $j \in \mathbb N$ such that $i_j \in J_n$. Then
\begin{equation} \label{eq:adjn}
x_n = \bigsqcup_{k=1}^{m_n} \bigcap_{j \in J_n'} \theta_{n,k,i_j} s_{i_j}.
\end{equation}
Let $(r_n)_{n=1}^\infty$ be the family of subsets of $[0,1)$ defined by \eqref{eqzxxxm} (by the obvious reason, with no loss of generality we may and do assume that $\mathbb N \cap I = \emptyset$, not to mix the elements of $(r_i)_{i \in I}$ and $(r_n)_{n=1}^\infty$). We set
$$
y_n = \bigsqcup_{k=1}^{m_n} \bigcap_{j \in J_n'} \theta_{n,k,i_j} r_j.
$$

By Lemma~\ref{le:tech2}, $y_{n+1} \subseteq y_n$ for all $n \in \mathbb N$. Now set $y = \bigcap_{n=1}^\infty y_n$ and $\Lambda = \bigl\{ (\lambda_n)_{n=1}^\infty: \, \lambda_n \in \{1, \ldots, m_n\} \bigr\}$. Given any $t \in y$ and $n \in \mathbb N$, by $k_n(t)$ we denote the integer $k \leq m_n$ such that $t \in \bigcap_{j \in J_n'} \theta_{n,k,i_j} r_j$. Then for any $\lambda = (\lambda_n)_{n=1}^\infty \in \Lambda$ we set $y_\lambda = \bigl\{ t \in y: \, (\forall n \in \mathbb N) \, k_n(t) = \lambda_n \bigr\}$ and observe that $y = \bigsqcup_{\lambda \in \Lambda} y_\lambda$. Let $\mu_0$ denote the Lebesgue measure on the Borel subsets of $[0,1)$. Since $\Lambda$ is countable, to show that $\mu_0(y) = 0$, it is enough to prove $\mu_0(y_\lambda) = 0$ for all $\lambda \in \Lambda$. By the definition of $y_\lambda$, one has
$$
y_\lambda = \bigcap_{n=1}^\infty \bigcap_{j \in J_n'} \theta_{n, \lambda_n, j} r_j.
$$
Taking another notation we get $y_\lambda = \bigcap_{j \in I_\lambda} \epsilon_j r_j$ for some subset $I_\lambda$ of $\mathbb N$ and signs $\epsilon_j = \pm 1$. Observe that $I_\lambda$ is infinite, because otherwise passing back to $x_n$'s we would obtain
$$
\emptyset \neq \bigcap_{j \in I_\lambda} \epsilon_j s_{i_j} \subseteq \bigcap_{n=1}^\infty x_n = \emptyset,
$$
a contradiction. Since $I_\lambda$ is infinite, $\mu_0(y_\lambda) = 0$ for all $\lambda \in \Lambda$ and, as a consequence, $\mu_0(y) = 0$. By the $\sigma$-additivity of the Lebesgue measure, $\lim_{n \to \infty} \mu_0(y_n) = \mu_0(y) = 0$.
Since
$$
\mu_0(y_n) = \sum_{k=1}^{m_n} \frac{1}{2^{|J_n'|}} = \nu'(x_n),
$$
we obtain $\lim_{n \to \infty} \nu'(x_n) = 0$. This is enough to show that $\nu'$ is countably additive on $\Sigma_0(\mathcal{S})$ by \cite[Proposition~1.3.3]{Bog}. By \cite[Proposition~1.3.10]{Bog}, there exists a unique countably additive extension $\nu: \Sigma_0 \to [0,1]$ of $\nu'$.

Now we define a measure $\mu: \mathcal{B}_\sigma(\mathcal{R}) \to [0,1]$ by setting $\mu(x) = \nu \bigl( \varphi(x) \bigr)$ for all $x \in \mathcal{B}_\sigma(\mathcal{R})$ and prove that $\mu$ has the desired properties. Observe that $\mu$ is well defined by the minimality of $\mathcal{R}$. For any $n \in \mathbb N$, distinct indices $i_1, \ldots, i_n \in I$ and signs $\theta_1, \ldots, \theta_n$ one has
$$
\mu \Bigl( \bigcap_{k=1}^n \theta_k r_{i_k} \Bigr) = \nu \left( \varphi \Bigl( \bigcap_{k=1}^n \theta_k r_{i_k} \Bigr) \right) = \nu \Bigl( \bigcap_{k=1}^n \theta_k \varphi(r_{i_k}) \Bigr) = \nu \Bigl( \bigcap_{k=1}^n \theta_k s_{i_k} \Bigr) = \frac1n
$$
by \eqref{eq:mainprime}, that is, \eqref{eq:main} holds. The countable additivity of $\nu$ on $\Sigma_0$ implies the countable additivity of $\mu$ on $\mathcal{B}_\sigma(\mathcal{R})$. Indeed, given any sequence $x_n \in \mathcal{B}_\sigma(\mathcal{R})$ with $x_{n+1} \leq x_n$ and $\inf_n x_n = \mathbf{0}$, one has that $\varphi(x_{n+1}) \subseteq \varphi(x_n)$ and $\inf_n \varphi(x_n) = \emptyset$ in $\Sigma_0$. Since $\Sigma_0$ is a $\sigma$-algebra, $\bigcap_{n=1}^\infty \varphi(x_n) \in \Sigma_0$, and so, $\bigcap_{n=1}^\infty \varphi(x_n) = \emptyset$ as a lower bound for the sequence $\bigl(\varphi(x_n)\bigr)_{n=1}^\infty$. By the countable additivity of $\nu$, $\lim_{n \to \infty} \nu \bigl(\varphi(x_n)\bigr) = 0$, that is, $\lim_{n \to \infty} \mu (x_n) = 0$. By \cite[Corollary~326G(a)]{F3}, $\mu$ is countably additive on $\mathcal{B}_\sigma(\mathcal{R})$.

It remains to show that $\mu$ is positive once $\mathcal{R}$ vanishes at infinity. Assume $z \in \mathcal{B}_\sigma(\mathcal{R})$ and $\mu(z) = 0$. Our goal is to show that $z = \mathbf{0}$. Choose a sequence $(z_n)_{n=1}^\infty$ in $\mathcal{B}(\mathcal{R})$ with $z_n \stackrel{\rm o}{\longrightarrow} z$. Let $(u_n)_{n=1}^\infty$ be a sequence in $\mathcal{B}_\sigma(\mathcal{R})$ such that $z_n \triangle z \leq u_n$ for all $n \in \mathbb N$ and $\bigcap_{n=1}^\infty u_n = \mathbf{0}$. Then
\begin{equation} \label{eq:zx.mh}
\forall n \in \mathbb N, \,\,\,\,\, z \leq z_n \cup (z \setminus z_n) \leq z_n \cup u_n.
\end{equation}
Say,
\begin{equation} \label{eq:zx.mh1}
z_n = \bigsqcup_{k=1}^{p_n} \bigcap_{j \in I_n} \vartheta_{n,k,j} r_j
\end{equation}
where $p_n \in \mathbb N$, $I_n$ are finite subsets of $I$ and $\vartheta_{n,k,j} = \pm 1$.
By \eqref{eq:zx.mh},
\begin{equation} \label{eq:zx.mh2}
\varphi(z) \subseteq \bigcap_{n=1}^\infty \bigl(\varphi(z_n) \cup \varphi(u_n)\bigr),
\end{equation}
where $\varphi$ is the isomorphism defined at the beginning of the proof, and by \eqref{eq:zx.mh1},
\begin{equation} \label{eq:zx.mh3}
\varphi(z_n) = \bigsqcup_{k=1}^{p_n} \bigcap_{j \in I_n} \vartheta_{n,k,j} \varphi(r_j)
\end{equation}

Since $u_{n+1} \leq u_n$ we have that $\varphi(z_{n+1}) \subseteq \varphi(z_n)$ for all $n \in \mathbb N$, and since $\bigcap_{n=1}^\infty u_n = \mathbf{0}$, we have $\bigcap_{n=1}^\infty \varphi(u_n) = \emptyset$. Hence for every $t \in \varphi(z)$ by \eqref{eq:zx.mh2}, there is $n_t \in \mathbb N$ such that $t \in \varphi(z_n)$ for all $n \geq n_t$, that is,
\begin{equation} \label{eq:zx.mh4}
\varphi(z) \subseteq \bigcup_{i=1}^\infty \bigcap_{n = i}^\infty \varphi(z_n).
\end{equation}

Given any $t \in \varphi(z_n)$, let $k_n(t)' \leq p_n$ be the least integer such that $t \in \bigcap_{j \in I_n} \vartheta_{n,k,j} \varphi(r_j)$ (such a number exists by \eqref{eq:zx.mh3}). Set $\Lambda' = \bigl\{ (\lambda_n)_{n=1}^\infty: \, \lambda_n \in \{1, \ldots, p_n\} \bigr\}$ and for every $i \in \mathbb N$ and $\lambda = (\lambda_n)_{n=1}^\infty \in \Lambda'$ set
$$
w_{i,\lambda} = \Bigl\{ t \in \bigcap_{n = i}^\infty \varphi(z_n): \,\,\, (\forall m \in \mathbb N) \, k_m'(t) = \lambda_m \Bigr\}.
$$

Observe that $|\Lambda'| = \aleph_0$ and for every $i \in \mathbb N$, $\bigcap_{n = i}^\infty \varphi(z_n) = \bigsqcup_{\lambda \in \Lambda'} w_{i,\lambda}$. On the other hand, for every $i \in \mathbb N$ and $\lambda = (\lambda_n)_{n=1}^\infty \in \Lambda'$
\begin{equation} \label{eq:zx.mh5}
w_{i,\lambda} = \bigcap_{n = i}^\infty \bigcap_{j \in I_n} \vartheta_{n,\lambda_n,j} \varphi(r_j) = \bigcap_{j \in M_{i,\lambda}} \varepsilon_{i,\lambda,j} \varphi(r_j) \subseteq \bigcap_{n = i}^\infty \varphi(z_n)
\end{equation}
for a suitable subset $M_{i,\lambda}$ of $I$ and signs $\varepsilon_{i,\lambda,j} = \pm 1$. We claim that $M_{i,\lambda}$ is infinite. Assume, on the contrary, that $|M_{i,\lambda}| = s$ for some $s \in \mathbb N$. Then, taking into account the last relation of \eqref{eq:zx.mh5} we deduce that
$$
\bigcap_{j \in M_{i,\lambda}} \varepsilon_{i,\lambda,j} r_j \leq \bigcap_{n = i}^\infty z_n
$$
and therefore
$$
2^{-s} = \mu \Bigl( \bigcap_{j \in M_{i,\lambda}} \varepsilon_{i,\lambda,j} r_j \Bigr) \leq \mu \Bigl( \bigcap_{n = i}^\infty z_n \Bigr) = 0,
$$
a contradiction. Thus, $M_{i,\lambda}$ is infinite. Since $\mathcal{R}$ vanishes at infinity, $\bigcap_{j \in M_{i,\lambda}} \varepsilon_{i,\lambda,j} r_j = \mathbf{0}$ and hence, $\bigcap_{j \in M_{i,\lambda}} \varepsilon_{i,\lambda,j} \varphi(r_j) = \emptyset$. By \eqref{eq:zx.mh5}, $w_{i,\lambda} = \emptyset$ for all $i \in \mathbb N$ and $\lambda = (\lambda_n)_{n=1}^\infty \in \Lambda'$. This yields that $\bigcap_{n = i}^\infty \varphi(z_n) = \emptyset$ for all $i \in \mathbb N$. By \eqref{eq:zx.mh4}, $\varphi(z) = \emptyset$ and hence $z = \mathbf{0}$.
\end{proof}

\begin{cor} \label{cor:mfhbbc}
Let $(r_i)_{i \in I}$ be a $\sigma$-complete Rademacher system in a Boolean algebra $\mathcal{B}$. Then there is a unique positive countably additive probability measure $\mu$ on $\mathcal{B}$ satisfying \eqref{eq:main}.
\end{cor}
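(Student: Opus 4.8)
The plan is to read this off as an essentially immediate consequence of Theorem~\ref{th:ccvmn740}, so the work is almost entirely bookkeeping with the definitions rather than new analysis. First I would unpack the hypothesis: by definition a \emph{Rademacher system} (family) is a vanishing at infinity minimal pre-Rademacher family, and by Definition~\ref{def015} the qualifier \emph{$\sigma$-complete} means precisely that $\mathcal{R}$ $\sigma$-generates $\mathcal{B}$, i.e.\ $\mathcal{B}_\sigma(\mathcal{R}) = \mathcal{B}$. Thus the hypothesis supplies exactly the ingredients needed to invoke the previous theorem on the whole algebra.

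Next I would apply Theorem~\ref{th:ccvmn740} directly. Since $\mathcal{R}$ is in particular a minimal pre-Rademacher family, that theorem produces a unique countably additive probability measure $\mu$ on $\mathcal{B}_\sigma(\mathcal{R})$ satisfying \eqref{eq:main}; and since $\mathcal{R}$ moreover vanishes at infinity (being a genuine Rademacher family), the same theorem guarantees that $\mu$ is positive, that is, $\mu(x) > 0$ for every $x \in \mathcal{B}_\sigma(\mathcal{R}) \setminus \{\mathbf{0}\}$. The only remaining substitution is to use $\sigma$-completeness to identify the domain: because $\mathcal{B}_\sigma(\mathcal{R}) = \mathcal{B}$, the measure $\mu$ is in fact defined on all of $\mathcal{B}$, is a countably additive probability measure there, is positive on $\mathcal{B} \setminus \{\mathbf{0}\}$, and satisfies \eqref{eq:main}. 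This establishes existence.

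For uniqueness I would argue that any positive countably additive probability measure $\mu'$ on $\mathcal{B}$ satisfying \eqref{eq:main} is in particular a countably additive probability measure on $\mathcal{B}_\sigma(\mathcal{R}) = \mathcal{B}$ satisfying \eqref{eq:main}, whence $\mu' = \mu$ by the uniqueness clause already contained in Theorem~\ref{th:ccvmn740}. No extra positivity argument is needed for uniqueness, since the measure of Theorem~\ref{th:ccvmn740} is automatically positive here.

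Because the entire content is contained in the earlier theorem together with the definition of $\sigma$-completeness, I do not expect any genuine obstacle. The only point deserving a word of care is making sure the two defined notions line up exactly, namely that ``$\sigma$-complete Rademacher system'' decomposes into the hypotheses ``minimal'', ``vanishing at infinity'', and ``$\mathcal{B}_\sigma(\mathcal{R}) = \mathcal{B}$'' that Theorem~\ref{th:ccvmn740} and Definition~\ref{def015} require; once this is checked, the corollary follows by substitution.
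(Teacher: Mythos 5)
Your proposal is correct and matches the paper's (implicit) argument exactly: the corollary is stated without proof precisely because it is the special case of Theorem~\ref{th:ccvmn740} in which $\mathcal{B}_\sigma(\mathcal{R}) = \mathcal{B}$, with positivity coming from the vanishing-at-infinity clause and uniqueness inherited a fortiori from the uniqueness clause of that theorem. No further comment is needed.
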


Like in Definition~\ref{def:dyadic}, the unique and existed measure will be called the dyadic measure generated by the Rademacher system.

\begin{rem}
Let $\mathcal{R} = (r_i)_{i \in I}$ be a Rademacher family in a Boolean algebra $\mathcal{B}$. By Theorem~\ref{th:ccvmn740}, $\mathcal{B}_\sigma(\mathcal{R})$ is a measurable Boolean algebra, hence, $\mathcal{B}_\sigma(\mathcal{R})$ possesses the CCC and thus, $\mathcal{B}_\sigma(\mathcal{R}) = \mathcal{B}_\tau(\mathcal{R})$. So, the words ``$\sigma$-complete Rademacher family'' we replace with ``complete Rademacher family''.
\end{rem}

\begin{defn} \label{def:dyadic}
Let $\mathcal{R} = (r_i)_{i \in I}$ be a complete Rademacher family in a Boolean algebra $\mathcal{B}$. The countably additive positive probability measure $\mu: \mathcal{B} \to [0,1]$ satisfying \eqref{eq:main} is called the \emph{dyadic measure} on $\mathcal{B}$ generated by $\mathcal{R}$.
\end{defn}

\section{A characterization of homogeneous measurable algebras} \label{sec:charhma}

\begin{thm} \label{th:djjfh75mach}
Dedekind $\sigma$-complete Boolean algebras admitting complete Rademacher systems of the same cardinality are isomorphic.
\end{thm}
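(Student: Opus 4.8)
The plan is to build the isomorphism in two stages: first match the subalgebras generated by the two families by a combinatorial, measure-preserving bijection, and then extend it by metric completeness. Write the two algebras as $\mathcal{B}'$ and $\mathcal{B}''$, with complete Rademacher families $\mathcal{R}'=(r_i')_{i\in I}$ and $\mathcal{R}''=(r_i'')_{i\in I}$. Since the families have the same cardinality, after relabelling via a bijection of the index sets we may assume they share a common index set $I$. By Corollary~\ref{cor:mfhbbc} each family induces a unique positive countably additive probability measure, $\mu'$ on $\mathcal{B}'$ and $\mu''$ on $\mathcal{B}''$, satisfying \eqref{eq:main}. Thus $(\mathcal{B}',\mu')$ and $(\mathcal{B}'',\mu'')$ are measure algebras, and in particular each is complete as a metric space under $\rho(x,y)=\mu(x\triangle y)$ (see \cite{F3}).

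First I would analyse the generated subalgebras. By Lemma~\ref{le:tech1} every element of $\mathcal{B}'(\mathcal{R}')$ admits, over a common finite index set $J$, an expansion as a disjoint union of particles; by \eqref{eq:beg} the particles over a fixed $J$ partition $\mathbf{1}$ into $2^{|J|}$ nonzero pieces, so such an expansion is unique and the map
\[
\varphi\Bigl(\bigsqcup_{k=1}^m\bigcap_{j\in J}\theta_{k,j}r_j'\Bigr)=\bigsqcup_{k=1}^m\bigcap_{j\in J}\theta_{k,j}r_j''
\]
of Lemma~\ref{le:tech2} is a well-defined bijection $\mathcal{B}'(\mathcal{R}')\to\mathcal{B}''(\mathcal{R}'')$. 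By the equivalence of the inclusions \eqref{eq:adjn1}--\eqref{eq:adjn2} recorded in Lemma~\ref{le:tech2}, both $\varphi$ and its inverse are order preserving, so $\varphi$ is an order isomorphism and hence a Boolean isomorphism of the generated subalgebras. Moreover $\varphi$ carries a particle $\bigcap_{j\in J}\theta_j r_j'$ of measure $2^{-|J|}$ to a particle of the same order $|J|$, so by \eqref{eq:main} it is measure preserving, $\mu''(\varphi(x))=\mu'(x)$, and therefore $\rho''(\varphi(x),\varphi(y))=\rho'(x,y)$.

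Next I would pass to the $\sigma$-closure. Because the families are complete, $\mathcal{B}'_\sigma(\mathcal{R}')=\mathcal{B}'$ and $\mathcal{B}''_\sigma(\mathcal{R}'')=\mathcal{B}''$, and every element of $\mathcal{B}'$ is an order limit of a sequence from $\mathcal{B}'(\mathcal{R}')$, as recorded in the preliminaries. For such a sequence $x_n\triangle x\le u_n\downarrow\mathbf{0}$, whence $\mu'(x_n\triangle x)\le\mu'(u_n)\to0$ by countable additivity; thus order convergence implies $\rho'$-convergence, so $\mathcal{B}'(\mathcal{R}')$ is $\rho'$-dense in $\mathcal{B}'$, and symmetrically on the double-primed side. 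The isometry $\varphi$ between $\rho'$-dense and $\rho''$-dense subalgebras of complete metric spaces then extends uniquely to an isometry $\overline{\varphi}\colon\mathcal{B}'\to\mathcal{B}''$. Since $\cup$, $\cap$, $\triangle$ and complementation are $\rho$-Lipschitz, hence $\rho$-continuous, and $\varphi$ is a homomorphism on a dense set, $\overline{\varphi}$ is a Boolean homomorphism; it is injective as an isometry and surjective because its range is the $\rho''$-closure of $\mathcal{B}''(\mathcal{R}'')$, i.e.\ all of $\mathcal{B}''$. Hence $\overline{\varphi}$ is the desired isomorphism.

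The main obstacle I expect is not the combinatorial isomorphism of the generated subalgebras, which is bookkeeping with particles through Lemmas~\ref{le:tech1} and~\ref{le:tech2}, but justifying the metric extension: one must invoke that a positive countably additive finite measure on a Dedekind $\sigma$-complete Boolean algebra makes it metrically complete, and that the $\sigma$-generated subalgebra is metrically dense. The first is the standard completeness of a measure algebra; the second reduces, as above, to the elementary observation that order convergence dominates convergence in the measure metric.
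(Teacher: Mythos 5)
Your proposal is correct, and its first stage is the same as the paper's: the particle\mbox{-}to\mbox{-}particle map $\varphi$ on $\mathcal{B}'(\mathcal{R}')$, well defined and order preserving in both directions via Lemmas~\ref{le:tech1} and~\ref{le:tech2}, and measure preserving by \eqref{eq:main}. Where you genuinely diverge is the extension step. The paper stays inside the order structure: for $x'\in\mathcal{B}'$ it takes $x_n'\stackrel{\rm o}{\longrightarrow}x'$ with $x_n'\in\mathcal{B}'(\mathcal{R}')$, forms $\underline{x}''=\liminf_n\varphi(x_n')$ and $\overline{x}''=\limsup_n\varphi(x_n')$, and uses the two dyadic measures plus positivity of $\mu''$ to force $\underline{x}''=\overline{x}''$, after which well\mbox{-}definedness, bijectivity and order preservation are checked by hand. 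You instead note that $\varphi$ is an isometry for the metrics $\rho(x,y)=\mu(x\triangle y)$ between subalgebras that are dense in the complete metric spaces $(\mathcal{B}',\rho')$ and $(\mathcal{B}'',\rho'')$, and extend by the standard dense\mbox{-}isometry argument, with injectivity, surjectivity and the homomorphism property falling out of uniform continuity of the Boolean operations. What your route buys: these verifications come for free, and you sidestep a delicate point in the paper's computation, where the asserted equalities $\mu''(\underline{x}'')=\liminf_n\mu''(x_n'')$ and $\limsup_n\mu''(x_n'')=\mu''(\overline{x}'')$ are in general only Fatou\mbox{-}type inequalities pointing the wrong way for concluding $\mu''(\overline{x}''\setminus\underline{x}'')=0$; the honest repair is a Cauchy\mbox{-}in\mbox{-}measure argument, which is exactly what the completeness of a totally finite measure algebra packages for you. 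What it costs: you must import that completeness from \cite[323G]{F3}, and you rely (as the paper also does, via its preliminaries) on the fact that each element of $\mathcal{B}_\sigma(\mathcal{R}')=\mathcal{B}'$ is the order limit, hence the $\rho'$\mbox{-}limit, of a single sequence from $\mathcal{B}'(\mathcal{R}')$, so that $\mathcal{B}'(\mathcal{R}')$ is indeed metrically dense. Both points are legitimate in the present setting, so your argument stands.
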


Before the proof, we recall some more information on the order convergence. Limit inferior and limit superior of an order bounded sequence $(x_n)_{n=1}$ in a Dedekind $\sigma$-complete lattice $E$ are defined by $\liminf_n x_n = \sup_n \inf_{k \geq n} x_k$ and $\limsup_n x_n = \inf_n \sup_{k \geq n} x_k$ respectively. Anyway, $\liminf_n x_n \leq \limsup_n x_n$. If $x = \liminf_n x_n = \limsup_n x_n$ then it is said the sequence $(x_n)_{n=1}$ order converges to $x$ and written $x_n \overset{\rm o}\longrightarrow x$. This new notion of the order convergence in a Boolean algebra coincides with the one given in the introduction.

\begin{proof}
Let $\mathcal{B}'$ and $\mathcal{B}''$ be Dedekind $\sigma$-complete Boolean algebras with complete Rademacher systems $\mathcal{R}' = (r_i')_{i \in I}$ and $\mathcal{R}'' = (r_i'')_{i \in I}$ respectively. We construct an isomorphism $\varphi: \mathcal{B}' \to \mathcal{B}''$ as follows. First we define $\varphi$ on $\mathcal{B}'(\mathcal{R}')$ by setting
$$
\varphi \Bigl( \bigsqcup_{k=1}^m \bigcap_{j \in J} \theta_{k,j} r_j' \Bigr) = \bigsqcup_{k=1}^m \bigcap_{j \in J} \theta_{k,j} r_j''
$$
for every $m \in \mathbb N$, every finite subset $J$ of $I$ and each signs $\theta_{n,k,j} = \pm 1$.

Take any $x' \in \mathcal{B}'$. Choose a sequence $(x_n')_{n=1}^\infty$ in $\mathcal{B}'(\mathcal{R}')$ with $x_n' \overset{\rm o}\longrightarrow x'$. Then $x' = \liminf_n x_n' = \limsup_n x_n'$. Say,
$$
x_n' = \bigsqcup_{k=1}^{m_n} \bigcap_{j \in J_n} \theta_{n,k,j} r_j',
$$
where $m_n \in \mathbb N$, $J_n$ are finite subsets of $I$ and $\theta_{n,k,j} = \pm 1$ (see Lemma~\ref{le:tech1}). Our goal is to prove the existence of the order limit of the sequence
$$
x_n'' = \varphi(x_n') = \bigsqcup_{k=1}^{m_n} \bigcap_{j \in J_n} \theta_{n,k,j} r_j''
$$
in $\mathcal{B}''$. Let $\mu'$ and $\mu''$ be the dyadic measures on $\mathcal{B}'$ and $\mathcal{B}''$ generated by $\mathcal{R}'$ and $\mathcal{R}''$ respectively. By the definitions, $\mu'(x_n') = \mu''(x_n'')$ for all $n \in \mathbb N$. Set $\underline{x}'' = \liminf_n x_n''$ and $\overline{x}'' = \limsup_n x_n''$. Then, using the countable additivity of $\mu'$ and $\mu''$ we obtain
\begin{align*}
\mu''(\underline{x}'') &= \liminf_n \mu''(x_n'') = \liminf_n \mu'(x_n') = \mu'(x')\\
&= \limsup_n \mu'(x_n') = \limsup_n \mu''(x_n'') = \mu''(\overline{x}'').
\end{align*}

Taking into account that $\underline{x}'' \leq \overline{x}''$, one has
$$
\mu''(\overline{x}'' \setminus \underline{x}'') = \mu''(\overline{x}'') - \mu''(\underline{x}'') = 0.
$$
By positivity of $\mu''$, $\underline{x}'' = \overline{x}''$ which yields the existence of the order limit $x'' = \underline{x}'' = \overline{x}''$ of the sequence $(x_n'')_{n=1}^\infty$. We set $\varphi(x') = x''$. The proof of the independence of $x''$ on the choice of a sequence $(x_n')_{n=1}^\infty$ and that $\varphi$ is a bijection is standard. By the above, $\varphi: \mathcal{B}' \to \mathcal{B}''$ is an order continuous and measure preserving map, that is, $\mu'' \bigl( \varphi(x) \bigr) = \mu'(x)$ for all $x \in \mathcal{B}'$.

It remains to show that $\varphi$ is order preserving. Assume $x,y \in \mathcal{B}'$ with $x \leq y$. Choose any sequences $(x_n)_{n=1}^\infty$ and $(y_n)_{n=1}^\infty$ in $\mathcal{B}'(\mathcal{R}')$ with $x_n \overset{\rm o}\longrightarrow x$ and $y_n \overset{\rm o}\longrightarrow y$. Set $x_n' = x_n \cap y_n$ and $y_n' = x_n \cup y_n$ for all $n \in \mathbb N$. Then $x_n', y_n' \in \mathcal{B}'(\mathcal{R}')$, $x_n' \overset{\rm o}\longrightarrow x \cap y = x$ and $y_n' \overset{\rm o}\longrightarrow x \cup y = y$. Since $x_n' \leq y_n'$, by Lemma~\ref{le:tech2}, $\varphi(x_n') \leq \varphi(y_n')$ for all $n$. By the order continuity of $\varphi$, $\varphi(x) \leq \varphi(y)$.
\end{proof}

As a consequence of the Maharam theorem~\ref{th:Mah1} and Theorem~\ref{th:djjfh75mach} we obtain the following new characterization of homogeneous measurable algebras.

\begin{thm} \label{th:mach}
A Dedekind $\sigma$-complete Boolean algebra $\mathcal{B}$ is homogeneous measurable if and only if there is a complete Rademacher family in $\mathcal{B}$.
\end{thm}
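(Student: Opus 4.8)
The plan is to prove Theorem~\ref{th:mach} by establishing the two implications separately, using Theorem~\ref{th:djjfh75mach} and the First Maharam Theorem~\ref{th:Mah1} as the principal tools.

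For the \emph{necessity} direction, suppose $\mathcal{B}$ is homogeneous measurable with density $\aleph_\alpha = {\rm dens}\,\mathcal{B}$. By the First Maharam Theorem, $\mathcal{B}$ is isomorphic to $\widehat{\Sigma}_{\omega_\alpha}$. The standard Rademacher family $(\overline{r}_\gamma)_{\gamma < \omega_\alpha}$ in $\widehat{\Sigma}_{\omega_\alpha}$ is, by item~(1) of Remark~\ref{rem:ddk}, an atomless complete pre-Rademacher family; since a complete pre-Rademacher family is in particular minimal and vanishes at infinity (these properties being exactly (R1)--(R4) witnessed by the standard family), it is a complete Rademacher family. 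Transporting it through the Maharam isomorphism and invoking item~(2) of Remark~\ref{rem:ddk}, which guarantees that all the relevant properties are preserved under isomorphism, yields a complete Rademacher family in $\mathcal{B}$. This direction is essentially a bookkeeping argument assembling facts already in place.

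For the \emph{sufficiency} direction, suppose $\mathcal{B}$ is Dedekind $\sigma$-complete and admits a complete Rademacher family $\mathcal{R}$ of some cardinality $\kappa$, which I may take to be $\aleph_\alpha = |I|$. By Corollary~\ref{cor:mfhbbc} (equivalently Theorem~\ref{th:ccvmn740}), $\mathcal{R}$ generates a unique positive countably additive probability measure $\mu$ on $\mathcal{B} = \mathcal{B}_\sigma(\mathcal{R})$, so $\mathcal{B}$ is a measurable Boolean algebra. It remains to verify homogeneity, i.e.\ that ${\rm dens}\,e = {\rm dens}\,\mathcal{B}$ for every nonzero $e$. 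The cleanest route is to apply Theorem~\ref{th:djjfh75mach}: the standard Rademacher family $(\overline{r}_\gamma)_{\gamma < \omega_\alpha}$ furnishes $\widehat{\Sigma}_{\omega_\alpha}$ with a complete Rademacher family of the same cardinality $\aleph_\alpha$, and both $\mathcal{B}$ and $\widehat{\Sigma}_{\omega_\alpha}$ are Dedekind $\sigma$-complete, so Theorem~\ref{th:djjfh75mach} gives an isomorphism $\mathcal{B} \cong \widehat{\Sigma}_{\omega_\alpha}$. Since $\widehat{\Sigma}_{\omega_\alpha}$ is homogeneous measurable and homogeneity and measurability are isomorphism-invariant, $\mathcal{B}$ is homogeneous measurable as well.

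\textbf{The main obstacle} I anticipate is the matching of cardinalities so that Theorem~\ref{th:djjfh75mach} can legitimately be invoked: that theorem requires complete Rademacher systems \emph{of the same cardinality}, so I must be sure that the cardinality $\kappa$ of the abstractly given family $\mathcal{R}$ equals the Maharam dimension $\aleph_\alpha$ of the target algebra $\widehat{\Sigma}_{\omega_\alpha}$. The natural choice is to take $\widehat{\Sigma}_{\omega_\kappa}$ with $\omega_\kappa$ of cardinality $\kappa = |I|$, so that its standard Rademacher family has exactly the same index cardinality as $\mathcal{R}$; then the isomorphism is immediate and no separate density computation is needed. The one point deserving care is confirming that ${\rm dens}\,\widehat{\Sigma}_{\omega_\kappa} = \kappa$, i.e.\ that the standard family is a $\tau$-generating set of minimal cardinality; but since the standard family $\tau$-generates and the algebra cannot be generated by fewer than $\kappa$ elements (a smaller generating set would force, via Maharam, a strictly smaller dimension), this identification of the density with $\kappa$ closes the argument.
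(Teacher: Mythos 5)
Your proposal is correct and follows essentially the same route as the paper, which derives Theorem~\ref{th:mach} in one line from the First Maharam Theorem~\ref{th:Mah1} together with Theorem~\ref{th:djjfh75mach}; your write-up simply makes the two directions and the cardinality bookkeeping explicit. One phrasing caveat: a complete pre-Rademacher family is \emph{not} in general minimal and vanishing at infinity (Examples~\ref{ex:ffk2} and~\ref{ex:ffk3} show otherwise), so in the necessity direction you should say only that the \emph{standard} family $(\overline{r}_\gamma)_{\gamma<\omega_\alpha}$ itself satisfies all of (R1)--(R4), which is what you evidently intend.
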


\section{Rademacher and Haar systems in Riesz spaces} \label{secfive}

$\,\,\,\,\,\,$\textbf{Necessary information on Riesz spaces.} \label{subsfragm} Recall some well known definitions. Let $E$ be a Riesz space. Two elements $x,y \in E$ are called \textit{disjoint} (or orthogonal) if $|x| \wedge |y| = 0$ and this fact is written as $x \bot \, y$. Two subsets $A,B \subseteq E$ are \textit{disjoint} if $x \bot \, y$ for all $x \in A$ and $y \in B$. For any subset $A \subseteq E$ by $A^d$ we denote the set $A^d =  \{x \in E: \,\, A \,\, \mbox{and} \,\, \{x\} \,\, \mbox{are disjoint}  \}$. The notation $x = \bigsqcup_{k=1}^n x_k$ means that $x = \sum_{k=1}^n x_k$ and $x_i \bot \, x_j$ if $i \neq j$.

An element $x$ of a Riesz space $E$ is called a \textit{fragment} (in other terminology, a component) of an element $y \in E$, provided $x \bot (y-x)$. The notation $x \sqsubseteq y$ means that $x$ is a fragment of $y$. Given any $e \in E^+$, the set $\mathcal C_e^X$ of all fragments of $e$ is a Boolean algebra with respect to the lattice operations $\vee$ and $\wedge$, consisting precisely of all extreme points of the order segment $[0,e]$ \cite[p.~37]{ABu}.

A \textit{sublattice} of a Riesz space $E$ is a vector subspace $F$ of $E$ which is a lattice itself with respect to the same ordering. A subset $A$ of a Riesz space $E$ is called \textit{solid} if for any $x \in A$ and $y \in E$ the condition $|y| \leq |x|$ implies that $y \in A$. A solid vector subspace is called an \textit{ideal}. An order closed ideal is called  a \textit{band}. A band $I$ of a Riesz space $E$ is called a \textit{projection band} if $E = I \oplus I^d$.

By $A_e$ and $B_e$ we denote the ideal and the band in $E$ generated by an element $e \in E$.

Let $F$ be an ideal of a Banach lattice $E$. An element $e \in F^+$ is called a \textit{weak unit} of $F$ provided for every $x \in F$ the condition $x \bot e = 0$ implies that $x = 0$.

Recall that an element $e$ of a Riesz space $E$ is called a \textit{projection element} if $B_e$ is a projection band. A Riesz space $E$ is said to have the \textit{principal projection property} if every element of $E$ is a projection element.

Let $e \geq 0$ be a projection of an element of $E$. Then for every $x \in X$ by $P_e (x)$ we denote the projection of $x$ to $B_e$. By \cite[Theorem~3.13]{ABu}, $P_e (x) = \bigvee_{n=1}^\infty (x \wedge ne)$ for every $x \in E^+$. One can show that $P_e (x) \sqsubseteq x$ for all $x$ and $e$.

A Riesz space $E$ is called \emph{Dedekind complete} (\emph{Dedekind} $\sigma$-\emph{complete}) if every order bounded from above nonempty subset of (sequence in) $E$ has the least upper bound in $E$. Every Dedekind $\sigma$-complete Riesz space has the principal projection property \cite[p.~36]{ABu}.

A decreasing (in the non-strict sense) net $(x_\alpha)$ in a Riesz space $E$ is said to be \textit{order converging} to an element $x \in E$ (notation $x_\alpha \downarrow x$) if $\inf_{\alpha} x_\alpha = x$. Likewise, the notation $x_\alpha \uparrow x$ means that $(x_\alpha)$ is an increasing net with $\sup_{\alpha} x_\alpha = x$. More generally, it is said that a net $(x_\alpha)_{\alpha \in \Lambda}$ in $E$ \textit{order converges} to an element $x \in E$ (notation $x_\alpha \stackrel{\rm o}{\longrightarrow} x$ if there exists a net $(u_\alpha)_{\alpha \in \Lambda}$ in $E$ with the same index set such that $u_\alpha \downarrow 0$ and $|x_\beta - x| \leq u_\beta$ for all  $\beta\in \Lambda$.

Let $E$ be a Riesz space, $0 < e \in E$. Any element $x \in E$ of the form $x = \bigsqcup_{k=1}^m a_k x_k$, where $m \in \mathbb N$, $a_k \in \mathbb R$ and $x_k \in \mathfrak{F}_e$ is called an $e$-step function. The following well known theorem is very essential for the sequel, see
\cite[Theorem~6.8]{ABu}.

\begin{thm}[Freudenthal's Spectral Theorem] \label{th:Frsssth}
Let $E$ be a Riesz space with the principal projection property and $0 < e \in E$. Then for every $x \in A_e$ there exists a sequence $(u_n)_{n=1}^\infty$ of $e$-step functions such that
$$
\forall n, \,\, 0 \leq x - u_n \leq \frac1n \, e \,\,\,\,\, \mbox{and} \,\,\,\,\, u_n \uparrow x.
$$
\end{thm}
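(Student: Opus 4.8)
The plan is to realize $u_n$ as "lower Riemann sums'' built from a \emph{spectral family} of fragments of $e$, obtained by band projections. Since $x \in A_e$, I first fix $\lambda > 0$ with $|x| \leq \lambda e$; the principal projection property makes every element a projection element, so $P_w$ (the band projection onto $B_w$) is defined for all $w \in E$, and I set, for each real $\alpha$,
\[
p_\alpha := P_{(x - \alpha e)^+}(e),
\]
which is a fragment of $e$ (the projection of $e$ onto any band is a fragment of $e$, as recalled before the theorem). As $\alpha$ grows, $(x-\alpha e)^+$ decreases, so $B_{(x-\alpha e)^+}$ shrinks and $\alpha \mapsto p_\alpha$ is decreasing; moreover $p_\alpha = e$ for $\alpha < -\lambda$ (there $x - \alpha e \geq c\,e > 0$) and $p_\alpha = \mathbf{0}$ for $\alpha \geq \lambda$ (there $(x-\alpha e)^+ = 0$). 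Thus the whole ``spectrum'' of $x$ sits inside $[-\lambda,\lambda]$.

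The heart of the argument is a two-sided estimate. For $\alpha \leq \beta$ put $f := p_\alpha - p_\beta \geq 0$, a fragment of $e$ that is disjoint from $p_\beta$; I would prove
\[
\alpha\, f \;\leq\; P_f(x) \;\leq\; \beta\, f .
\]
This rests on the identity $P_{w^+}(w) = w^+$ (valid since $w^+ \perp w^-$), applied to $w = x - \alpha e$: writing $\Pi_\alpha = P_{(x-\alpha e)^+}$ gives $\Pi_\alpha(x) = \alpha p_\alpha + (x-\alpha e)^+$. Since $f \leq p_\alpha \in B_{(x-\alpha e)^+}$, the projection $P_f$ factors through $\Pi_\alpha$, and using $P_f(p_\alpha) = f$ one gets $P_f(x) = \alpha f + P_f\bigl((x-\alpha e)^+\bigr) \geq \alpha f$. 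For the upper bound, $f \leq e - p_\beta$ lies in the complementary band, on which $(I - \Pi_\beta)(x) \leq \beta(e - p_\beta)$; applying $P_f$ yields $P_f(x) \leq \beta f$.

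With this in hand the assembly is routine. Fix $n$ and choose $\alpha_0 < \alpha_1 < \cdots < \alpha_N$ with $\alpha_0 < -\lambda$, $\alpha_N \geq \lambda$ and $\alpha_k - \alpha_{k-1} \leq \frac1n$; set $f_k = p_{\alpha_{k-1}} - p_{\alpha_k}$ and $u_n = \bigsqcup_{k=1}^N \alpha_{k-1} f_k$, an $e$-step function. Because $\sum_{k} f_k = p_{\alpha_0} - p_{\alpha_N} = e$ and $x \in B_e$, we have $x = \sum_k P_{f_k}(x)$, so
\[
x - u_n = \sum_{k=1}^N \bigl( P_{f_k}(x) - \alpha_{k-1} f_k \bigr),
\]
and the key estimate gives $0 \leq P_{f_k}(x) - \alpha_{k-1} f_k \leq (\alpha_k - \alpha_{k-1}) f_k \leq \frac1n f_k$; summing yields $0 \leq x - u_n \leq \frac1n e$. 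Taking the partitions nested (each refining the previous) forces the left-endpoint sums to increase, since subdividing $[\alpha_{k-1},\alpha_k]$ at $\gamma$ replaces $\alpha_{k-1}(p_\gamma - p_{\alpha_k})$ by the larger $\gamma(p_\gamma - p_{\alpha_k})$; hence $u_n \uparrow$. As $E$ is Archimedean (forced by the principal projection property) one has $\frac1n e \downarrow 0$, so any upper bound $z$ of $(u_n)$ satisfies $x - z \leq \frac1n e$ for every $n$, whence $z \geq x$ and $\sup_n u_n = x$.

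The main obstacle is the spectral estimate $\alpha f \leq P_f(x) \leq \beta f$: everything else is bookkeeping with a Riemann-type partition. The delicate points are the band-projection algebra, namely the identity $P_{w^+}(w) = w^+$ and the factorizations $P_f = P_f \Pi_\alpha$ (when $f \leq p_\alpha$) and $P_f = P_f (I - \Pi_\beta)$ (when $f \leq e - p_\beta$), together with the Archimedean step needed to upgrade $0 \leq x - u_n \leq \frac1n e$ to genuine order convergence $u_n \uparrow x$.
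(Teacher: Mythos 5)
Your argument is correct: it is the classical spectral-family proof of Freudenthal's theorem (the two-sided estimate $\alpha f \leq P_f(x) \leq \beta f$ for $f = p_\alpha - p_\beta$, followed by Riemann-type left-endpoint sums over nested partitions), which is essentially the proof in the source the paper cites, \cite[Theorem~6.8]{ABu} — the paper itself states the theorem as known and gives no proof. All the delicate points you flag (the identity $P_{w^+}(w)=w^+$, the factorizations $P_f=P_f\Pi_\alpha$ and $P_f=P_f(I-\Pi_\beta)$, and the Archimedean step, which is available since the principal projection property implies the Archimedean property and the paper in any case restricts to Archimedean Riesz spaces) check out.
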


In what follows, we consider Archimedean Riesz spaces only (see \cite{ABu} for more details).

$\,\,\,\,\,\,$\textbf{Signed fragments, pre-Rademacher and Rademacher systems.} \begin{defn}
Let $E$ be a Riesz space and $0 \neq e \in E$. An element $x \in E$ is called a \emph{signed fragment} of $e$ provided that $|x| \sqsubseteq |e|$.
\end{defn}

By $|\mathfrak{F}|_e$ we denote the set of all signed fragments of $e$. It is clear from the definition that $|\mathfrak{F}|_e = |\mathfrak{F}|_{|e|}$, so we assume $e > 0$ in what follows. For convenience of notation, we introduce the multiplication operation on $|\mathfrak{F}|_e$ to be the following disjoint sum:
$$
x \cdot y = x^+ \wedge y^+ + x^- \wedge y^- - x^+ \wedge y^- - x^- \wedge y^+
$$
for all $x,y \in |\mathfrak{F}|_e$. To prove the following proposition is a standard technical exercise.

\begin{prop} \label{pr:sysysysysd}
Let $E$ be a Riesz space and $0 < e \in E$.
\begin{enumerate}
  \item The binary operation $\cdot$ is well defined on $|\mathfrak{F}|_e$, that is, $x \cdot y \in |\mathfrak{F}|_e$ for all $x,y \in |\mathfrak{F}|_e$.
  \item $|\mathfrak{F}|_e$ is an Abelian group relative to the operation $\cdot$, unit $e$, and the inverse element $x^{-1} = x$ for all $x \in |\mathfrak{F}|_e$.
\end{enumerate}
\end{prop}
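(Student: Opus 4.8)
The plan is to reduce everything to the Boolean algebra $\mathfrak{F}_e$ of fragments of $e$ together with the informal picture of a signed fragment as a $\{-1,0,1\}$-valued ``function'' on $e$, under which $x\cdot y$ is just the pointwise product. The first thing I would record is that for every $x\in|\mathfrak{F}|_e$ both $x^+$ and $x^-$ are fragments of $e$. Indeed, from $|x|\wedge(e-|x|)=\mathbf 0$ and $x^+\leq|x|$ one gets $x^+\perp(e-|x|)$; since also $x^+\perp x^-$ and $e-x^+=x^-+(e-|x|)$ is a sum of two elements each orthogonal to $x^+$, the elementary Riesz-space inequality $a\wedge(b+c)\leq a\wedge b+a\wedge c$ (valid for $a,b,c\geq0$) yields $x^+\wedge(e-x^+)=\mathbf 0$, i.e.\ $x^+\sqsubseteq e$; likewise $x^-\sqsubseteq e$. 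Thus $x\mapsto(x^+,x^-)$ identifies $|\mathfrak{F}|_e$ with the ordered pairs of disjoint fragments of $e$, and the modulus $|x|=x^+\sqcup x^-$ is itself a fragment of $e$.

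For item (1) I would set $p=x^+\wedge y^+$, $q=x^-\wedge y^-$, $r=x^+\wedge y^-$, $s=x^-\wedge y^+$, so that $x\cdot y=p+q-r-s$. Each is a meet of two fragments of $e$, hence a fragment of $e$. Comparing them pairwise and using $x^+\perp x^-$ and $y^+\perp y^-$ shows $p,q,r,s$ are pairwise disjoint (for instance $p\leq x^+$, $s\leq x^-$ give $p\perp s$, while $p\leq y^+$, $r\leq y^-$ give $p\perp r$, and so on). Consequently $(p+q)\perp(r+s)$, whence $(x\cdot y)^+=p\sqcup q$, $(x\cdot y)^-=r\sqcup s$, and $|x\cdot y|=p\sqcup q\sqcup r\sqcup s$ is a finite join of fragments of $e$, so again a fragment of $e$. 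Therefore $|x\cdot y|\sqsubseteq e$ and $x\cdot y\in|\mathfrak{F}|_e$.

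For item (2) the easy axioms come first. Commutativity is immediate from commutativity of $\wedge$ and the symmetry of the defining expression, since swapping $x$ and $y$ merely interchanges $r$ and $s$. Because $e^+=e$, $e^-=\mathbf 0$ and $x^\pm\leq e$, one computes $x\cdot e=x^+\wedge e-x^-\wedge e=x^+-x^-=x$, so $e$ is a neutral element. For the inverse I would compute $x\cdot x=x^+\wedge x^++x^-\wedge x^--x^+\wedge x^--x^-\wedge x^+=x^++x^-=|x|$; this equals the neutral element $e$ exactly for the signed fragments with full modulus $|x|=e$ (equivalently, those coming from two-point partitions $e=x^+\sqcup x^-$, which is the case relevant to the Rademacher theory), and for these $x^{-1}=x$.

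The only genuinely computational step, which I expect to be the main obstacle, is associativity. Here I would use that every Riesz space is a distributive lattice, so that inside $\mathfrak{F}_e$ the disjoint sums above coincide with joins and $\wedge$ distributes over $\vee$. Writing $u=x\cdot y$ and expanding $(u\cdot z)^+=(u^+\wedge z^+)\vee(u^-\wedge z^-)$ by distributivity gives
\[
(x^+\wedge y^+\wedge z^+)\vee(x^-\wedge y^-\wedge z^+)\vee(x^+\wedge y^-\wedge z^-)\vee(x^-\wedge y^+\wedge z^-),
\]
namely the join of the four triple-meets carrying an even number of minus signs, with the analogous odd-sign join for $(u\cdot z)^-$. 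Both expressions are manifestly invariant under permuting $x,y,z$, so applying the same computation to $x\cdot(y\cdot z)=(y\cdot z)\cdot x$ returns identical positive and negative parts; hence $(x\cdot y)\cdot z=x\cdot(y\cdot z)$. The points requiring care are only the repeated use of distributivity and the identification of the formal signs $+,-$ with joins of pairwise disjoint fragments, both legitimate in the distributive lattice $\mathfrak{F}_e$.
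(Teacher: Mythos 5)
The paper does not actually prove this proposition --- it dismisses it with ``To prove the following proposition is a standard technical exercise'' --- so there is no argument of the author's to compare yours against. Your write-up is a correct and complete execution of that exercise: the preliminary observation that $x^{+},x^{-}\in\mathfrak{F}_e$ whenever $|x|\sqsubseteq e$ (via $e-x^{+}=x^{-}+(e-|x|)$ and $a\wedge(b+c)\leq a\wedge b+a\wedge c$) is exactly the right reduction to pairs of disjoint fragments; the four meets $p,q,r,s$ are indeed pairwise disjoint fragments, which gives closure and identifies $(x\cdot y)^{\pm}$; and the associativity argument via distributivity of $\mathfrak{F}_e$ and the even/odd sign decomposition of the triple meets is clean and correct.

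One point deserves emphasis rather than the parenthetical treatment you give it: your computation $x\cdot x=|x|$ shows that item (2) of the proposition is \emph{false as literally stated}. The definition of $|\mathfrak{F}|_e$ requires only $|x|\sqsubseteq e$, so $\mathbf{0}\in|\mathfrak{F}|_e$, and more generally any $x$ with $|x|<e$ satisfies $|x\cdot y|\leq|x|<e$ for every $y$, hence has no inverse. Under the operation $\cdot$ the set $|\mathfrak{F}|_e$ is only a commutative monoid with identity $e$; the Abelian group (with $x^{-1}=x$, so an elementary abelian $2$-group) is the subset $\{x\in|\mathfrak{F}|_e:\,|x|=e\}$ of signed fragments arising from two-point partitions $e=x^{+}\sqcup x^{-}$. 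Since the Rademacher elements $r_i=2\widehat{r}_i-e$ all satisfy $|r_i|=e$, this is the subset the paper actually uses, but the statement of the proposition should be corrected accordingly. Your proof, restricted to that subset, establishes everything claimed.
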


\begin{defn}
Let $E$ be a Riesz space and $0 < e \in E$. Let $\mathfrak{F}_e$ denote the Boolean algebra of fragments of $e$ and let $\widehat{\mathcal{R}} = (\widehat{r}_i)_{i \in I}$ be a pre-Rademacher family in $\mathfrak{F}_e$. The following system of elements of $|\mathfrak{F}|_e$
\begin{equation} \label{eq:defrads}
\mathcal{R} = (r_i)_{i \in I}, \,\,\,\,\, \mbox{wrere} \,\,\, r_i = 2 \widehat{r}_i - e, \,\, i \in I
\end{equation}
is called a \emph{pre-Rademacher system} in $E$ (more precisely, a \emph{pre-Rademacher system on} $e$).
\end{defn}

Analogously we define \emph{minimal, vanishing at infinity, $\sigma$-complete pre-Rademacher system} and a \emph{Rademacher system} on an element $e > 0$.

Observe that
\begin{equation} \label{eq:defrads2}
\widehat{r}_i = \frac{e + r_i}{2}, \,\,\, e - \widehat{r}_i = \frac{e - r_i}{2},
\end{equation}
and $|r_i| = e$ for all $i \in I$.

\begin{thm} \label{th:ssssmdnd750}
Let $E$ be a Dedekind complete Riesz space. Then for any element $e > 0$ of $E$ and any infinite cardinal $\aleph_\alpha$ the following assertions are equivalent.
\begin{enumerate}
  \item There exists a complete Rademacher system $(r_i)_{i \in I}$ on $e$ of cardinality $|I| = \aleph_\alpha$.
  \item The ideal $A_e$ is Riesz isomorphic to $L_\infty(D^{\omega_\alpha}, \Sigma_{\omega_\alpha}, \mu_{\omega_\alpha})$. Moreover, there exists a Riesz isomorphism $T: A_e \to L_\infty(D^{\omega_\alpha}, \Sigma_{\omega_\alpha}, \mu_{\omega_\alpha})$ such that the restriction $T|_{\mathfrak{F}_e}: \mathfrak{F}_e \to \widehat{\Sigma}_{\omega_\alpha}$ is a Boolean isomorphism.
\end{enumerate}

In the case where $\alpha = 0$ one can replace $L_\infty(D^{\omega_0}, \Sigma_{\omega_0}, \mu_{\omega_0})$ with $L_\infty[0,1]$ and $\widehat{\Sigma}_{\omega_0}$ with $\widehat{\Sigma}$ in (2).
\end{thm}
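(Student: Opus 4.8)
The plan is to transfer the whole problem to the Boolean algebra $\mathfrak{F}_e$ of fragments of $e$ and to use the Boolean-algebraic machinery already developed, lifting a Boolean isomorphism to a Riesz isomorphism by means of Freudenthal's Spectral Theorem. The first observation is that, since $E$ is Dedekind complete, both the ideal $A_e$ and the Boolean algebra $\mathfrak{F}_e$ are Dedekind complete; in particular $A_e$ carries the order-unit norm $\|x\|_e = \inf\{\lambda > 0 : |x| \le \lambda e\}$, and being Dedekind complete with order unit $e$ it is uniformly complete, i.e.\ a Banach lattice (an $AM$-space with unit). The $e$-step functions are $\|\cdot\|_e$-dense in $A_e$ by Theorem~\ref{th:Frsssth}, and this density is the bridge we shall exploit.

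For $(1) \Rightarrow (2)$ I would argue as follows. A complete Rademacher system on $e$ is by definition built from a complete Rademacher family $\widehat{\mathcal{R}} = (\widehat r_i)_{i \in I}$ in $\mathfrak{F}_e$ with $|I| = \aleph_\alpha$ via $r_i = 2\widehat r_i - e$. The standard Rademacher family $(\overline r_\gamma)_{\gamma < \omega_\alpha}$ in $\widehat{\Sigma}_{\omega_\alpha}$ is likewise a complete Rademacher family of the same cardinality $\aleph_\alpha$ (Remark~\ref{rem:ddk}(1) gives that it is an atomless complete pre-Rademacher family, and one checks directly that it also vanishes at infinity and is minimal, since its coordinates are genuinely independent of positive measure). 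Both $\mathfrak{F}_e$ and $\widehat{\Sigma}_{\omega_\alpha}$ are Dedekind $\sigma$-complete and admit complete Rademacher systems of cardinality $\aleph_\alpha$, so Theorem~\ref{th:djjfh75mach} furnishes a Boolean isomorphism $\Phi : \mathfrak{F}_e \to \widehat{\Sigma}_{\omega_\alpha}$ carrying $\widehat{\mathcal{R}}$ onto the standard family. I would then lift $\Phi$ to a Riesz map $T$ by setting $T(e) = \mathbf{1}$ and
$$
T\Bigl(\bigsqcup_{k=1}^m a_k x_k\Bigr) = \sum_{k=1}^m a_k\,\chi_{\Phi(x_k)}
$$
on $e$-step functions (here $x_k \in \mathfrak{F}_e$ are pairwise disjoint and $\chi_F \in L_\infty$ denotes the characteristic function of $F \in \widehat{\Sigma}_{\omega_\alpha}$). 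Because $\Phi$ is a Boolean isomorphism it preserves disjointness and partitions, so $T$ is well defined, linear and a lattice homomorphism on step functions, and it is isometric for the order-unit norms: $\|\sum a_k x_k\|_e = \max_k |a_k| = \|\sum a_k \chi_{\Phi(x_k)}\|_\infty$, using positivity of the measure on $\widehat{\Sigma}_{\omega_\alpha}$ to guarantee $\mu_{\omega_\alpha}(\Phi(x_k)) > 0$ whenever $x_k \neq \mathbf{0}$. As $e$-step functions are $\|\cdot\|_e$-dense in the Banach lattice $A_e$ and simple functions are $\|\cdot\|_\infty$-dense in $L_\infty$, the isometry $T$ extends uniquely to a surjective Riesz isomorphism $T : A_e \to L_\infty(D^{\omega_\alpha}, \Sigma_{\omega_\alpha}, \mu_{\omega_\alpha})$ whose restriction to $\mathfrak{F}_e$ is $\Phi$, the required Boolean isomorphism onto $\widehat{\Sigma}_{\omega_\alpha}$.

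The converse $(2) \Rightarrow (1)$ is immediate: given such a $T$, the restriction $T|_{\mathfrak{F}_e}$ is a Boolean isomorphism onto $\widehat{\Sigma}_{\omega_\alpha}$, so pulling back the standard Rademacher family gives $\widehat r_i = (T|_{\mathfrak{F}_e})^{-1}(\overline r_{\gamma_i})$, and by Remark~\ref{rem:ddk}(2) (invariance of all the defining properties under Boolean isomorphism) $(\widehat r_i)_{i \in I}$ is a complete Rademacher family in $\mathfrak{F}_e$ of cardinality $\aleph_\alpha$; then $r_i = 2\widehat r_i - e$ is the sought complete Rademacher system on $e$. The final clause on $\alpha = 0$ follows because $D^{\omega_0} = \{-1,1\}^{\mathbb N}$ with its Haar (product) measure is measure-isomorphic to $[0,1]$ with Lebesgue measure, whence $L_\infty(D^{\omega_0}, \Sigma_{\omega_0}, \mu_{\omega_0}) \cong L_\infty[0,1]$ and $\widehat{\Sigma}_{\omega_0} \cong \widehat{\Sigma}$ as measure algebras.

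I expect the main obstacle to be the lifting step in $(1)\Rightarrow(2)$: verifying that $T$ is genuinely well defined on $e$-step functions (independence of the chosen representation), that it is simultaneously a lattice homomorphism and an order-unit isometry, and above all that the extension is \emph{onto} $L_\infty$. Surjectivity is exactly where Dedekind completeness of $E$ is used, since it is what makes $A_e$ uniformly complete and hence able to receive the completion of the dense image of the step functions; a merely non-complete $A_e$ could map onto a proper dense sublattice of $L_\infty$. It is worth stressing that $T$ is required only to be a Riesz isomorphism, not measure-preserving, which is precisely why the order-unit norms (which ignore the measure) suffice and no metric compatibility between the dyadic measure on $\mathfrak{F}_e$ and $\mu_{\omega_\alpha}$ needs to be arranged.
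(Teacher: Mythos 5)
Your argument is correct, and its skeleton coincides with the paper's: both proofs obtain a Boolean isomorphism $\varphi:\mathfrak{F}_e\to\widehat{\Sigma}_{\omega_\alpha}$ from Theorem~\ref{th:djjfh75mach} (after checking that the standard Rademacher family is a complete Rademacher family), define the operator on $e$-step functions by $\sum a_k x_k\mapsto\sum a_k\varphi(x_k)$, and then extend. The genuine difference is in the extension step. The paper works in the order structure: it verifies that $T_0$ is order continuous on the order dense sublattice $G$ of $e$-step functions, targets the laterally complete space $L_0(D^{\omega_\alpha},\Sigma_{\omega_\alpha},\mu_{\omega_\alpha})$, and invokes \cite[Theorem~7.20]{ABu} to extend by $T(x)=\sup\{T(y):y\in G,\ 0\le y\le x\}$, afterwards identifying the image as $L_\infty$. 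You instead work in the norm structure: $A_e$ with the order-unit norm is an $AM$-space with unit (uniformly complete because $E$ is Dedekind $\sigma$-complete), $T_0$ is an order-unit isometry because $\varphi$ sends nonzero fragments to sets of positive measure, and the extension is obtained by norm-density of step functions on both sides. Your route is more elementary in that it avoids verifying order continuity and avoids the lateral-completeness machinery, and it lands directly in $L_\infty$ rather than passing through $L_0$; the paper's route is the one that generalizes if one wants the extension on all of $A_e$ viewed inside $L_0$ without appealing to the order-unit norm. Two small points you should make explicit if you write this up: the well-definedness of $T_0$ on step functions (reduce two representations to a common disjoint refinement, using that $\varphi$ preserves finite infima and disjointness), and the fact that the norm-limit extension of a lattice homomorphism is again a lattice homomorphism (norm continuity of the lattice operations in a Banach lattice). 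Neither is a gap, just a detail both you and the paper leave to the reader.
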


\begin{proof}
(1) $\Rightarrow$ (2). By Theorem~\ref{th:djjfh75mach}, there is a Boolean isomorphism $\varphi: \mathfrak{F}_e \to \widehat{\Sigma}_{\omega_\alpha}$. Define a linear operator $T_0: G \to L_0(D^{\omega_\alpha}, \Sigma_{\omega_\alpha}, \mu_{\omega_\alpha})$, where $G$ is the Riesz subspace of $E$ of all $e$-step functions and $L_0(D^{\omega_\alpha}, \Sigma_{\omega_\alpha}, \mu_{\omega_\alpha})$ is the Riesz space of all equivalence classes of measurable functions from $D^{\omega_\alpha}$ to $\mathbb R$, by setting
$$
T_0 \Bigl( \sum_{k=1}^m a_k x_k \Bigr) = \sum_{k=1}^m a_k \varphi(x_k)
$$
for every $\sum_{k=1}^m a_k x_k \in G$. Observe that, $T_0$ is an injective lattice homomorphism, and that $T_0(G)$ coincides with the Riesz subspace of $L_0(D^{\omega_\alpha}, \Sigma_{\omega_\alpha}, \mu_{\omega_\alpha})$ of all $\mathbf{1}$-step functions, where $\mathbf{1}$ is the equivalence class containing the constant function $1$. We show that $T_0$ is order continuous. Indeed, let $(u_\lambda)$ be a net in $G$ with $u_\lambda \downarrow 0$. Our goal is to show that $\inf_\lambda T_0(u_\lambda) = 0$. If this were not true, there would exist $z \in L_0(D^{\omega_\alpha}, \Sigma_{\omega_\alpha}, \mu_{\omega_\alpha})$ with $0 < z \leq T_0(u_\lambda)$ for all $\lambda$. Obviously, the set of all simple functions is order dense in $L_0(D^{\omega_\alpha}, \Sigma_{\omega_\alpha}, \mu_{\omega_\alpha})$, and so, by the above, there is an element $y \in G$ with $0 < T_0(y) \leq z \leq T_0(u_\lambda)$ for all $\lambda$. Hence, $0 < y \leq u_\lambda$ for all $\lambda$, which contradicts the assumption $u_\lambda \downarrow 0$. By \cite[Theorem~4.3]{ABu}, $T_0$ is order continuous. Observe that, by Freudenthal's Spectral Theorem, $G$ is an order dense Riesz subspace of $A_e$. Since $L_0(D^{\omega_\alpha}, \Sigma_{\omega_\alpha}, \mu_{\omega_\alpha})$ is a laterally complete Riesz space\footnote{that is, every disjoint set of positive elements has a supremum}, by \cite[Theorem~7.20]{ABu}, the formula
\begin{equation} \label{eq:sdnbb}
T(x) = \sup \bigl\{ T(y): \,\, (y \in G) \, \& \, (0 \leq y \leq x) \bigr\}
\end{equation}
defines an extension $T: A_e \to L_0(D^{\omega_\alpha}, \Sigma_{\omega_\alpha}, \mu_{\omega_\alpha})$ which is an order continuous lattice homomorphism. By the order density of the set of all simple functions in $L_0(D^{\omega_\alpha}, \Sigma_{\omega_\alpha}, \mu_{\omega_\alpha})$ and \eqref{eq:sdnbb}, $T$ is injective. It remains to observe that $T(A_e) = L_\infty(D^{\omega_\alpha}, \Sigma_{\omega_\alpha}, \mu_{\omega_\alpha})$.

The implication (2) $\Rightarrow$ (1) is obvious.
\end{proof}

$\,\,\,\,\,\,$\textbf{Haar and other systems.} Let $E$ be a Riesz space, $0 < e \in E$ and $\mathcal{R} = (r_n)_{n=1}^\infty$ a Rademacher system on $e$. We define \emph{the Haar system generated by} $\mathcal{R}$ as follows. For every $k \in \mathbb N$ let $(\epsilon_{k,j})_{j=1}^\infty$ be the sequence of signs $\epsilon_{k,j} = \pm 1$ defined as follows. Let $k -1 = \sum_{j=1}^\infty a_{k,j} 2^j$ be the dyadic expansion with digits $a_{k,j} \in \{0,1\}$. Then set $\epsilon_{k,j} = 1 - 2 a_{k,j}$ for all $k,j \in \mathbb N$. Observe that for every fixed $n = 2,3, \ldots$ the sequences $(\epsilon_{k,j})_{j=1}^{n-1}$ for $k = 1, \ldots, 2^{n-1}$ are distinct and all possible sequences of signs of length $n-1$. Then we define $h_1 = e$, $h_2 = r_1$ and for every $n = 2,3, \ldots$ and every $k = 1, \ldots, 2^n$
$$
h_{2^n+k} = r_n \cdot \bigwedge_{j=1}^{n-1} a_{k,j} \frac{e + \epsilon_{k,j} r_j}{2}
$$
to be the \emph{Haar system} $(h_i)_{i=1}^\infty$ on $e$. We say that a Haar system is \emph{complete} if the corresponding Rademacher system is complete.

Let $F$ be a linear subspace of a Riesz space $E$. Following \cite{GKP}, a sequence $(x_n)_{n=1}^\infty$ in $F$ is called an \textit{order Schauder basis} of $F$ if for every $x \in F$ there exists a unique sequence of scalars $(a_n)_{n=1}^\infty$ such that
$$
\sum\limits_{k=1}^n a_k x_k \stackrel{\rm o}{\longrightarrow} x.
$$
A sequence $(x_n)_{n=1}^\infty$ in a Riesz space $E$ is called an \textit{order Schauder basic sequence} if it is an order Schauder basis of the sequential order closure of the linear span of $(x_n)_{n=1}^\infty$.

\begin{thm}
Let $(h_n)_{n=1}^\infty$ be a complete Haar system on an element $e$ of a Dedekind complete Riesz space $E$. Then $(h_n)_{n=1}^\infty$ is an order Schauder basis of the ideal $A_e$ generated by $e$.
\end{thm}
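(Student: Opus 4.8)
The plan is to transport the whole question to the concrete space $L_\infty[0,1]$, where the assertion becomes a classical fact about \emph{order} (not norm!) convergence of Haar expansions, and then to pull the conclusion back along a Riesz isomorphism. Since $E$ is Dedekind complete and the complete Rademacher system $\mathcal{R}=(r_n)_{n=1}^\infty$ underlying the Haar system is countable, Theorem~\ref{th:ssssmdnd750} in the case $\alpha=0$ supplies a Riesz isomorphism $T\colon A_e\to L_\infty[0,1]$ whose restriction $T|_{\mathfrak{F}_e}\colon\mathfrak{F}_e\to\widehat{\Sigma}$ is a Boolean isomorphism carrying $(\widehat r_n)$ onto the usual Rademacher family on $[0,1)$ (this is how $\varphi$ is built in Theorem~\ref{th:djjfh75mach}). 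Hence $T(r_n)=2T(\widehat r_n)-\mathbf{1}$ is the classical $\pm1$-valued Rademacher function $R_n$, and because the fragment multiplication $x\cdot y=x^+\wedge y^+ + x^-\wedge y^- - x^+\wedge y^- - x^-\wedge y^+$ together with the lattice operations defining $(h_n)$ are all preserved by the Riesz isomorphism $T$, the image $T(h_n)$ is precisely the corresponding classical (unnormalized) Haar function $H_n$ on $[0,1)$. As $T$ and $T^{-1}$ are both lattice isomorphisms they preserve order convergence in both directions, so $\sum_{k\leq N}a_kh_k\overset{\rm o}{\longrightarrow}x$ in $A_e$ if and only if $\sum_{k\leq N}a_kH_k\overset{\rm o}{\longrightarrow}Tx$ in $L_\infty[0,1]$, and the scalar sequence for $x$ corresponds bijectively to the one for $Tx$. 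It therefore suffices to prove that $(H_n)_{n=1}^\infty$ is an order Schauder basis of $L_\infty[0,1]$.

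For this reduced statement, fix $f\in L_\infty[0,1]$ and write $S_Nf=\sum_{k=1}^N a_k(f)H_k$ for the Haar partial sums with Haar--Fourier coefficients $a_k(f)$. The key observation is that $S_Nf=E[f\mid\mathcal G_N]$, where $\mathcal G_N$ is the finite algebra generated by the dyadic intervals reached by the first $N$ Haar functions, so that $\mathcal F_n\subseteq\mathcal G_N\subseteq\mathcal F_{n+1}$ for $2^n\leq N<2^{n+1}$, with $\mathcal F_n$ the level-$n$ dyadic algebra. The filtration $(\mathcal G_N)$ is increasing and $\bigvee_N\mathcal G_N$ is the full Borel $\sigma$-algebra, so the martingale convergence theorem (equivalently, Lebesgue differentiation) gives $S_Nf\to f$ almost everywhere, while $\|S_Nf\|_\infty\leq\|f\|_\infty$. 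I would then form the dominating sequence $u_N=\sup_{M\geq N}|S_Mf-f|$: each $u_N$ lies in $L_\infty[0,1]$ because $u_N\leq 2\|f\|_\infty$ and $L_\infty[0,1]$ is Dedekind complete, the $u_N$ are decreasing, and $\inf_N u_N=\limsup_M|S_Mf-f|=\mathbf{0}$ by the pointwise convergence. Thus $|S_Nf-f|\leq u_N\downarrow\mathbf{0}$, which is exactly $S_Nf\overset{\rm o}{\longrightarrow}f$, establishing existence of the expansion. Uniqueness follows because order convergence in $L_\infty[0,1]$ forces almost-everywhere convergence of the partial sums together with a common bound, so dominated convergence yields $L_2$-convergence, and orthogonality of the Haar system then forces the coefficients to be the Haar--Fourier coefficients of $f$; transporting back by $T^{-1}$ gives uniqueness for $x\in A_e$.

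The main obstacle, and the point I would stress, is the reduced statement about $L_\infty[0,1]$: the Haar system is \emph{not} a Schauder basis of $L_\infty$ in the norm sense, since the partial-sum operators are not uniformly bounded on $L_\infty$, so everything hinges on using \emph{order} convergence. The mechanism that makes order convergence succeed is precisely the conjunction of two facts that individually fail to give norm convergence: almost-everywhere convergence of the conditional expectations $E[f\mid\mathcal G_N]$, and the uniform bound $\|S_Nf\|_\infty\leq\|f\|_\infty$ that furnishes the order-dominating decreasing net $u_N\downarrow\mathbf{0}$. The only genuinely computational ingredient is verifying that the intermediate (non-dyadic-level) partial sums are themselves conditional expectations onto the algebras $\mathcal G_N$ and that these algebras generate the Borel $\sigma$-algebra; once that is in hand, the Dedekind completeness of $L_\infty[0,1]$ does the rest.
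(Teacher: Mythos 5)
Your proposal is correct and follows essentially the same route as the paper: both reduce the statement to the classical Haar system on $[0,1]$ by transporting it through the Riesz isomorphism $T\colon A_e\to L_\infty[0,1]$ supplied by Theorem~\ref{th:ssssmdnd750} (with $\alpha=0$), under which the abstract Haar system goes to the usual one. The only divergence is that the paper simply cites \cite{GKP} for the fact that the usual Haar system is an order Schauder basis of $L_\infty[0,1]$, whereas you prove that ingredient directly via the conditional-expectation/martingale argument --- identifying the partial sums $S_Nf$ with $E[f\mid\mathcal G_N]$, combining a.e.\ convergence with the uniform bound $\|S_Nf\|_\infty\leq\|f\|_\infty$ to produce the dominating sequence $u_N\downarrow 0$, and getting uniqueness through dominated convergence and $L_2$-orthogonality --- which is a sound, self-contained substitute for the citation.
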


\begin{proof}
By Theorem~\ref{th:ssssmdnd750}, there exists a Riesz isomorphism $T: A_e \to L_\infty[0,1]$ such that the restriction $T|_{\mathfrak{F}_e}: \mathfrak{F}_e \to \widehat{\Sigma}$ is a Boolean isomorphism which sends $(h_n)_{n=1}^\infty$ to the usual Haar system $(\widetilde{h}_n)_{n=1}^\infty$ on $[0,1]$. By \cite{GKP}, $(\widetilde{h}_n)_{n=1}^\infty$ is an order Schauder basis in $L_\infty[0,1]$, and hence, $(h_n)_{n=1}^\infty$ is an order Schauder basis of the ideal $A_e$.
\end{proof}

Note that a Haar system in a Riesz space need not be an order Schauder basic sequence. Indeed, the usual Haar system $(h_n)_{n=1}^\infty$ is not an order Schauder basis of $L_1[0,1]$ \cite{GKP}. Actually, $(h_n)_{n=1}^\infty$ is an order Schauder basis of the ideal $A_\mathbf{1}$ in $L_1[0,1]$ generated by the constant unit function $\mathbf{1}$, which coincides with $L_\infty[0,1]$. However, since the sequential order closure of $(h_n)_{n=1}^\infty$ in $L_1[0,1]$ is $L_1[0,1]$ itself, $(h_n)_{n=1}^\infty$ is not an order Schauder basic sequence.

Recall that an element $e > 0$ of a Riesz space $E$ is called an \emph{order unit} of $E$ if for every $x \in E$ there is a scalar $a > 0$ such that $|x| \leq a e$. In other words, $e$ is an order unit of $E$ provided $A_e = E$.

\begin{cor}
Let $E$ be a Riesz space with an order unit $e > 0$. Then every complete Haar system on $e$ is an order Schauder basis of $E$.
\end{cor}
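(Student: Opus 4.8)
The plan is to reduce the corollary to the preceding theorem by exploiting the fact that an order unit generates the whole space. The first and essentially only structural step is to observe that the hypothesis that $e$ is an order unit means, by the very definition recalled just above, that $A_e = E$: every $x \in E$ satisfies $|x| \leq a e$ for some scalar $a > 0$, and since $A_e$ is the ideal generated by $e$, namely $\{x \in E : |x| \leq a e \text{ for some } a > 0\}$, the two spaces coincide. Consequently the problem of expanding an arbitrary $x \in E$ in a complete Haar system on $e$ is literally the same as the problem already settled for $A_e$.

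With this identification in hand, I would simply invoke the preceding theorem, which states that a complete Haar system $(h_n)_{n=1}^{\infty}$ on $e$ is an order Schauder basis of the ideal $A_e$. Since $A_e = E$, this is exactly the assertion that $(h_n)_{n=1}^{\infty}$ is an order Schauder basis of $E$. As the corollary ranges over every complete Haar system on $e$ and the preceding theorem applies to each one individually, no extra bookkeeping is needed at this stage.

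The one place where care is required, and what I expect to be the genuine obstacle, is the Dedekind completeness hypothesis carried by the preceding theorem, which is not literally part of the corollary. I would handle this in two moves. First, the mere existence of a complete Haar system on $e$ already forces the fragment algebra to be large: such a system is built from a complete Rademacher family on $e$, so $\mathfrak{F}_e$ admits one, whence by Corollary~\ref{cor:mfhbbc} together with the remark following it $\mathfrak{F}_e = \mathcal{B}_\sigma(\mathcal{R})$ is measurable and therefore Dedekind complete. Second, to avoid assuming that $E$ itself is Dedekind complete, I would pass to the Dedekind completion $E^{\delta}$: here $e$ remains an order unit, and because $\mathfrak{F}_e$ is already a complete Boolean algebra it coincides with the fragment algebra of $e$ in $E^{\delta}$, so the given complete Haar system is still complete there. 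Applying the preceding theorem inside $E^{\delta}$ yields the order Schauder expansion, and it then remains to check that for $x \in E$ the order limits of the partial sums $\sum_{k=1}^{n} a_k h_k$ computed in $E^{\delta}$ actually lie in $E$ and witness order convergence already within $E$. Verifying this descent is the technical heart of the argument; once it is done, uniqueness of the coefficients transfers verbatim from $E^{\delta}$, and the proof is complete.
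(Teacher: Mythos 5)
Your core reduction is exactly the paper's (unstated) argument: by the definition recalled immediately before the corollary, $e$ being an order unit means $A_e=E$, so the preceding theorem, applied to the ideal $A_e$, gives the assertion verbatim. The paper offers nothing beyond this, so if the corollary is read as tacitly carrying the Dedekind completeness hypothesis of the theorem (which is clearly the intent), your first two paragraphs already constitute the whole proof.

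Your third paragraph responds to a genuine imprecision in the statement --- the corollary drops the Dedekind completeness assumption --- but the patch you sketch does not close. Two steps are unjustified. First, the claim that $\mathfrak{F}_e$, once known to be a Dedekind complete Boolean algebra, must coincide with the algebra of fragments of $e$ computed in the Dedekind completion $E^{\delta}$ does not follow: a Dedekind complete subalgebra of a larger Boolean algebra need not exhaust it, and a fragment of $e$ in $E^{\delta}$ need not be a supremum of fragments lying in $E$. Without that identification the given Haar system need not remain \emph{complete} when viewed in $E^{\delta}$, so the theorem cannot be invoked there. Second, the descent step --- producing a regulator $u_n \downarrow 0$ \emph{in} $E$ dominating $\bigl|\sum_{k=1}^{n} a_k h_k - x\bigr|$, given only a regulator in $E^{\delta}$ --- is exactly what you leave unverified, and it is not automatic: already in $L_\infty[0,1]$ the order convergence of Haar partial sums is not uniform, so the regulator cannot be taken of the form $\tfrac{1}{n}e$, and there is no evident reason it can be chosen inside a proper order dense sublattice. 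Since the paper itself makes no attempt at this generalization, the safe conclusion is to retain the Dedekind completeness hypothesis, under which your argument is complete and identical to the paper's.
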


\begin{prob} \label{pr}
Let $E$ be a Riesz space, $0 < e \in E$ and let $(h_n)_{n=1}^\infty$ be a complete Haar system on $e$. Characterize those Riesz subspaces $X$ of $B_e$ such that $(h_n)_{n=1}^\infty$ is an order Schauder basis of $E$.
\end{prob}

To confirm that Problem~\ref{pr} is too involved and interesting, note that for the case where $E = L_0[0,1]$, $e = \mathbf{1}$ and $(h_n)_{n=1}^\infty$ is the usual Haar system on $[0,1]$, a partial answer one can find in \cite{GKP}. More precisely, it is known that $(h_n)_{n=1}^\infty$ is an order Schauder basis for $L_p[0,1]$ with $1 < p \leq \infty$, and is not for $L_1[0,1]$, and the proofs of these results uses further results on Fourier series with respect to the Haar system. On the other hand, it is unknown of whether the Riesz space $L_1[0,1]$ has an order Schauder basis \cite{GKP}.

Using a complete Rademacher system on an element $e > 0$ of a Riesz space $E$, one can define some other classical systems, like Walsh \cite{PR} and Olevsky systems \cite{NS}.

\section{Integration in Riesz spaces with respect to a Rademacher system} \label{sec:int}

By an infinite disjoint sum $\bigsqcup_{n=1}^\infty x_n$ of elements $x_n$ of a Riesz space $E$ we understand the difference
$$
\bigsqcup_{n=1}^\infty x_n = \sup_n x_n^+ - \sup_n x_n^-
$$
if both suprema exist and $x_i \perp x_j$ for all $i \neq j$.

\begin{defn}
Let $E$ be a Riesz space, $0 < e \in E$. By an $e$-\emph{simple element} we understand any element $x \in E$ admitting an expansion
$$
x = \bigsqcup_{n=1}^\infty a_n x_n,
$$
where $a_n \in \mathbb R$, $x_n \in \mathfrak{F}_e$ for all $n \in \mathbb N$.
\end{defn}

Obviously, the set $\mathcal{S}_e$ of all $e$-simple elements is a sublattice of $E$.

Let $E$ be a Riesz space, $0 < e \in E$, $\widehat{\mathcal{R}} = (\widehat{r}_i)_{i \in I}$ be a complete Rademacher family in $\mathfrak{F}_e$ and $\mathcal{R} = (r_i)_{i \in I}$ be the Rademacher system on $e$ defined by \eqref{eq:defrads}. Let $\mu$ be the dyadic probability positive measure on $\mathfrak{F}_e$ generated by $\mathcal{R}$.

\begin{defn}
An $e$-simple element $x = \bigsqcup_{n=1}^\infty a_n x_n$, where $a_n \in \mathbb R$, $x_n \in \mathfrak{F}_e$ for all $n \in \mathbb N$, is called $\mathcal{R}$-\emph{integrable} if $\sum_{n=1}^\infty |a_n| \mu (x_n) < \infty$. In this case we define the integral of $x$ over $e$ with respect to $\mathcal{R}$ by setting
$$
\int_e x \, d \mu = \sum_{n=1}^\infty a_n \mu (x_n).
$$
\end{defn}

One can show that, if $x$ is an $\mathcal{R}$-integrable $e$-simple element then the above definition of the integral of $x$ does not depend on an expansion of $x$ as an infinite disjoint sum of scalar multiples by fragments of $e$.

The set of all $\mathcal{R}$-integrable $e$-simple elements we denote by $\mathcal{S}_e^1$. Using standard tools, one can prove the following properties of the above notion.

\begin{prop} \label{pr:masl}
\begin{enumerate}
  \item If $x \in \mathfrak{F}_e$ and $a \in \mathbb R$ then $ax \in \mathcal{S}_e^1$ and $\displaystyle{\int_e ax \, d \mu = a \mu(x)}$.
  \item If $x,y \in \mathcal{S}_e^1$ and $a,b \in \mathbb R$ then $ax+by \in \mathcal{S}_e^1$ and $$\int_e (ax+by) \, d \mu = a \int_e x \, d \mu + b \int_e y \, d \mu.$$
  \item If $x,y \in \mathcal{S}_e^1$ and $x \leq y$ then $\displaystyle{\int_e x \, d \mu \leq \int_e y \, d \mu}$.
  \item If $x \in \mathcal{S}_e$ and $|x| \leq \lambda e$ for some $\lambda \geq 0$ then $x \in \mathcal{S}_e^1$ and $\displaystyle{\Bigl| \int_e x \, d \mu \Bigr| \leq \lambda.}$
\end{enumerate}
\end{prop}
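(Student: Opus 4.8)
The plan is to dispatch (1), (3), and (4) quickly and to concentrate the real work on the additivity in (2). Throughout I use that $\mu$ is a positive countably additive probability measure on $\mathfrak{F}_e$, so $\mu(e)=1$ and disjoint fragments have additive measure, that $E$ (hence $\mathfrak{F}_e$) is Dedekind $\sigma$-complete so the countable suprema below exist, and the already-recorded fact that $\int_e x\,d\mu$ is independent of the chosen disjoint expansion of $x$. Item (1) is immediate: a fragment $x\in\mathfrak{F}_e$ is $e$-simple via the one-term expansion $x=\bigsqcup_n a_n x_n$ with $a_1 x_1=x$ and $a_n=0$ for $n\geq 2$, so $ax$ has the single-term expansion $ax$, giving $\sum_n|a_n|\mu(x_n)=|a|\mu(x)<\infty$ and $\int_e ax\,d\mu=a\mu(x)$.

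For (2) the key step is a common refinement. Writing $x=\bigsqcup_n a_n x_n$ and $y=\bigsqcup_m b_m y_m$ with all $a_n,b_m\neq 0$, I would form the countable, pairwise disjoint family of fragments consisting of the meets $x_n\wedge y_m$ together with the leftover pieces $x_n\setminus\bigsqcup_m y_m$ and $y_m\setminus\bigsqcup_n x_n$; these lie in $\mathfrak{F}_e$ because it is a Boolean algebra, and the relevant suprema exist by Dedekind $\sigma$-completeness. Over this common family both $x$ and $y$ acquire disjoint expansions, so $ax+by$ has a disjoint expansion carrying coefficient $a a_n+b b_m$ on each $x_n\wedge y_m$ and coefficients $a a_n$, $b b_m$ on the respective leftover pieces. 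Grouping terms, the sum of $|\text{coefficient}|\cdot\mu(\text{piece})$ over all refinement pieces is bounded by $|a|\sum_n|a_n|\mu(x_n)+|b|\sum_m|b_m|\mu(y_m)<\infty$, using countable additivity of $\mu$ and that the pieces cut from $x_n$ (resp.\ $y_m$) have measures summing to $\mu(x_n)$ (resp.\ $\mu(y_m)$); hence $ax+by\in\mathcal{S}_e^1$. Evaluating $\int_e(ax+by)\,d\mu$ on this expansion and regrouping the resulting absolutely convergent double series by countable additivity yields $a\int_e x\,d\mu+b\int_e y\,d\mu$, where independence of the expansion guarantees these agree with the original integrals.

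Items (3) and (4) then fall out. For (3) I would reduce to positivity of the integral on nonnegative elements: by (2), $y-x\in\mathcal{S}_e^1$ with $\int_e(y-x)\,d\mu=\int_e y\,d\mu-\int_e x\,d\mu$, so it suffices to check $\int_e z\,d\mu\geq 0$ whenever $0\leq z=\bigsqcup_n c_n z_n$. Applying the band projection $P_{z_n}$ onto the support $z_n$ kills the disjoint terms $c_m z_m$ for $m\neq n$, so $P_{z_n}(z)=c_n z_n$; since $z\geq 0$ this forces $c_n z_n\geq 0$, whence $c_n\geq 0$ (as $z_n>0$ and $E$ is Archimedean), and therefore $\int_e z\,d\mu=\sum_n c_n\mu(z_n)\geq 0$. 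For (4), from $|x|=\bigsqcup_n|a_n|x_n\leq\lambda e$ one projects onto each $x_n$ to get $|a_n|x_n=P_{x_n}(|x|)\leq P_{x_n}(\lambda e)=\lambda x_n$, hence $|a_n|\leq\lambda$; then $\sum_n|a_n|\mu(x_n)\leq\lambda\sum_n\mu(x_n)=\lambda\,\mu\bigl(\bigsqcup_n x_n\bigr)\leq\lambda\,\mu(e)=\lambda$, so $x\in\mathcal{S}_e^1$ and $\bigl|\int_e x\,d\mu\bigr|\leq\sum_n|a_n|\mu(x_n)\leq\lambda$.

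The main obstacle is the bookkeeping in (2): one must verify that the common refinement stays a countable disjoint family inside $\mathfrak{F}_e$, that the infinite disjoint sum $ax+by$ genuinely exists in $E$ (the relevant suprema being dominated by those defining $x$ and $y$), and that the rearrangement of the double series is legitimate — all of which rest on the absolute convergence granted by $\mathcal{R}$-integrability together with the countable additivity and positivity of $\mu$.
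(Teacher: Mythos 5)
The paper gives no proof of this proposition at all --- it is dispatched with the sentence ``Using standard tools, one can prove the following properties of the above notion'' --- so your argument cannot be compared with the author's; it must stand on its own, and it essentially does. Your handling of (1), (3) and (4) is correct: the band-projection computation $P_{z_n}(z)=c_nz_n$ (using order continuity of band projections and disjointness of the pieces) cleanly extracts the coefficients, giving positivity in (3) and, together with $P_{x_n}(\lambda e)=\lambda x_n$ (valid because $x_n\sqsubseteq e$), the bound $|a_n|\le\lambda$ in (4); the estimate $\sum_n\mu(x_n)\le\mu(e)=1$ then finishes (4). Two points should be made explicit. First, you invoke Dedekind $\sigma$-completeness and the principal projection property; the paper only writes ``from now on, assume in addition that $E$ is $\sigma$-Dedekind complete'' \emph{after} this proposition, but the abstract and the introduction make clear that the whole integration section is meant in that setting, so your standing assumption is the intended one and should simply be stated. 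Second, the real content of (2) is not the convergence estimate (which you handle correctly) but the identification of $ax+by$ with the disjoint sum over the common refinement, i.e.\ that $\sup_k(c_kp_k)^{+}-\sup_k(c_kp_k)^{-}$ equals $ax+by$. This needs a refinement lemma (if $w=\bigsqcup_k w_k$ and each $w_k=\bigsqcup_j w_{k,j}$ with all pieces pairwise disjoint, then $w=\bigsqcup_{k,j}w_{k,j}$, proved by iterating suprema of positive and negative parts) plus the observation that two disjoint sums over the same disjoint family add coefficientwise (checked by projecting onto each $B_{p_k}$). This is exactly the step hiding behind the paper's equally unproved claim that the integral is independent of the chosen expansion, which you also use; you should either prove that lemma once and cite it at the regrouping step, or carry out the refinement identity directly. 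With that supplied, the proof is complete.
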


The next important for the integration statement is a version of Freudenthal's Spectral Theorem~\ref{th:Frsssth}, which follows from Freudenthal's Spectral Theorem.

\begin{thm} \label{th:Freudlike}
Let $E$ be a $\sigma$-Dedekind complete Riesz space and $0 < e \in E$. Then for every $x \in B_e$ there exists a sequence $(u_n)_{n=1}^\infty$ of $e$-simple elements such that
\begin{equation} \label{eq:dddggs}
\forall n, \,\, 0 \leq x - u_n \leq \frac1n \, e \,\,\,\,\, \mbox{and} \,\,\,\,\, u_n \uparrow x.
\end{equation}
\end{thm}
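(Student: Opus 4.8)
The plan is to reduce to the case $x\ge 0$ and then manufacture, for each $n$, an $e$-simple element $u_n$ playing the role of the ``dyadic floor'' of $x$ relative to $e$. The reason an $e$-simple element (a countable disjoint sum) rather than an $e$-step function (a finite one) is forced is exactly that $B_e$ properly contains $A_e$: an element of $B_e$ may fail to be dominated by any multiple of $e$, so a uniform approximation within $\tfrac1n e$ needs infinitely many ``levels''. This is why Theorem~\ref{th:Frsssth}, which produces $e$-step functions for $x\in A_e$, does not apply verbatim and must be fed through truncation.

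First I would treat $x\in B_e^+$. Since $E$ is $\sigma$-Dedekind complete it has the principal projection property, so by \cite[Theorem~3.13]{ABu} one has $x\wedge ne\uparrow P_e(x)=x$ as $n\to\infty$, and each truncation $x\wedge ne$ lies in $A_e^+$, being $\leq ne$. Applying Freudenthal's Spectral Theorem~\ref{th:Frsssth} to these truncations (equivalently, using the principal projection property directly) furnishes the spectral components of $e$ along $x$: for each $k,n\in\mathbb N$ let $q_{k,n}\in\mathfrak F_e$ be the component of $e$ ``on which $x\ge\tfrac{k}{n}e$'', namely the band projection of $e$ onto the band generated by $(x-\tfrac{k}{n}e)^+$. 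Since $(x-\tfrac{k+1}{n}e)^+\le(x-\tfrac{k}{n}e)^+$ we get $q_{k+1,n}\le q_{k,n}$, so the fragments $q_{k,n}-q_{k+1,n}$ are pairwise disjoint, and I set
\begin{equation*}
u_n=\bigsqcup_{k\ge 1}\frac{k}{n}\,\bigl(q_{k,n}-q_{k+1,n}\bigr).
\end{equation*}
Then I would verify the three required properties. On the fragment $q_{k,n}-q_{k+1,n}$ one has $\tfrac{k}{n}e\le x<\tfrac{k+1}{n}e$, so $0\le x-u_n\le\tfrac1n e$; this uniform bound holds even where $x$ exceeds every integer multiple of $e$. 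The infinite disjoint sum defining $u_n$ exists in $E$ because its partial sums are dominated by $x\in B_e$ and $E$ is $\sigma$-Dedekind complete, so $u_n$ is a genuine $e$-simple element. Finally, as $E$ is Archimedean, $\tfrac1n e\downarrow 0$, and $0\le x-u_n\le\tfrac1n e$ forces $\sup_n u_n=x$; restricting to the dyadic scale $n=2^m$ makes the approximants increasing (finer dyadic floors refine monotonically), which gives $u_{2^m}\uparrow x$ and hence \eqref{eq:dddggs} after reindexing.

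For a general $x\in B_e$ I would split along the disjoint bands carrying $x^+$ and $x^-$. On the positive part I use the construction above to approximate $x^+$ from below within $\tfrac1n e$; on the negative part I approximate $x^-$ from above within $\tfrac1n e$, obtained from the lower dyadic approximant by adding $\tfrac1n$ times the relevant component of $e$, these upper dyadic approximants being decreasing in the dyadic scale. Combining the two $e$-simple pieces on their disjoint supports produces $u_n$ with $0\le x-u_n\le\tfrac1n e$ and, along the dyadic scale, $u_n\uparrow x$.

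The step I expect to be the main obstacle is the uniform estimate $x-u_n\le\tfrac1n e$ on the unbounded part of $x$ together with the well-definedness of the infinite disjoint sum: one must simultaneously control infinitely many spectral levels and know that their weighted disjoint sum converges in order. Both are handled by the domination $u_n\le x$ and by $\sigma$-Dedekind completeness, and this is precisely the point where passing from $A_e$ to $B_e$ genuinely uses $\sigma$-Dedekind completeness and forces the move from $e$-step functions to $e$-simple elements.
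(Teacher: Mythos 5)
Your argument is correct, and it takes a genuinely different route from the paper's. The paper first slices $x$ \emph{horizontally}: it sets $x_1 = P_{(e-x)^+}(x)$, $x_{m+1} = P_{((m+1)e-x)^+}(x) - P_{(me-x)^+}(x)$, so that $x = \bigsqcup_m x_m$ with each $x_m$ lying in the ideal $A_{e_m}$ generated by the fragment $e_m = P_{x_m}e$; it then invokes the classical Freudenthal theorem on each piece to get $e_m$-step functions $u_{m,n}$ and glues them into $u_n = \bigsqcup_m u_{m,n}$, the infinite disjoint sum existing by $\sigma$-Dedekind completeness. You instead build the approximant \emph{vertically} in one stroke, taking all spectral levels $q_{k,n} = P_{(x - \frac{k}{n}e)^+}e$ at once and forming a single countable disjoint sum $\bigsqcup_k \frac{k}{n}(q_{k,n}-q_{k+1,n})$; in effect you re-prove Freudenthal's theorem with countably many levels rather than reducing to the finite-level version. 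Both proofs rest on the same two pillars ($\sigma$-Dedekind completeness to form the infinite disjoint sums, the principal projection property for the band projections), and your version has the advantage of making the monotonicity along the dyadic scale and the treatment of $x^-$ completely explicit, which the paper leaves to the reader. One point you should spell out: for the uniform bound $x - u_n \leq \frac1n e$ you need the layers $q_{k,n}-q_{k+1,n}$ to exhaust $P_x e$, i.e.\ $\inf_k q_{k,n} = \mathbf{0}$; on a residual band where all $q_{k,n}$ survived, $u_n$ would vanish while $x$ dominates $\frac{k}{n}$ times that component of $e$ for every $k$. This is exactly where the Archimedean axiom (assumed throughout the paper) enters, not merely in the final step $\frac1n e \downarrow 0$, and it deserves a sentence.
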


Remark that the important difference from Theorem~\ref{th:Frsssth} is that one can approximate any element from the band generated by $e$ by a sequence of $e$-simple elements.

\begin{proof}[Sketch of proof]
With no loss of generality, we may and do assume that $x \geq 0$. We define a sequence $(x_m)_{m=1}^\infty$ by setting
$$
x_1 = P_{(e-x)^+} (x) \,\,\,\,\, \mbox{and} \,\,\,\,\, x_{m+1} = P_{((m+1)e-x)^+} (x) - P_{(me-x)^+} (x) \,\,\,\,\, \mbox{for} \,\,\,\,\, m = 1,2, \ldots.
$$

Observe that
\begin{equation} \label{eq:dddggs0}
x = \bigsqcup_{m=1}^\infty x_m.
\end{equation}

For every $m \in \mathbb N$, we set $e_m = P_{x_m} e$. Then $(e_m)_{m=1}^\infty$ is a disjoint sequence of fragments of $e$ and $x_m \in A_{e_m}$ for all $m \in \mathbb N$. Using Theorem~\ref{th:Frsssth}, for every $m \in \mathbb N$ we choose a sequence $(u_{m,n})_{n=1}^\infty$ of $e_m$-step functions with
\begin{equation} \label{eq:dddggs1}
\forall n, \,\, 0 \leq x_m - u_{m,n} \leq \frac1n \, e_m \,\,\,\,\, \mbox{and} \,\,\,\,\, u_{m,n} \uparrow x_m \,\,\, \mbox{as} \,\, n \to \infty.
\end{equation}

Since $x - e \leq u_{m,n} \leq x$ for all $n,m \in \mathbb N$ and $E$ is Dedekind $\sigma$-complete, for every $n \in \mathbb N$ there exists the following infinite disjoint sum
$$
u_n = \bigsqcup_{m=1}^\infty u_{m,n}.
$$
By \eqref{eq:dddggs0} and \eqref{eq:dddggs1}, the sequence $(u_n)_{n=1}^\infty$ has the desired properties.
\end{proof}

From now on, assume in addition, that $E$ is $\sigma$-Dedekind complete.

\begin{defn}
An element $x \in B_e$ is called $\mathcal{R}$-\emph{integrable} if there exists a sequence $(u_n)_{n=1}^\infty$ of integrable $e$-simple elements possessing \eqref{eq:dddggs}.
\end{defn}

\begin{prop} \label{pr:ksjdhwn1}
Let $x \in B_e$ be $\mathcal{R}$-integrable and let $(u_n)_{n=1}^\infty$ be a sequence of integrable $e$-simple elements possessing \eqref{eq:dddggs}. Then there exists the limit
\begin{equation} \label{eq:dddggs2}
\lim_{n \to \infty} \int_e u_n \, d \mu.
\end{equation}
\end{prop}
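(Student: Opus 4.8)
The plan is to show that the sequence of real numbers $a_n = \int_e u_n \, d\mu$ is nondecreasing and Cauchy, and therefore convergent. The only ingredients I need are the elementary properties of the integral on integrable $e$-simple elements collected in Proposition~\ref{pr:masl}, together with the two defining inequalities in \eqref{eq:dddggs}, namely $0 \le x - u_n \le \frac1n e$ and $u_n \uparrow x$.

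First I would record that $(a_n)$ is nondecreasing. Since $u_n \uparrow x$, in particular $u_n \le u_{n+1}$; as both $u_n$ and $u_{n+1}$ lie in $\mathcal{S}_e^1$, the monotonicity of the integral (Proposition~\ref{pr:masl}(3)) yields $a_n \le a_{n+1}$. The key step is then to control the increments. Fix $m > n$. The element $u_m - u_n$ belongs to $\mathcal{S}_e$, being a difference of two elements of the sublattice $\mathcal{S}_e$, and $0 \le u_m - u_n$ because $(u_n)$ is increasing. Moreover, using $u_m \le x$ and the right-hand inequality in \eqref{eq:dddggs},
$$
u_m - u_n \le x - u_n \le \frac1n \, e .
$$
Hence $|u_m - u_n| \le \frac1n e$, so by Proposition~\ref{pr:masl}(4) we have $u_m - u_n \in \mathcal{S}_e^1$ and $\bigl| \int_e (u_m - u_n)\, d\mu \bigr| \le \frac1n$. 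By linearity (Proposition~\ref{pr:masl}(2)),
$$
0 \le a_m - a_n = \int_e (u_m - u_n)\, d\mu \le \frac1n \qquad \text{for all } m > n .
$$
Consequently $(a_n)$ is Cauchy: given $\varepsilon > 0$, choosing $N$ with $1/N < \varepsilon$ gives $0 \le a_m - a_n \le 1/n \le 1/N < \varepsilon$ for all $m \ge n \ge N$. A nondecreasing Cauchy sequence of reals converges, which is exactly the asserted existence of the limit \eqref{eq:dddggs2}.

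There is essentially no hard part here; the whole argument reduces to applying the bound in Proposition~\ref{pr:masl}(4) to the tail increments $u_m - u_n$, whose uniform smallness is guaranteed by the estimate $x - u_n \le \frac1n e$ built into the definition of $\mathcal{R}$-integrability. The single point that deserves a line of care is the verification that $u_m - u_n$ is again an \emph{integrable} $e$-simple element before invoking linearity and the norm bound, and this follows at once since $\mathcal{S}_e$ is a sublattice and from Proposition~\ref{pr:masl}(4). I would note that at this stage the limit is attached to the pair $(x,(u_n))$; its independence of the approximating sequence $(u_n)$ (so that $\int_e x\, d\mu$ is well defined) is a separate matter, presumably treated in the next statement.
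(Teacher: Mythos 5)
Your proof is correct and follows essentially the same route as the paper: both show that the sequence of integrals is Cauchy by bounding $|u_m - u_n| \leq \frac1n e$ via the estimate $0 \leq x - u_k \leq \frac1k e$ and then applying Proposition~\ref{pr:masl}(4). Your version is in fact slightly more careful with the signs (the paper's displayed computation writes $|u_n - u_m| = u_n - u_m$ for $n < m$, which should be $u_m - u_n$ since $u_n \uparrow x$), and your explicit check that $u_m - u_n \in \mathcal{S}_e^1$ is a worthwhile addition.
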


\begin{proof}
We show that $\displaystyle{\int_e u_n \, d \mu}$, $n = 1,2, \ldots$ is a Cauchy sequence. Indeed, if $n < m$ then
$$
|u_n - u_m| = u_n - u_m = (x - u_m) - (x - u_n) \leq \frac1m \, e - 0 \leq \frac1n \, e.
$$

Hence, by (4) of Proposition~\ref{pr:masl},
$$
\Bigl| \int_e u_n \, d \mu - \int_e u_m \, d \mu \Bigr| = \Bigl| \int_e (u_n - u_m) \, d \mu \Bigr| \leq \frac1n \, .
$$
\end{proof}

The following proposition is proved by the well known scheme.

\begin{prop} \label{pr:ksjdhwn2}
Let $x \in B_e$ be $\mathcal{R}$-integrable. Then the limit \eqref{eq:dddggs2} does not depend on a sequence $(u_n)_{n=1}^\infty$ of integrable $e$-simple elements possessing \eqref{eq:dddggs}.
\end{prop}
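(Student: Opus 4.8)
The plan is to compare the two approximating sequences term by term and to show that their integrals differ by at most $1/n$, so that their (already existing) limits must coincide. Concretely, I would take two sequences $(u_n)_{n=1}^\infty$ and $(v_n)_{n=1}^\infty$ of integrable $e$-simple elements, each satisfying \eqref{eq:dddggs}, so that $0 \le x - u_n \le \frac1n e$ and $0 \le x - v_n \le \frac1n e$ for every $n$. By Proposition~\ref{pr:ksjdhwn1} both limits $\lim_n \int_e u_n \, d\mu$ and $\lim_n \int_e v_n \, d\mu$ exist, and the task reduces to proving that they are equal.

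The key step is a two-sided order estimate for $u_n - v_n$. Writing $u_n - v_n = (x - v_n) - (x - u_n)$ and using that each of $x - u_n$ and $x - v_n$ lies in the order interval $[0, \frac1n e]$, I would deduce $-\frac1n e \le u_n - v_n \le \frac1n e$, i.e.\ $|u_n - v_n| \le \frac1n e$. Since $\mathcal{S}_e$ is a sublattice (hence a linear subspace), $u_n - v_n$ is an $e$-simple element, and Proposition~\ref{pr:masl}(4) then yields both $u_n - v_n \in \mathcal{S}_e^1$ and
$$
\Bigl| \int_e (u_n - v_n) \, d\mu \Bigr| \le \frac1n.
$$
Invoking the linearity of the integral from Proposition~\ref{pr:masl}(2), this rewrites as
$$
\Bigl| \int_e u_n \, d\mu - \int_e v_n \, d\mu \Bigr| = \Bigl| \int_e (u_n - v_n) \, d\mu \Bigr| \le \frac1n.
$$

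Finally I would let $n \to \infty$: the right-hand side tends to $0$, while the left-hand side tends to the absolute value of the difference of the two limits, which are therefore equal. This is exactly the asserted independence. The proof is essentially routine; the only point requiring a moment of care is the estimate $|u_n - v_n| \le \frac1n e$, which genuinely needs that each of $u_n$ and $v_n$ \emph{underestimates} $x$ by no more than $\frac1n e$ --- the mere bounds $u_n \le x$ and $v_n \le x$ would not suffice --- whereas the monotonicity $u_n \uparrow x$ is used only indirectly, through Proposition~\ref{pr:ksjdhwn1}, to guarantee that the limits exist in the first place.
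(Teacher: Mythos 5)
Your proof is correct and is precisely the ``well known scheme'' the paper invokes without writing out: it mirrors the paper's own proof of Proposition~\ref{pr:ksjdhwn1}, replacing the estimate $|u_n-u_m|\leq\frac1n e$ for one sequence by $|u_n-v_n|\leq\frac1n e$ for two, and then applying Proposition~\ref{pr:masl}(2) and (4) in the same way. Your closing remark correctly identifies the one point of care, namely that the two-sided bound needs the full inequality $0\leq x-u_n\leq\frac1n e$ rather than just $u_n\leq x$.
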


\begin{defn}
Let $x \in B_e$ be $\mathcal{R}$-integrable. The limit \eqref{eq:dddggs2}, where $(u_n)_{n=1}^\infty$ is a sequence of integrable $e$-simple elements possessing \eqref{eq:dddggs}, is called the \emph{integral of} $x$ \emph{over} $e$ \emph{with respect to} $\mathcal{R}$ and denoted by
$$
\int_e x \, d\mu = \lim_{n \to \infty} \int_e u_n \, d \mu.
$$
\end{defn}

One can prove that the defined above integral possesses all the usual properties of the Lebesgue integral. In particular, the integrability of $x$ and $|x|$ are equivalent and one has
$$
\Bigl| \int_e x \, d \mu \Bigr| \leq \int_e |x| \, d \mu.
$$

The set of all $\mathcal{R}$-integrable functions we denote by $L_1(\mathcal{R})$. It is immediate that $L_1(\mathcal{R})$ is an ideal of $E$ and a normed lattice with respect to the norm
$$
\|x\| = \int_e |x| \, d \mu.
$$

However, $L_1(\mathcal{R})$ need not be a Banach space, as the following simple example shows. Let $E = L_\infty[0,1]$ and $e = \mathbf{1}$, the constant function equals $1$ a.e. If $\mathcal{R}$ is the usual Rademacher system on $[0,1)$ then $L_1(\mathcal{R}) = L_\infty[0,1]$, which is not a Banach space with respect to the $L_1$-norm.

Another example shows that $L_1(\mathcal{R})$ need not be a band in $E$. Indeed, let $E = L_0[0,1]$, $e = \mathbf{1}$ and let $\mathcal{R}$ be the usual Rademacher system on $[0,1)$. Then $L_1(\mathcal{R}) = L_1[0,1]$, which is not a band in $L_0[0,1]$ by the obvious reason.

Similarly, one can define the normed spaces $L_p(\mathcal{R})$ with $1 < p < \infty$, as well as Lorentz and Orlicz spaces using the integral with respect to $\mathcal{R}$.

\section{A partial answer to Problem~\ref{pr:main}} \label{prmainpartans}

The following theorem which is a consequence of the above results gives a partial answer to Problem~\ref{pr:main} in its part concerning Rademacher systems.

\begin{thm} \label{th:partans}
Let $(r_i)_{i \in I}$ be a system in a Riesz space $E$. Then the following assertions are equivalent.
\begin{enumerate}
  \item There exist a probability space $(\Omega, \Sigma, \mu)$, a Riesz subspace $F$ of $E$ containing $(r_i)_{i \in I}$ and a Riesz into isomorphism $T: F \to L_0(\mu)$ such that $(Tr_i)_{i \in I}$ is a system of independent $\pm 1$-valued random variables with the distribution $\mu\{Tr_i = \pm 1\} = 1/2$, $i \in I$;
  \item $(r_i)_{i \in I}$ possesses the following properties:
  \begin{enumerate}
    \item there exists $e \in E$, $e > 0$ such that $(\forall i \in I) \,\, |r_i| = e$;
    \item for every finite $J \subseteq I$ and every signs $\theta_j \in \{+, -\}$, $j \in J$ one has $\bigwedge_{j \in J} r_j^{\theta_j} > 0$;
    \item for every infinite $J \subseteq I$ and every signs $\theta_j \in \{+, -\}$, $j \in J$ one has $\bigwedge_{j \in J} r_j^{\theta_j} = 0$;
    \item for every $i_0 \in I$, $r_{i_0}^+$ does not belong to the smallest order closed subalgebra of $\mathfrak{F}_e$ containing all $r_i^+$ with $i \in I \setminus \{i_0\}$.
  \end{enumerate}
\end{enumerate}
\end{thm}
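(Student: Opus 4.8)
The plan is to recognize condition (2) as the statement that the family $\widehat{\mathcal R} = (r_i^+)_{i \in I}$ is a Rademacher family on $e$ in the Boolean algebra $\mathfrak F_e$, and then to read the equivalence off the measure-theoretic machinery of Sections~\ref{sec:Radfam}--\ref{sec:charhma}. First I would set $e := |r_{i_0}|$ and, under (a), put $\widehat r_i := r_i^+$; then $r_i^- = e - \widehat r_i$ and $r_i = 2\widehat r_i - e$ as in \eqref{eq:defrads}--\eqref{eq:defrads2}, and since for fragments of $e$ the lattice operations $\wedge,\vee$ coincide with the Boolean operations of $\mathfrak F_e$, each finite meet $\bigwedge_{j \in J} r_j^{\theta_j}$ is exactly the particle $\bigcap_{j \in J}\theta_j \widehat r_j$. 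Under this dictionary (b) is precisely (R1) (pre-Rademacher), (c) is precisely (R2) (vanishing at infinity), and (d) is precisely the minimality condition. Hence (2) holds if and only if $\widehat{\mathcal R}$ is a Rademacher family on $e$.

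For (2) $\Rightarrow$ (1) I would invoke Theorem~\ref{th:ccvmn740}: since $\widehat{\mathcal R}$ is a minimal pre-Rademacher family it generates a countably additive probability measure $\mu$ on $\mathcal B_\sigma(\widehat{\mathcal R})$ obeying \eqref{eq:main}, and since it vanishes at infinity this $\mu$ is positive. Representing $(\mathcal B_\sigma(\widehat{\mathcal R}),\mu)$ on a genuine probability space $(\Omega,\Sigma,\mu)$ --- exactly the Stone representation step carried out inside the proof of Theorem~\ref{th:ccvmn740} --- gives events $A_i$ with $\mu(A_i)=\tfrac12$. I would then take $F$ to be the Riesz subspace of $E$ generated by $(r_i)_{i\in I}$, which by Proposition~\ref{pr:semigen}(1) consists of the $e$-step functions whose fragments lie in the particle algebra $\mathcal B(\widehat{\mathcal R})$, and define $T$ on such a step function $\bigsqcup_k a_k p_k$ by $\sum_k a_k\chi_{A(p_k)}$, where $A(p_k)$ is the event corresponding to the particle $p_k$. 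Because every particle has strictly positive measure $2^{-n}$, the map $T$ is an injective lattice homomorphism, hence a Riesz into isomorphism, and $Tr_i = 2\chi_{A_i}-1$ takes values $\pm 1$. Finally \eqref{eq:main} reads $\mu\bigl(\bigcap_{k}\{Tr_{i_k}=\theta_k\}\bigr)=2^{-n}=\prod_k \mu\{Tr_{i_k}=\theta_k\}$, which is exactly the independence of $(Tr_i)_{i\in I}$ together with the prescribed distribution.

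For the converse (1) $\Rightarrow$ (2), properties (a) and (b) come out immediately: from $|Tr_i|=\mathbf 1$ and injectivity of the Riesz embedding $T$ one gets $|r_i|=|r_{i_0}|=:e>0$ for all $i$, while applying $T$ to a finite meet and using independence gives $T\bigl(\bigwedge_{j\in J}r_j^{\theta_j}\bigr)=\chi_{\bigcap_j\{Tr_j=\theta_j\}}$ with $\mu\bigl(\bigcap_j\{Tr_j=\theta_j\}\bigr)=2^{-|J|}>0$, so the meet is nonzero. The real work is transferring the two order-theoretic conditions (c) and (d) back across $T$, and this is where I expect the main obstacle to lie: $T$ is only an embedding of the subspace $F$ and is not assumed order continuous, so infinite infima and order closures need not pass through it directly. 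The device I would use is that the relevant infinite data is approximated from inside $F$ by the decreasing sequence of finite particles $p_n=\bigcap_{k\le n}\theta_{j_k}\widehat r_{j_k}\in\mathcal B(\widehat{\mathcal R})\subseteq F$, whose images satisfy $\mu(Tp_n)=2^{-n}\to 0$, so that any common lower bound of $\{r_j^{\theta_j}\}$ is squeezed into a null set and must vanish, giving (c); and the independence of $Tr_{i_0}$ from $(Tr_i)_{i\ne i_0}$ forces $r_{i_0}^+=\tfrac{e+r_{i_0}}{2}$ to stay outside $\mathcal B_\tau\bigl((r_i^+)_{i\ne i_0}\bigr)$, which is (d). Making these squeezing and measure arguments rigorous --- in particular pinning down how a common lower bound of the $p_n$ lands inside the null set $\bigcap_n Tp_n$ even though it need not itself lie in $F$ --- is the crux; once (c) and (d) are secured the equivalence is complete.
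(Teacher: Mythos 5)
Your reduction of (2) to the statement that $(r_i^+)_{i\in I}$ is a Rademacher family on $e$ in $\mathfrak{F}_e$, and your proof of (2)~$\Rightarrow$~(1) --- Theorem~\ref{th:ccvmn740} to get the positive dyadic measure on $\mathcal{B}_\sigma(\mathcal{R})$, the Stone representation to realize it on a genuine probability space, $F$ taken to be the span of the particle step functions, and $T$ defined on $\bigsqcup_k a_k p_k$ by sending particles to indicators of the corresponding events --- is exactly the paper's argument for that half; nothing to add there.

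The gap you flag in (1)~$\Rightarrow$~(2), and honestly leave open, is genuine, and your squeezing device cannot close it. A lower bound $z$ of $\{r_j^{\theta_j}:j\in J\}$ lives in $E$, not in $F$; since $T$ is only defined on $F$ and carries no information about elements outside it, the particles $p_n\in F$ with $\mu(Tp_n)=2^{-n}$ exert no control over $z$. (The paper dismisses this direction as obvious, with the remark that the properties ``are preserved by a Riesz isomorphism''; the same objection applies to that remark, since (c) and (d) are statements about infima and order closures in $E$ and $\mathfrak{F}_e$, not in $F$.) The obstruction is not merely technical: let $X=[0,1]\sqcup\{*\}$, let $E$ be the Riesz space of all functions on $X$ whose restriction to $[0,1]$ is a dyadic step function, set $r_i(t)=\rho_i(t)$ for $t\in[0,1]$ and $r_i(*)=1$ (with $\rho_i$ the classical Rademacher functions), let $F$ be the Riesz subspace generated by the $r_i$ and $T$ the restriction map to $[0,1]$. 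Every $f\in F$ is a lattice--linear expression in finitely many $r_{i_1},\dots,r_{i_n}$ evaluated pointwise, so $f(*)$ equals the value of $f$ on the positive-measure particle where $\rho_{i_1}=\dots=\rho_{i_n}=1$; hence $T$ is injective and (1) holds with $Tr_i=\rho_i$. Yet $\mathbf{1}_{\{*\}}\in E$ is a strictly positive lower bound (indeed a fragment of $e=\mathbf{1}_X$) for all $r_i^+$, so (c) fails. Thus (1)~$\Rightarrow$~(2) cannot be proved as stated; one needs either an additional hypothesis (e.g.\ that $F$ is order dense in $[0,e]$, which does hold for the $F$ produced in the (2)~$\Rightarrow$~(1) direction) or a reading of (c) and (d) with the infima and the order closure computed inside $F$. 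You were right to identify this as the crux, but the proposal does not resolve it, and in the stated generality it is not resolvable.
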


\begin{proof}
The implication (1) $\Rightarrow$ (2) is obvious (properties (a)-(d) are clear for a system of independent $\pm 1$-valued random variables with the distribution $\mu\{Tr_i = \pm 1\} = 1/2$, $i \in I$, and these properties are preserved by a Riesz isomorphism.

(2) $\Rightarrow$ (1). By (2), $\mathcal{R} = (r_i^+)_{i \in I}$ is a Rademacher system on $e$ in the Boolean algebra $\mathfrak{F}_e$. By Theorem~\ref{th:ccvmn740}, there exists a unique positive countably additive probability measure $\mu'$ (called the dyadic measure) on the smallest $\sigma$-complete subalgebra $\mathcal{B}_\sigma(\mathcal{R})$ of $\mathfrak{F}_e$ satisfying \eqref{eq:main}. Let $(\Omega, \Sigma)$ be a measurable space with $\Sigma$ Boolean isomorphic to $\mathcal{B}_\sigma(\mathcal{R})$ by means of a Boolean isomorphism $\varphi: \mathcal{B}_\sigma(\mathcal{R}) \to \Sigma$ (see the Stone Theorem, \cite[Theorem~7.11]{Jec}). Let $\mu$ be the measure on $\Sigma$ defined by $\mu(A) = \mu' \bigl( \varphi^{-1}(A) \bigr)$ for all $A \in \Sigma$. Now we show that $(\Omega, \Sigma, \mu)$ is the desired probability measure space. Observe that the system $(\widetilde{r}_i)_{i \in I}$ in $L_0(\mu)$ defined by
$$
\widetilde{r}_i = \mathbf{1}_{\varphi(r_i^+)} - \mathbf{1}_{\varphi(r_i^-)}, \,\,\, i \in I
$$
is a system of independent $\pm 1$-valued random variables with the distribution $\mu\{\widetilde{r}_i = \pm 1\} = 1/2$. It remains to construct a Riesz subspace $F$ of $E$ containing $(r_i)_{i \in I}$ and a Riesz into isomorphism $T: F \to L_0(\mu)$ with $Tr_i = \widetilde{r}_i$ for all $i \in I$. Let $\overline{\mathcal{P}}$ be the particle semialgebra of $\mathfrak{F}_e$ with respect to $(r_i^+)_{i \in I}$. Denote by $F$ the set of all disjoint sums $\bigsqcup_{k=1}^m a_k x_k$, where $m \in \mathbb N$, $a_k \in \mathbb R$ and $x_k \in \overline{\mathcal{P}}$. Then $F$ is a Riesz subspace of $E$ containing $(r_i)_{i \in I}$. We set
$$
T \Bigl( \bigsqcup_{k=1}^m a_k x_k \Bigr) = \bigsqcup_{k=1}^m a_k \mathbf{1}_{\varphi(x_k)} \in L_0(\mu)
$$
for all $\bigsqcup_{k=1}^m a_k x_k \in F$ ($T$ sends a disjoint sum from $F$ to a disjoint sum in $L_0(\mu)$ because $\varphi$ is a Boolean isomorphism). To prove that $T$ has the desired properties is an easy technical exercise.
\end{proof}



\end{document}